\renewcommand{\epsilon}{\varepsilon}
\renewcommand{\phi}{\varphi}
\DeclareMathOperator{\OO}{O}
\newcommand{\RR}{{\mathbb{R}}}
\newcommand{\Span}{\operatorname{{Span}}}
\newcommand{\rank}{\operatorname{rank}}
\DeclareMathOperator{\E}{\mathbb{E}}
\newcommand{\tra}[1]{{#1}^{\mathrm{t}}}
\newcommand{\dd}{{\mathrm{d}}}            
\newcommand{\Id}{{\mathrm{Id}}}
\renewcommand{\Re}{{\mathfrak{Re}}}
\renewcommand{\Im}{{\mathfrak{Im}}}
\newtheorem{thm}{Theorem}[section]
\newtheorem*{lem*}{Lemma}
\newtheorem{prop}[thm]{Proposition}
\theoremstyle{definition}
\newtheorem{defn}[thm]{Definition}
\theoremstyle{remark}
\newtheorem*{rem}{Remark}
\newtheorem*{thm*}{Theorem}
\numberwithin{equation}{section}
\date{}
\begin{document}
\title[A unitary extension of virtual permutations]
{A unitary extension of virtual permutations}

\author{P. Bourgade, J. Najnudel, A. Nikeghbali
}

\date\today

\begin{abstract}
Analogously to the space of virtual permutations
\cite{Kerov}, we define projective limits of isometries: these
sequences of unitary operators are natural in the sense that they minimize the rank norm between successive matrices of increasing sizes. The space of virtual isometries we construct this way may be viewed as a natural extension of the space of virtual permutations of Kerov, Olshanski and Vershik  (\cite{Kerov}) as well as the space of virtual isometries of Neretin (\cite{Ner}).
We then derive with purely probabilistic methods  an almost sure convergence for these random matrices under the Haar measure:  for
a coherent Haar measure on virtual isometries,  the smallest normalized eigenangles converge a.s. to a point process
whose correlation function is given by the sine kernel.
This almost sure convergence actually holds for a larger class
of measures as is proved by Borodin and Olshanski (\cite{BO}). We give a different proof, probabilist
 in the sense that it makes use of martingale arguments and shows how the eigenangles interlace when going from dimension $n$ to $n+1$.
Our method also proves that for some universal constant $\epsilon> 0$, the rate of convergence is almost surely dominated by $n^{-\epsilon}$ when the dimension $n$ goes to infinity.
 \end{abstract}

\maketitle

\section{Introduction}

In \cite{Kerov}, the virtual permutations have been introduced in order to study some
asymptotic properties of the symmetric group $\mathcal{S}_n$ of order $n \in \mathbb{N}$, when $n$ goes to infinity.
The space $\mathcal{S}^{\infty}$ of virtual permutation can be defined as follows. For $n \geq m \geq 1$, $\pi_{n,m}$ denotes the application
from $\mathcal{S}_n$ to $\mathcal{S}_m$ such that for $\sigma \in \mathcal{S}_n$, $\pi_{n,m}(\sigma)
\in \mathcal{S}_m$ is obtained from $\sigma$ by deleting all the elements of $\{m+1,\dots,n\}$ from its cycle structure;
then $\mathcal{S}^{\infty}$ is the projective limit of $(\mathcal{S}_n)_{n \geq 1}$, i.e. the set of sequences $(\sigma_n)_{n \geq 1}$
of permutations such that for $n \geq 1$, $\sigma_n \in \mathcal{S}_n$ and for $n \geq m \geq 1$, $\sigma_m = \pi_{n,m}(\sigma_n)$.
A virtual permutation $(\sigma_n)_{n \geq 1}$ can naturally be constructed by the so-called
{\it Chinese restaurant process} (see e.g. \cite{Pitman}), as follows:
\begin{itemize}
\item $\sigma_1$ is the unique permutation in $\mathcal{S}_1$;
\item for $n \geq 1$, $\sigma_{n+1}$ is obtained from $\sigma_n$ either by adding $n+1$ as a fixed point,
or by inserting $n+1$ inside a cycle of $\sigma_n$.
\end{itemize}
If the space $\mathcal{S}^{\infty}$ is endowed with the $\sigma$-algebra generated by the coordinates
$(\sigma_n)_{n \geq 1}$, and if $(\mu_n)_{n \geq 1}$ is a sequence of probability measures, $\mu_n$ defined on 
$\mathcal{S}_n$, such that for all $n \geq 1$, $\mu_n$ is the image of $\mu_{n+1}$ by $\pi_{n+1, n}$, then there exists 
a unique probability measure on  $\mathcal{S}^{\infty}$ such that its $n$-th projection is equal to $\mu_n$ for all $n \geq 1$
(this result can be deduced from the Carath\'eodory theorem on extensions of measures). Among all the probability measures on 
$\mathcal{S}^{\infty}$, those which are invariant by conjugation (i.e. for all $n \geq 1$, their projection on $\mathcal{S}_n$ 
is invariant by conjugation with any element of $\mathcal{S}_n$) are called {\it central measures} and they have been 
studied in detail by Tsilevich (see \cite{Tsi} and \cite{Tsi98}). The main result is the following: 
there exists a family $(\mu^x)_{x \in \Sigma}$ of particular central measures (called {\it ergodic measures}), indexed by 
the set 
$$\Sigma = \{ x = (x_1, x_2, \dots, x_k, \dots), \; x_1 \geq x_2 \geq \dots \geq x_k \geq \dots \geq 0, \; x_1 + x_2 + \dots + x_k + \dots \leq 1\},$$
 such that every central measure $\mu$ can be written as follows: 
\begin{equation}
\mu = \int_{\Sigma} \mu^x d \nu(x), \label{nu}
\end{equation}
where $\nu$ is a probability measure on $\Sigma$. Now, if $x \in \Sigma$ is fixed, and 
if $(\sigma_n)_{n \geq 1}$ is a virtual permutation following the ergodic probability distribution $\mu^x$, then its global cycle structure 
determines a random partition of the set of positive integers, and by the general results of Kingman (see \cite{King75}, 
\cite{King78a}, \cite{King78b}), the following properties hold:
\begin{itemize}
\item The sets of the partition are either  singletons (corresponding to fixed points of permutations)
or have a strictly positive asymptotic density.
\item For all $k \geq 1$, the $k$-th largest cycle length of $\sigma_n$ (zero if $\sigma_n$ has less than $k$ cycles), divided by 
$n$, tends almost surely to $x_k$ when $n$ goes to infinity.
\end{itemize}
\noindent
This property of convergence of the cycle lengths can easily be translated into an almost sure
convergence of the point process of the eigenangles of the corresponding permutation matrices, if 
the angles associated to the permutation $\sigma_n$ are multiplied by a factor $n$. 
In this way, one obtains a deterministic limiting point process, containing the multiples of 
$2 \pi/ x_k$ for all $k \geq 1$. This property of almost sure convergence can be extended to the 
general case of central measures: in this case, the limiting point process is random, and 
the sequence $(x_k)_{k \geq 1}$ of asymptotic cycle lengths follows the distribution $\nu$ defined 
by \eqref{nu}. An interesting family of central measures is obtained by taking, for a given 
parameter $\theta \in \mathbb{R}_+^*$, $\nu$ equal to the law of a Poisson-Dirichlet process 
of parameter $\theta$. In this case, $\mu$ is the so-called {\it Ewens measure of parameter $\theta$}, i.e. the unique measure under which for all $n \geq 1$,
the coordinate $\sigma_n$ satisfies the following: for all $\sigma \in \mathcal{S}_n$,
$$\mathbb{P}_{\mu} [\sigma_n= \sigma] = \frac{\theta^{k_{\sigma}}}{\theta(\theta+1)\dots(\theta+n-1)},$$
where $k_{\sigma}$ denotes the number of cycles of $\sigma$. In particular, for $\theta =1$, $\mu$ is the 
{\it Haar measure} on $\mathcal{S}^{\infty}$, i.e. the unique measure 
such that for all $n \geq 1$, the $n$-th projection $\sigma_n$ follows the uniform measure on $\mathcal{S}_n$ under $\mu$. 
A random virtual permutation following the Ewens measure of parameter $\theta$ can be constructed by the Chinese
 restaurant process in a convenient way: conditionally
on $\sigma_n$, $n+1$ is a fixed point of $\sigma_{n+1}$ with probability $\theta/(\theta + n)$, otherwise,
it is inserted inside the cycle structure of $\sigma_n$,
each of the $n$ possible places having the same probability $1/(\theta+n)$.

 A similar study has been made by Olshanski and Vershik \cite{OV}, for the space $\mathcal{H}$ of 
 the infinite-dimensional hermitian matrices, i.e. 
 the families $(M_{ij})_{i,j \geq 1}$ of complex numbers such that 
 $M_{ij} = \overline{M_{ji}}$ for all $i, j \geq 1$. A {\it central measure} on $\mathcal{H}$, endowed with the $\sigma$-algebra 
 generated by the coordinates $M_{ij}$, $i, j \geq 1$, is defined as a probability measure $\mu$ such that 
 for all $n \geq 1$, the image of $\mu$ by the projection $(M_{ij})_{i,j \geq 1} \mapsto (M_{ij})_{1 \leq i, j \leq n}$ from $\mathcal{H}$ to 
 the space of $n \times n$ hermitian matrices is invariant by conjugation with any $n \times n$ unitary matrix. In \cite{OV}, it is proven that 
 there exists a family $(\mu^x)_{x \in \Delta}$ of particular measures, again called {\it ergodic measures}, indexed by 
the set 
\begin{align*}
\Delta = \{ & x = (\gamma_1, \gamma_2, x^+_1, x^+_2, \dots, x^+_k, \dots, x^-_1, x^-_2, \dots, x^-_k, \dots); 
\gamma_1 \in \mathbb{R}, \gamma_2 \in \mathbb{R}_+, \\ & \; x^+_1 \geq x^+_2 \geq \dots \geq x^+_k \geq \dots \geq 0, \, 
 x^-_1 \geq x^-_2 \geq \dots \geq x^-_k \geq \dots \geq 0, \\ & (x^+_1)^2 + (x^+_2)^2 + \dots + (x^+_k)^2 + \dots + 
 (x^-_1)^2 + (x^-_2)^2 + \dots + (x^-_k)^2 + \dots  < \infty\},
 \end{align*}
 such that every central measure $\mu$ can be written as follows: 
\begin{equation}
\mu = \int_{\Delta} \mu^x d \nu(x), \label{nu2}
\end{equation}
where $\nu$ is a probability measure on $\Delta$. Moreover, for all 
$$x = (\gamma_1, \gamma_2, x^+_1, x^+_2, \dots, x^+_k, \dots, x^-_1, x^-_2, \dots, x^-_k, \dots) \in \Delta,$$ the ergodic measure 
$\mu^x$ can be characterized by its Fourier transform: for all $n \geq 1$ and for any $n \times n$ hermitian matrix $A$, 
$$\int_{\mathcal{H}} e^{i \operatorname{Tr} [A \, (M_{ij})_{1 \leq i,j \leq n}]} d \mu^x( M) = 
e^{ \, i \gamma_1\operatorname{Tr}(A) \,- \, \gamma_2 \operatorname{Tr}(A^2)/2} \operatorname{det} \left[
\left( \prod_{k=1}^{\infty} \frac{e^{-i x^+_kA}}{1-i x^+_k A} \right) \left( \prod_{k=1}^{\infty} \frac{e^{i x^-_kA}}{1+i x^-_k A} \right) \right]$$
(in this paper, the multiples of identity matrices will sometimes be denoted by complex numbers, and the multiplications by inverse of matrices 
can be denoted as quotients, when there is no problem of commutativity). 
Moreover, if $M$ is a random element of $\mathcal{H}$ following the distribution $\mu^x$, if
 $\lambda_1^+(n) \geq \lambda_2^+(n) \geq \dots \geq \lambda_k^+ (n) \geq \dots \geq 0$ denotes the sequence of positive 
 eigenvalues of the hermitian matrix $(M_{ij})_{1 \leq i, j \leq n}$ and if $\lambda_1^-(n) \leq \lambda_2^-(n) \leq \dots \leq \lambda_k^- (n) \leq \dots
 \leq 0$ is the sequence of negative eigenvalues, both sequences being completed by zeros, then the following properties hold almost surely:
 \begin{itemize}
 \item For fixed $k \geq 1$ and $n$ going to infinity, 
 $$\frac{\lambda^+_k(n)}{n} \longrightarrow x^+_k \; \operatorname{and} \; \frac{\lambda^-_k(n)}{n} \longrightarrow -x^-_k.$$
 \item For $n$ going to infinity, 
 $$\frac{1}{n}\sum_{k=1}^{\infty} (\lambda^+_k(n) + \lambda^-_k(n)) \longrightarrow \gamma_1$$
 and 
 $$\frac{1}{n^2} \sum_{k=1}^{\infty} ((\lambda^+_k(n))^2 + (\lambda^-_k(n))^2) \longrightarrow \gamma_2 + \sum_{k=1}^{\infty} [(x^+_k)^2 + (x^-_k)^2].$$
 \end{itemize}
\noindent
Similarly as in the setting of virtual permutations, the first property implies an almost sure convergence for the renormalized extreme eigenvalues of the left-upper blocks of any element 
in $\mathcal{H}$ following a central measure. In \cite{BO}, Borodin and Olshanski construct a remarkable family of central measures, called {\it Hua-Pickrell measures} and indexed 
by a complex parameter $\delta$ whose real part is strictly larger than $-1/2$. The Hua-Pickrell measure $m^{(\delta)}$ of parameter $\delta$ is defined as the unique 
probability measure such that for all $n \geq 1$, the projection $m^{(\delta,n)}$ of $m^{(\delta)}$ on the space of $n \times n$ hermitian matrices satisfies:
$$m^{(\delta,n)} (dM) = c_{\delta,n} \operatorname{det} ((1+iM)^{-\delta-n}) \operatorname{det} ((1-iM)^{-\bar{\delta}-n}) 
\prod_{1 \leq j \leq k \leq n} d \Re (M_{jk}) \prod_{1 \leq j < k \leq n}  d \Im (M_{jk}) $$
where $c_{\delta,n}$ is a normalization constant. As stated above, the measure $m^{(\delta)}$ can be expressed by 
equation \eqref{nu2}. Moreover, Borodin and Olshanski have  
proven that under $\nu$, $\gamma_2 = 0$ almost surely, and that the point process
 $\{x^+_1, x^+_2, \dots, x^+_k, \dots, -x^-_1, -x^-_2, \dots, -x^-_k, \dots\}$ is a determinantal process whose kernel is explicitly expressed in terms of confluent 
 hypergeometric functions. 
 
 The similarity between the setting of virtual permutations and the setting of infinite hermitian matrices becomes clearer when one replaces 
 hermitian matrices by unitary matrices, via Cayley transform. More precisely, for all $n \geq 1$, the map
 $$\mathcal{C}_n : M \mapsto \frac{M-i}{M+i}$$
 defines a bijection from the set of $n \times n$ hermitian matrices to the set $V(n)$ of $n \times n$ unitary matrices which do not have  $1$ as an eigenvalue, 
 and the inverse bijection is given by
 $$\mathcal{C}_n^{-1} : u \mapsto i \frac{1+u}{1-u}.$$
For $n \geq m \geq 1$, one defines a natural projection $p_{n,m}$ from the space of $n \times n$ hermitian matrices to the
 space of $m \times m$ hermitian matrices, simply by taking the left upper block. This projection $p_{n,m}$ induces a map
 $\tilde{\pi}_{n,m}$ from $V(n)$ to $V(m)$, given by
 $$\tilde{\pi}_{n,m} = \mathcal{C}_m \circ p_{n,m} \circ \mathcal{C}_n^{-1},$$
 and it is immediate to check that $\tilde{\pi}_{n,p} = \tilde{\pi}_{m,p} \circ \tilde{\pi}_{n,m}$ for $n \geq m \geq p \geq 1$. 
Moreover, Neretin \cite{Ner} has computed explicitly the projection $\tilde{\pi}_{n,m}$: if a matrix 
$$u =\left( \begin{array}{cc}
A & B  \\
C & D   \end{array} \right) \in V(n)$$
 is divided into four blocks 
of size $m \times m$, $m \times (n-m)$, $(n-m) \times m$ and $(n-m) \times (n-m)$, then $1-D$ is invertible and 
$$\tilde{\pi}_{n,m}(u) = A + B (1-D)^{-1}C \in V(m).\footnote[1]{In fact, the projections defined here are not exactly those which are given in \cite{Ner}: we have reversed the order of 
the lines and the columns, and we have changed the matrices to their opposite. These slight changes are made for the coherence of the present paper.} $$
 Then, one can define the {\it virtual isometries} (or virtual rotations) as the sequences $(u_n)_{n \geq 1}$ of unitary matrices, such that $u_n \in V(n)$
for all $n \geq 1$ and $u_m = \tilde{\pi}_{n,m} (u_n)$ for all $n \geq m \geq 1$. An equivalent condition is the following: there exists an infinite 
hermitian matrix $M$ such that for all $n \geq 1$, 
$$u_n = \mathcal{C}_n ( (M_{ij})_{1 \leq i \leq j \leq n}).$$
 If the space of virtual isometries is endowed with the $\sigma$-algebra generated by the coordinates, then the Cayley transform induces a bijection between 
 the probability measures on this space and the probabilities on the space of infinite hermitian matrices, and also a bijection between the central measures:
 hence, the results of Olshanski and Vershik \cite{OV} can be immediately translated into a classification of all the central measures on the 
 space of virtual isometries. One also deduces that under such a central measure, and for all $k \geq 1$, the $k$-th smallest positive and negative eigenangles 
 of $u_n$, multiplied by $n$, tend almost surely to a limit when $n$ goes to infinity. The translation of the particular case of Hua-Pickrell measures into the 
 unitary context gives a family $\tilde{\mu}^{(\delta)}$ of central measures on the space of virtual rotations, such that for all $n \geq 1$, its $n$-th projection
 $\tilde{\mu}^{(\delta,n)}$ is given by 
$$\tilde{\mu}^{(\delta,n)} (du) = c'_{\delta,n} \det(1-u)^{\bar{\delta}}\det(1-\overline{u})^{\delta} du,$$
where $c'_{\delta,n}$ is a normalization constant, and $du$ the Haar measure on $V(n)$, i.e. the restriction to $V(n)$ of the Haar measure on the unitary group $U(n)$, which is 
a probability measure since $V(n)$ contains almost every element of $U(n)$. In the case where $\delta = 0$, the measure 
$\tilde{\mu}^{(0)}$ can be called {\it Haar measure} on the space of virtual isometries, since for all $n \geq 1$, its $n$-th projection 
is equal to the Haar measure 
on $V(n)$. Moreover, the corresponding limiting point process of the renormalized scaled eigenangles is a determinantal process with 
sine kernel: informally, if $x_1, \dots, x_k \in \mathbb{R}$, then the probability that there is
 a point in the neighborhood of $x_j$ for all $j \in \{1,\dots,k\}$ is
 proportional to $\det(K(x_j,x_l)_{1 \leq j,l \leq k})$, where the kernel $K$ is given by
 $$K(x,y) =\frac{1}{2\pi} \sin(\pi(x-y))/(\pi(x-y).$$ 
 Note that the weak version of this result (convergence in law of the point process of the eigenangles towards a sine kernel process) is very classical in random 
 matrix theory. The introduction of the virtual rotations for this problem has the following advantages:
  \begin{itemize}
 \item One has a result of almost sure convergence when the dimension goes to infinity, which is quite rare in random matrix theory. 
 \item The limit point process is directly associated to a random virtual isometry, with a deterministic map (almost surely well-defined): the link
 between sine kernel process and random matrices is particularly explicit in this setting. 
 \end{itemize}
 \noindent
 As we have seen in this introduction, there are two kinds of sequences $(u_n)_{n \geq 1}$ of unitary matrices giving a similar behavior for the 
 small eigenangles: the virtual permutations, for which $u_n$ is a
 $n \times n$ permutation matrix (identified with an element of $\mathcal{S}_n$) for all $n \geq 1$, and the virtual rotations, for which 
 $u_n \in V(n)$. 
 
 This work is intended as an attempt to understand the links between the virtual permutations  and the virtual rotations.  Given that the group of permutations of size $n$ can be identified with the corresponding subgroup of permutation matrices of the unitary group, it is natural to expect that  virtual permutations can be obtained from the construction of virtual rotations. However it turns out that one cannot recover the virtual permutations from Neretin's construction because in this construction, for any $n$, $1$ cannot be an eigenvalue of $u_n$ for any virtual rotation $(u_n)_{n\geq1}$. We hence propose another construction for virtual isometries, based on complex reflections, which extends both constructions. Our construction provides us with a simple recursive relation between the characteristic polynomials of $u_n$ and $u_{n+1}$ for a virtual isometry $(u_n)_{n\geq1}$. Moreover it shows us how to generate random virtual isometries. As a consequence we are able to recover the known result that  the smallest normalized eigenangles of a virtual rotation, under the Haar measure, converge a.s. to a point process
whose correlation function is given by the sine kernel. Our proof exhibits an interesting interlacement properties for the eigenangles of $u_n$ and $u_{n+1}$.

More precisely,  in Section \ref{Vi}  we define a projection from $U(n)$ to $U(m)$, which  extends both the projection 
 $\pi_{n,m}$ from $\mathcal{S}_n$ to $\mathcal{S}_m$ and the projection $\tilde{\pi}_{n,m}$ from $V(n)$ to $V(m)$. 
In Section \ref{Haar2}, we deduce in our framework a natural construction of the Hua-Pickrell measures (and in particular the Haar measure) on virtual rotations, 
in terms of products of independent random reflections. As we have seen before, the general results by Borodin, Olshanski and Vershik given in \cite{BO} and \cite{OV} imply
the almost sure convergence of the 
renormalized eigenangles, for a virtual isometry following Haar measure. In Section \ref{conv}, we give a direct and purely probabilistic proof of this result, which 
gives some information on the corresponding rate of convergence.
\section{The space of virtual isometries} \label{Vi}

As stated in the introduction, our purpose in this section is to define a strict analogue of the virtual permutations,
in the context of the unitary group. Our construction is expected to be applicable to other compact groups
(like the orthogonal or the symplectic group), however, for the sake of simplicity, we only deal
with unitary matrices in this article. The first step of our construction is the
following result, which proves that intuitively, it is possible to construct
a natural projection from the unitary group of a finite-dimensional Hilbert space $E$ to a unitary group
of a subspace of $F$.
\begin{prop} \label{projection}
Let $H$ be a complex Hilbert space, $E$ a finite-dimensional subspace of $H$ and $F$ a subspace of $E$. Then, for
 any unitary operator $u$ on $H$ which fixes
each element of $E^{\perp}$, there exists a unique unitary operator $\pi_{E,F}(u)$ on $H$ which satisfies
the  following two conditions:
\begin{itemize}
\item $\pi_{E,F}(u)$ fixes each element of $F^{\perp}$;
\item the image of $H$ by $u-\pi_{E,F}(u)$ is included into the image of $F^{\perp}$ by $u-\Id$.
\end{itemize}
Moreover, if $G$ is a subspace of $F$, $\pi_{F,G} \circ \pi_{E,F} (u)$ is well-defined and is
equal to $\pi_{E,G} (u)$.
\end{prop}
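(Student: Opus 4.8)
The plan is to work in the decomposition $H = F \oplus (E \ominus F) \oplus E^\perp$ and to reduce the construction of $\pi_{E,F}(u)$ to a concrete formula resembling Neretin's, obtained this time without the invertibility assumption on the relevant block. Since $u$ fixes $E^\perp$ pointwise, $u$ preserves $E$ and restricts to a unitary operator on the finite-dimensional space $E$; so it suffices to construct $\pi_{E,F}(u)$ as a unitary operator on $E$ fixing $E \ominus F$ from outside, and then extend it by the identity on $E^\perp$. Writing $u|_E$ in block form with respect to $E = F \oplus (E\ominus F)$ as $\begin{pmatrix} A & B \\ C & D\end{pmatrix}$, the first required condition forces $\pi_{E,F}(u)$ to have the block form $\begin{pmatrix} A' & 0 \\ 0 & \Id\end{pmatrix}$ on $E$ for some operator $A'$ on $F$, and the second condition says that $(u - \pi_{E,F}(u))(H) \subseteq (u-\Id)(E\ominus F)$, which in block terms reads: the columns of $\begin{pmatrix} A - A' & B \\ C & D - \Id\end{pmatrix}$ lie in the span of the columns of $\begin{pmatrix} B \\ D - \Id\end{pmatrix}$.

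First I would establish \emph{uniqueness}. The condition on the second column block is automatic; the condition on the first column block says $\begin{pmatrix} A-A' \\ C\end{pmatrix}$ factors as $\begin{pmatrix} B \\ D-\Id\end{pmatrix} X$ for some $X : F \to E\ominus F$. The second row gives $C = (D-\Id)X$, and since $u|_E$ is unitary one checks that $\ker(D-\Id) = \{0\}$ would give uniqueness of $X$ directly; in general one must argue that $C$ indeed lies in the range of $D - \Id$ and that any ambiguity in $X$ (an element of $\ker(D-\Id)$) does not affect $A' = A + BX$, because on $\ker(D-\Id)$ one has, by unitarity of $u|_E$, that $B$ vanishes (if $(D-\Id)v = 0$ then $\|Dv\|^2 = \|v\|^2$ forces $\|Cv\|^2 = 0$ by the column-norm relation, hence also $Bv = 0$ by the row-norm relation). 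This pins down $A'$ uniquely, hence $\pi_{E,F}(u)$.

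Next, \emph{existence}: with $X$ any solution of $(D-\Id)X = -C$ (which exists by the range argument just sketched, using that $C^* = $ image constraints from unitarity, i.e. $\mathrm{ran}(C) \perp \ker(D^*-\Id)$), set $A' := A + BX$ and $\pi_{E,F}(u) := A' \oplus \Id_{E\ominus F} \oplus \Id_{E^\perp}$. One then must verify $A'$ is unitary on $F$; this is the standard computation showing the Neretin projection (a.k.a. the "sub-unitary corner completed by identity") is unitary, and it goes through verbatim from the block relations $A^*A + C^*C = \Id$, $B^*B + D^*D = \Id$, $A^*B + C^*D = 0$ once one substitutes $A' = A + BX$ and uses $(D - \Id)X = -C$. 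Finally, the \emph{compatibility} $\pi_{F,G}\circ \pi_{E,F}(u) = \pi_{E,G}(u)$: both sides are unitary operators fixing $G^\perp$, and both satisfy the second defining property with respect to $(E,G)$ — for $\pi_{E,G}(u)$ by definition, and for $\pi_{F,G}(\pi_{E,F}(u))$ because $(u - \pi_{E,G})(H) \subseteq (u - \pi_{E,F})(H) + (\pi_{E,F} - \pi_{F,G}\pi_{E,F})(H) \subseteq (u-\Id)(F^\perp \cap E) + (\pi_{E,F} - \Id)(G^\perp \cap F)$, and each of these is contained in $(u - \Id)(G^\perp \cap E)$ (the first since $F^\perp \cap E \subseteq G^\perp \cap E$; the second since on $G^\perp \cap F$, $\pi_{E,F}(u)$ agrees with $u$ up to something in $(u-\Id)(F^\perp)$, so $(\pi_{E,F}(u) - \Id)$ and $(u - \Id)$ have the same range restricted there) — so by the uniqueness already proved, the two coincide.

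The main obstacle I anticipate is the handling of the non-invertible case: when $D - \Id$ fails to be injective (precisely the situation, unlike Neretin's, where $1$ \emph{can} be an eigenvalue), one cannot simply write $X = -(D-\Id)^{-1}C$, and must instead argue carefully that (i) $C$ lies in $\mathrm{ran}(D-\Id)$ so a solution $X$ exists, and (ii) the resulting $A' = A + BX$ is independent of the choice of $X$ and is unitary. Both (i) and (ii) hinge on exploiting the unitarity relations of $u|_E$ to show that $B$ and $C$ are "controlled" by $D - \Id$ on the relevant kernels/cokernels; once that linear-algebra lemma is isolated, the rest is bookkeeping.
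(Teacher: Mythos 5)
Your proof is correct in its essentials, but it takes a genuinely different route from the paper. You argue directly in arbitrary codimension by writing $u|_E$ in block form $\begin{pmatrix} A & B \\ C & D\end{pmatrix}$ with respect to $E = F \oplus (E\ominus F)$ and reducing both existence and uniqueness to the solvability of the linear equation $(D-\Id)X = -C$, with the two unitarity facts $B|_{\ker(D-\Id)} = 0$ and $\ker(D^*-\Id) \subseteq \ker C^*$ doing all the work; this yields a single explicit formula $A' = A + BX$ that extends Neretin's $A + B(\Id-D)^{-1}C$ to the case where $1$ is an eigenvalue. The paper proceeds quite differently: uniqueness comes from the one-line observation that $F \cap (u-\Id)(F^\perp) = \{0\}$ by unitarity (any two candidates then differ by an operator whose range sits in this intersection); compatibility is proved next, assuming both projections exist; and existence is established by induction down a chain of codimension-one subspaces, where the projection is given by the rank-one formula in \eqref{v}. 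What the paper's route buys is the factorization $u = r\,\pi_{E,F}(u)$ with $r$ a reflection, which is the workhorse for Proposition \ref{chinois} and the whole construction of random virtual isometries from independent reflections; your route buys a closed formula valid at once for all codimensions without inverting $\Id - D$. Two small slips to fix in a written-up version: in the uniqueness paragraph you write $\|Cv\|^2 = 0$ for $v \in \ker(D-\Id) \subseteq E\ominus F$, which does not type-check since $C$ is defined on $F$; the column-norm relation $B^*B + D^*D = \Id_{E\ominus F}$ gives $Bv = 0$ directly. And in the compatibility paragraph the object whose range you are bounding should be $u - \pi_{F,G}(\pi_{E,F}(u))$, not $u - \pi_{E,G}(u)$; the intended chain $(u-v)(H) + (v-w)(H) \subseteq (u-\Id)(F^\perp) + (v-\Id)(G^\perp) \subseteq (u-\Id)(G^\perp)$ with $v := \pi_{E,F}(u)$, $w := \pi_{F,G}(v)$ is exactly the paper's string of inclusions.
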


\begin{proof}
Let $x$ be an element of $F \cap (u-\Id)(F^{\perp})$. There exists $y \in F^{\perp}$ such that
$x = u(y) - y$, or equivalently, $u(y) = x + y$. Since $x \in F$
and $y \in F^{\perp}$, one has $||x||^2 + ||y||^2 = ||x+y||^2 = ||u(y)||^2 = ||y||^2$, which implies
$x = 0$. Now, if $v_1$ and $v_2$ are two operators which satisfy
the conditions defining  $\pi_{E,F}(u)$, one has the following:
\begin{itemize}
\item $v_1$ and $v_2$ fix globally the space $F$, since they fix the space $F^{\perp}$;
\item $v_1 - v_2$ vanishes on $F^{\perp}$, since $v_1$ and $v_2$ fix each point of this space;
\item the image of $v_1 - v_2$ is included in $(u-\Id)(F^{\perp})$, since the images of $u-v_1$ and
$u-v_2$ are both included in this space.
\end{itemize}
These three properties imply that the image of $v_1-v_2$ is included in the space
$F \cap (u-\Id)(F^{\perp}) = \{0\}$, which
proves the uniqueness of $\pi_{E,F} (u)$. Let us now show its existence and the projective property of the
map $\pi_{E,F}$.

We can first remark that for $G \subset F \subset E \subset H$,
 if $\pi_{E,F}$ and $\pi_{F,G}$ are well-defined, then $\pi_{E,G}$ is also well-defined and
is equal to $\pi_{F,G} \circ \pi_{E,F}$. Indeed, for all unitary operators $u$
fixing each element of $E^{\perp}$, the two unitary operators
$v := \pi_{E,F}(u)$ and $w := \pi_{F,G}(v)$ are well-defined and satisfy the following assumptions:
\begin{itemize}
\item $v$ fixes all the elements of $F^{\perp}$;
\item $(u-v)(H) \subset (u-\Id)(F^{\perp})$;
\item $w$ fixes all the elements of $G^{\perp}$;
\item $(v-w)(H) \subset (v-\Id)(G^{\perp})$.
\end{itemize}
\noindent
This yields the elementary inclusions
\begin{align*}
(u -  w)(H)  & \subset \Span((u-v)(H), (v - w)(H)) \\ &
 \subset  \Span((u-\Id)(F^{\perp}),(v-\Id)(G^{\perp})) \\ &
\subset \Span ((u-\Id)(F^{\perp}), (u-\Id)(G^{\perp}), (u - v)(G^{\perp}))   \\ &
\subset \Span((u-\Id)(G^{\perp}), (u - v)(H))\\& \subset (u-\Id) (G^{\perp}).
\end{align*}
\noindent
Since $w$ fixes each element of $G^{\perp}$, $\pi_{E,G}(u)$ is well-defined and is equal to $w$. By
 induction, it is now sufficient to prove the existence
of $\pi_{E,F}$ in the particular case where $E= \operatorname{Vect}(F,e)$, where $e$ is a unit
 vector, orthogonal to $F$. In this case, if $u$ is a unitary operator fixing each element of
$E^{\perp}$, then the operator $v := \pi_{E,F}(u)$
can be constructed explicitly as follows.
\begin{itemize}
\item If $u(e) = e$, then one takes $v = u$, whch fixes $E^{\perp}$ and $e$, hence, it fixes $F^{\perp}$, and
$(u-v)(H) = \{0\} = (u-\Id)(F^{\perp})$.
\item If $u(e) \neq e$, then for all $x \in H$, we define
\begin{equation}
v(x) := u(x) + \frac{ \langle e - u(e), u(x) \rangle}{\langle e - u(e), u(e) \rangle} \, (e - u(e)). \label{v}
\end{equation}
The denominator in the expression defining $v(x)$ does not vanish and the following properties hold:
\begin{itemize}
\item for all $x \in H$,
\begin{align*} ||v(x)||^2 & = ||u(x)||^2 + 2 \Re \left( \frac{| \langle e- u(e), u(x) \rangle |^2}{ \langle e - u(e), u(e) \rangle} \right)
+ \frac{| \langle e- u(e), u(x) \rangle |^2}{| \langle e - u(e), u(e) \rangle|^2} \, ||e-u(e)||^2 \\
& = ||x||^2 + \left( \frac{| \langle e- u(e), u(x) \rangle |^2}{| \langle e - u(e), u(e) \rangle|^2} \right) \,
\left( 2 \Re ( \langle e - u(e), u(e) \rangle) + ||e-u(e)||^2 \right)
\\ &=  ||x||^2 + \left( \frac{| \langle e- u(e), u(x) \rangle |^2}{| \langle e - u(e), u(e) \rangle|^2} \right) \, (||e||^2 - ||u(e)||^2)\\
& = ||x||^2,
\end{align*}
which implies that $v$ is a unitary operator;
\item for $x \in E^{\perp}$, $u(x) = x$, and $e - u(e) \in E$, since $E$ is globally fixed by $u$. \\ Hence, $\langle e - u(e), u(x) \rangle = 0$, and
$v(x) = u(x) = x$: $v$ fixes each element of $E^{\perp}$;
\item by \eqref{v}, $v$ fixes $e$, and then, it fixes each element of $F^{\perp}$;
\item Again by \eqref{v}, for all $x \in H$, $u(x)-v(x)$ is a multiple of $u(e) - e$, and then in the image of $F^{\perp}$ by $u - \Id$.
\end{itemize}
\end{itemize}
This concludes the proof.
\end{proof}

\begin{rem}
In the case where $E= \operatorname{Vect}(F,e)$, $e$ being a unit
 vector, orthogonal to $F$, the formula \eqref{v} proves that for $e \neq u(e)$,
$u=r \pi_{E,F}(u)$, where $r$ is the unique reflection (i.e. $r$ is unitary and $r - \Id$ has rank one)
such that $r(e) =u(e)$. Similarly, $u = \pi_{E,F}(u) r'$ where $r'$ is the unique reflection such that
$r'(u^{-1}(e)) = e$.
\end{rem}
The following consequence of Proposition \ref{projection} shows that $\pi_{E,F}$ is also a projection
in the sense of the minimization of a distance:
\begin{prop} \label{projection2}
Let $H$ be a complex Hilbert space, and let $U_0(H)$ be the space of the unitary operators on $H$ which fix
each element of the orthogonal of a finite-dimensional subspace of $H$. Then the map
$\dd$ from $U_0(H) \times U_0(H)$ to $\mathbb{N}_0$, given by $\dd(u,v) := \rank(u-v)$ defines a finite
distance on $U_0(H)$. Moreover, if $F \subset E$ are two finite-dimensional subspaces of $H$, and if $u$
is a unitary operator fixing each element of $E^{\perp}$, then $\pi_{E,F}(u)$ is the unique
unitary operator fixing each element of $F^{\perp}$ and such that $\dd(u, \pi_{E,F}(u))$ is minimal.
The image of $H$ by $u - \pi_{E,F}(u)$ is equal to the image of $F^{\perp}$ by $u -\Id$, and one has:
$$ \dd(u, \pi_{E,F}(u)) = \dim(E) - \dim(F) - \dim(\{x \in E \cap F^{\perp}, u(x) = x\}),$$
in particular, if one is not an eigenvalue of the restriction of $u$ to $E \cap F^{\perp}$, then
$$ \dd(u, \pi_{E,F}(u)) = \dim(E) - \dim(F).$$
\end{prop}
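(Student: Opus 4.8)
The plan is to reduce every assertion to finite-dimensional linear algebra on $E$, together with the uniqueness already proved in Proposition \ref{projection}. For the metric statement: given $u,v\in U_0(H)$ fixing pointwise the orthogonals of finite-dimensional subspaces $E_1,E_2$, both coincide with the identity on $(E_1+E_2)^{\perp}=E_1^{\perp}\cap E_2^{\perp}$, so $(u-v)(H)\subseteq E_1+E_2$ and $\dd(u,v)<\infty$; symmetry holds since $(v-u)(H)=(u-v)(H)$, $\dd(u,v)=0$ is equivalent to $u=v$ by definition of the rank, and the triangle inequality follows from $(u-w)(H)\subseteq\Span\big((u-v)(H),(v-w)(H)\big)$, which gives $\rank(u-w)\le\rank(u-v)+\rank(v-w)$.

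For the rest, I would fix finite-dimensional $F\subseteq E$, a unitary $u$ fixing $E^{\perp}$ pointwise, and set $W:=E\cap F^{\perp}$ (so that $E=F\oplus W$, $F^{\perp}=W\oplus E^{\perp}$ and $\dim W=\dim E-\dim F$) and $W_0:=\{x\in W:\ u(x)=x\}$. The first step is a rank formula: if $v$ is any unitary fixing $F^{\perp}$ pointwise, then $E^{\perp}\subseteq F^{\perp}$ is fixed pointwise by both $u$ and $v$, so both preserve $E$, the operator $u-v$ vanishes on $E^{\perp}$, and $(u-v)(H)=(u-v)(E)\subseteq E$; hence $\dd(u,v)=\dim E-\dim S_v$, where $S_v:=\{x\in E:\ u(x)=v(x)\}$. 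Since $v$ fixes $W\subseteq F^{\perp}$ pointwise, one checks directly that $S_v\cap W=W_0$, so passing to the quotient $E\to E/W$ yields $\dim S_v=\dim(\text{image of }S_v\text{ in }E/W)+\dim(S_v\cap W)\le\dim(E/W)+\dim W_0=\dim F+\dim W_0$. This produces the uniform lower bound $\dd(u,v)\ge\dim W-\dim W_0$, valid for every admissible $v$.

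The second step is to show that $v_0:=\pi_{E,F}(u)$ realizes this bound and is its unique minimizer. Since $v_0$ fixes $F^{\perp}$ pointwise, $u-v_0$ agrees with $u-\Id$ on $W$, so $(u-\Id)(W)\subseteq(u-v_0)(H)$; on the other hand Proposition \ref{projection} gives $(u-v_0)(H)\subseteq(u-\Id)(F^{\perp})=(u-\Id)(W)$ (the equality because $u-\Id$ annihilates $E^{\perp}$). Hence $(u-v_0)(H)=(u-\Id)(W)$, a space of dimension $\dim W-\dim W_0$, which is both the announced value of $\dd(u,v_0)$ and the minimum found above; it also proves the claimed equality between the image of $u-\pi_{E,F}(u)$ and the image of $F^{\perp}$ under $u-\Id$. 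For uniqueness, if $v$ fixes $F^{\perp}$ pointwise and $\dd(u,v)=\dim W-\dim W_0$, then $(u-\Id)(W)\subseteq(u-v)(H)$ is an inclusion of finite-dimensional spaces of equal dimension, hence an equality, so $v$ satisfies the two characterizing conditions of Proposition \ref{projection} and therefore $v=\pi_{E,F}(u)$. The displayed formula is then $\dd(u,v_0)=\dim W-\dim W_0$ rewritten with $\dim W=\dim E-\dim F$ and $W_0=\{x\in E\cap F^{\perp}:\ u(x)=x\}$, and its special case is the remark that this set is $\{0\}$ exactly when $1$ is not an eigenvalue of $u|_{E\cap F^{\perp}}$.

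The computations are all routine; the only points needing care are dimensional bookkeeping — making sure every pair of subspaces being compared is finite-dimensional, so that "a subspace of equal dimension coincides" is legitimate — and the verification that $S_v\cap W=W_0$, where the inclusion $\subseteq$ uses both $v|_W=\Id$ and the definition of $S_v$ while the inclusion $\supseteq$ uses $v|_W=\Id$. I do not anticipate a genuine obstacle beyond this.
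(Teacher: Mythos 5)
Your proof is correct, and the overall skeleton (finiteness of $\dd$, triangle inequality via the span inclusion, image equality $(u-\pi_{E,F}(u))(H)=(u-\Id)(W)$, uniqueness via equal dimensions forcing equality of subspaces) matches the paper's. The one place you diverge is the lower bound. You establish $\dd(u,v)\ge\dim W-\dim W_0$ by introducing $S_v=\{x\in E:u(x)=v(x)\}$, identifying $S_v\cap W=W_0$, and passing to the quotient $E/W$. The paper gets the same bound in one line by observing that for \emph{any} $v$ fixing $F^\perp$ pointwise and for $x\in F^\perp$, $(u-v)(x)=(u-\Id)(x)$, hence $(u-v)(H)\supseteq(u-\Id)(F^\perp)=(u-\Id)(W)$, so $\rank(u-v)\ge\dim(u-\Id)(W)=\dim W-\dim W_0$. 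You actually rediscover exactly this containment in your second step, but apply it only to $v_0=\pi_{E,F}(u)$; had you noticed it holds for arbitrary admissible $v$, your $S_v$/quotient detour would have been unnecessary. So the two routes are equivalent in conclusion, but the paper's is leaner; yours, by contrast, makes the dimension count $\dim E-\dim S_v$ explicit, which some readers may find more transparent as linear-algebraic bookkeeping. No gap either way.
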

\begin{proof}
Let $u, v \in U_0(H)$. By assumption, the images of $u - \Id$ and $v - \Id$ are finite-dimensional,
and then $u-v$ has finite rank: $\dd(u,v)$ is finite. It is obvious that $\dd(u,u)=0$ and
$\dd(u,v) = \dd(v,u)$, and if $\dd(u,v) = 0$, the image of $u-v$ is equal to $\{0\}$,
which implies $u=v$. Moreover, if $w \in U_0(H)$, then
$$(u-w) (H) \subset \Span((u-v)(H), (v-w)(H)),$$
which implies
$$\dd(u,w) \leq \dd(u,v) + \dd(v,w).$$
Hence $\dd$ defines a finite distance. Now, let us suppose that $u$ fixes each element in $E^{\perp}$
and $v$ fixes each element in $F^{\perp}$, for two finite-dimensional spaces $F \subset E$.
Then, for $x \in F^{\perp}$, $(u-v)(x) = (u-\Id)(x)$, which implies that
the image of $u-v$ contains the image of $F^{\perp}$ by $u-\Id$. Since by Proposition \ref{projection},
$(u-\pi_{E,F}(u)) (H) \subset (u-\Id)(F^{\perp})$:
\begin{itemize}
\item $(u-\pi_{E,F}(u))(H) = (u-\Id)(F^{\perp})$,
\item $\pi_{E,F}(u)$ is the unique unitary operator $v$, fixing each element of $F^{\perp}$, and
such that the space $(u-v)(H)$, and then the distance $\dd(u,v)$, is minimal.
\end{itemize}
\noindent
Now, since $u$ fixes each element of $E^{\perp}$, $(u-\Id)(F^{\perp})= (u-\Id) (E \cap F^{\perp})$, and
then
\begin{align*}
 d(u, \pi_{E,F}(u)) & = \dim(E \, \cap \, F^{\perp}) - \dim ( \operatorname{Ker} (u- \Id)
\, \cap \,  E \cap F^{\perp}) \\ & = \dim (E) - \dim(F) - \dim(\{x \in E \, \cap F^{\perp}, u(x) = x\}),
\end{align*}
which concludes the proof.
\end{proof}
\noindent
\begin{rem} For other distances $\dd'$ on $U_0(H)$, for $F \subset E \subset H$, $E$ finite-dimensional, and
for a unitary operator $u$ fixing all the elements of $E^{\perp}$, it can be possible to define
$\pi^{(\dd')}_{E,F}(u)$ as the unitary operator $v$ which fixes the elements of $F^{\perp}$ and
for which the distance $\dd'(u,v)$ is as small as possible. However, for $G \subset E$, one does not have in general
$\pi^{(\dd')}_{E,G} = \pi^{(\dd')}_{F,G} \, \pi^{(\dd')}_{E,F}$. A natural question, not treated here, is the following: what
 are the distances $\dd'$ for which the projective property
$\pi^{(\dd')}_{E,G} = \pi^{(\dd')}_{F,G} \, \pi^{(\dd')}_{E,F}$ remains true?
\end{rem}

The existence of the projective map described above implies the possibility to define the virtual isometries.
Indeed, let $H := \ell^2(\mathbb{C})$, and let $(e_n)_{n \geq 1}$ be the canonical
Hilbert basis of $H$. For all $n \geq 1$, the space of unitary operators fixing each
element of the orthogonal of $\Span(e_1,\dots,e_n)$ can be canonically identified with
the unitary group $U(n)$. By this identification, for $n \geq m \geq 1$, the projection
$\pi_{\Span(e_1,\dots,e_n), \Span(e_1,\dots,e_m)}$ defines a map $\pi_{n,m}$ from
$U(n)$ to $U(m)$, and for $n \geq m \geq p \geq 1$, one has
$\pi_{n,p} = \pi_{m,p} \circ \pi_{n,m}$.

\begin{defn} \label{casp}
A {\it virtual isometry} is a
 sequence $(u_n)_{n \geq 1}$ of unitary matrices, such that for all
$n \geq 1$, $u_n \in U(n)$ and $\pi_{n+1,n}(u_{n+1}) = u_n$. In this case, for all $n \geq m \geq 1$,
$\pi_{n,m}(u_n) = u_m$. The space of virtual isometries will be denoted $U^{\infty}$.
\end{defn}
\begin{rem}
Given two virtual isometries $(u_n)_{n \geq 1}$ and $(v_n)_{n \geq 1}$, the sequence $(w_n)_{n \geq 1}$ obtained from pointwise multiplication $w_n=u_n v_n$
 is not a virtual isometry in general: the coherence property doesn't hold for $(w_n)_{n \geq 1}$. Hence $U^\infty$ has no group structure.
\end{rem}
\noindent
It is now possible to check that the virtual isometries defined in the present paper are both a generalization of the 
virtual permutations, and an extension of the virtual isometries in the sense of Neretin. 
\begin{prop}
Let $(\sigma_n)_{n \geq 1}$ be a sequence of permutations such that $\sigma_n \in \mathcal{S}_n$ for all
$n \geq 1$, and let $(\Sigma_n)_{n \geq 1}$ be the corresponding sequence of permutation matrices. Then
$(\Sigma_n)_{n \geq 1}$ is a virtual isometry if and only if $(\sigma_n)_{n \geq 1}$ is a virtual
permutation.
\end{prop}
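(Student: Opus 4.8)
The plan is to reduce the statement to the one-step projection and to compute that projection on a permutation matrix explicitly, using the formula \eqref{v} from the proof of Proposition \ref{projection}. Recall that $(\sigma_n)_{n\ge 1}$ is a virtual permutation exactly when, for every $n$, the permutation obtained from $\sigma_{n+1}$ by deleting $n+1$ from its cycle structure is $\sigma_n$, while $(\Sigma_n)_{n\ge 1}$ is a virtual isometry exactly when $\pi_{n+1,n}(\Sigma_{n+1})=\Sigma_n$ for every $n$, where now $\pi_{n+1,n}=\pi_{\Span(e_1,\dots,e_{n+1}),\,\Span(e_1,\dots,e_n)}$. Since the map sending a permutation in $\mathcal{S}_k$ to its permutation matrix (with the convention $\Sigma e_j=e_{\sigma(j)}$, viewed as an operator on $H$ fixing $e_j$ for $j>k$) is injective, the whole statement will follow once I establish the following claim: for every $n\ge 1$ and every $\sigma\in\mathcal{S}_{n+1}$ with permutation matrix $\Sigma$, the operator $\pi_{n+1,n}(\Sigma)$ is the permutation matrix of the permutation $\sigma'\in\mathcal{S}_n$ obtained from $\sigma$ by deleting $n+1$.

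To prove the claim I would split into two cases, taking $F=\Span(e_1,\dots,e_n)$, $E=\Span(e_1,\dots,e_{n+1})$ and $e=e_{n+1}$ in the construction of Proposition \ref{projection}. If $\sigma(n+1)=n+1$, then $\Sigma(e_{n+1})=e_{n+1}$, so by the first branch of that construction $\pi_{n+1,n}(\Sigma)=\Sigma$; and since deleting a fixed point leaves $\sigma$ unchanged on $\{1,\dots,n\}$, the operator $\Sigma$ is already the permutation matrix of $\sigma'=\sigma|_{\{1,\dots,n\}}$. If $\sigma(n+1)=b\neq n+1$, set $a:=\sigma^{-1}(n+1)\in\{1,\dots,n\}$; then $\sigma'$ agrees with $\sigma$ off $\{a,n+1\}$, with $\sigma'(a)=b$ and $\sigma'(n+1)=n+1$. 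I would then apply \eqref{v} with $u=\Sigma$ and $u(e)=\Sigma(e_{n+1})=e_b$: as $b\neq n+1$ one has $\langle e_{n+1}-e_b,e_b\rangle=-1\neq 0$, so \eqref{v} becomes $v(x)=\Sigma(x)-\langle e_{n+1}-e_b,\Sigma(x)\rangle\,(e_{n+1}-e_b)$. Evaluating on the canonical basis gives $v(e_{n+1})=e_{n+1}$, $v(e_a)=e_b$, and $v(e_j)=e_{\sigma(j)}$ for $j\le n$ with $j\neq a$ — the last because $a$ is the only preimage of $n+1$ and $n+1$ the only preimage of $b$, so $\sigma(j)\notin\{n+1,b\}$ and the correction term vanishes; likewise $v(e_j)=e_j$ for $j>n+1$. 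Hence $v=\pi_{n+1,n}(\Sigma)$ is precisely the permutation matrix of $\sigma'$, which is the claim.

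Finally I would assemble the chain of equivalences: $(\Sigma_n)$ is a virtual isometry $\iff$ $\pi_{n+1,n}(\Sigma_{n+1})=\Sigma_n$ for all $n$ $\iff$ (by the claim) the permutation matrix of $\pi_{n+1,n}(\sigma_{n+1})$ equals that of $\sigma_n$ for all $n$ $\iff$ (by injectivity of $\tau\mapsto$ permutation matrix) $\pi_{n+1,n}(\sigma_{n+1})=\sigma_n$ for all $n$ $\iff$ $(\sigma_n)$ is a virtual permutation. I expect no genuine obstacle in this argument: the only points demanding care are the correct matching of the cycle-deletion operation with the two branches of the construction, the index bookkeeping in \eqref{v}, and the remark that in the non-fixed-point case the condition $b\neq n+1$ simultaneously ensures we are in the branch $u(e)\neq e$ and that the denominator appearing in \eqref{v} does not vanish.
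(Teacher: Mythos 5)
Your proof is correct, but it takes a genuinely different route from the paper's. The paper's own proof never invokes the explicit formula~\eqref{v}: it uses the abstract characterization from Propositions~\ref{projection} and~\ref{projection2}, namely that $\pi_{n+1,n}(\Sigma_{n+1})=\Sigma_n$ if and only if the image of $\Sigma_{n+1}-\Sigma_n$ lies in the one-dimensional space spanned by $e_{n+1}-\Sigma_{n+1}(e_{n+1})=e_{n+1}-e_{\sigma_{n+1}(n+1)}$, and then checks this column by column, translating $e_{\sigma_{n+1}(j)}-e_{\sigma_n(j)}\in\Span(e_{n+1}-e_{\sigma_{n+1}(n+1)})$ directly into the two cases $\sigma_{n+1}(j)=\sigma_n(j)$ or $\sigma_{n+1}(j)=n+1,\ \sigma_n(j)=\sigma_{n+1}(n+1)$. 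You instead compute $\pi_{n+1,n}(\Sigma)$ outright from~\eqref{v}, identify it as the permutation matrix of the cycle-deleted permutation, and conclude by injectivity of $\tau\mapsto$ permutation matrix. Both are correct; yours is more constructive and in fact explicitly exhibits the projected matrix (and, in the non-fixed-point case, implicitly the factorization $\Sigma=r\,\pi_{n+1,n}(\Sigma)$ with $r$ a transposition), anticipating the content of Proposition~\ref{chinois}, while the paper's argument is shorter because it only verifies an inclusion rather than computing the projection. Your case analysis and index bookkeeping (the key observation that for $j\le n$, $j\neq a$, one has $\sigma(j)\notin\{n+1,b\}$ so the correction term in~\eqref{v} vanishes) are all correct.
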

\begin{proof}
In this proof, and in all the sequel of the article, we identify $\mathbb{C}^n$ with the
set of the sequences $(x_k)_{k \geq 1}$ such that $x_k = 0$ for all $k >n$, and we define
$(e_k)_{k \geq 1}$ as the canonical basis of $\mathbb{C}^{\mathbb{N}}$: in this way,
$(e_k)_{1 \leq k \leq n}$ is identified with the canonical basis of $\mathbb{C}^n$ for all $n \geq 1$.
With this convention, the sequence $(\Sigma_n)_{n \geq 1}$ is a virtual
isometry if and only if for all $n \geq 1$, the
image of $\Sigma_{n+1} - \Sigma_n$ is in the vector space generated by $e_{n+1} - \Sigma_{n+1}(e_{n+1})
= e_{n+1} - e_{\sigma_{n+1}(n+1)}$. Since $\Sigma_{n+1}(e_j) - \Sigma_n(e_j) =
 e_{\sigma_{n+1}(j)}- e_{\sigma_n(j)}$,
the condition above is satisfied if and only if for all $n \geq 1$, $j \in \{1,\dots,n\}$, one
 of the two following situations arises:
\begin{itemize}
\item $\sigma_{n+1}(j) = \sigma_n(j)$;
\item $\sigma_{n+1}(j) = n+1$ and $\sigma_n(j) = \sigma_{n+1}(n+1)$.
\end{itemize}
\noindent
In other words, $(\Sigma_n)_{n \geq 1}$ is a virtual isometry if and only if for all
$n \geq 1$, one of the two following cases holds:
\begin{itemize}
\item the restriction of $\sigma_{n+1}$ to $\{1,\dots,n\}$ is equal to $\sigma_n$, and
 $\sigma_{n+1}(n+1) = n+1$;
\item $\sigma_{n+1}(n+1) \neq n+1$,
$\sigma_{n}(j) = \sigma_{n+1}(j)$ for $j \in \{1, \dots, n\} \backslash \{\sigma_{n+1}^{-1}(n+1)\}$, and
$\sigma_{n}(j) = \sigma_{n+1}(n+1)$ for $j = \sigma_{n+1}^{-1}(n+1)$.
\end{itemize}
\noindent
This is equivalent to the fact that $(\sigma_n)_{n \geq 1}$ is a virtual permutation.
\end{proof}
\noindent
\begin{prop}
Let $(u_n)_{n \geq 1}$ be a sequence of unitary matrices such that $u_n \in V(n)$ for all $n \geq 1$ (recall that 
$V(n)$ is the set of $n \times n$ unitary matrices which do not have one as an eigenvalue).
Then, $(u_n)_{n \geq 1}$ is a virtual rotation in the sense of Neretin if and only if it is a virtual isometry in the 
sense of Definition \ref{casp}. 
\end{prop}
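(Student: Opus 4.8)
The plan is to prove the stronger, pointwise statement that the two projections coincide on $V(n)$: for all $n \geq m \geq 1$ and all $u \in V(n)$, one has $\pi_{n,m}(u) = \tilde\pi_{n,m}(u)$. Once this is established, the equivalence in the proposition follows at once. Indeed, for a sequence $(u_n)_{n\geq 1}$ with $u_n \in V(n)$ for every $n$, the condition $\pi_{n+1,n}(u_{n+1}) = u_n$ of Definition \ref{casp} then coincides termwise with Neretin's condition $\tilde\pi_{n+1,n}(u_{n+1}) = u_n$; and since both families of maps satisfy the projective identity --- $\pi_{n,p} = \pi_{m,p}\circ\pi_{n,m}$ by Proposition \ref{projection}, and $\tilde\pi_{n,p} = \tilde\pi_{m,p}\circ\tilde\pi_{n,m}$ as recalled from \cite{Ner} --- each of these consecutive conditions is equivalent to the full coherence system over all levels $n \geq m$. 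Hence $(u_n)_{n\geq 1}$ is a virtual rotation in the sense of Neretin if and only if it is a virtual isometry in the sense of Definition \ref{casp}.

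To prove the coincidence of the projections I would invoke the uniqueness statement in Proposition \ref{projection}. Fix $u \in V(n)$, put $E = \Span(e_1,\dots,e_n)$ and $F = \Span(e_1,\dots,e_m)$, and decompose the matrix of $u$ with respect to $\CC^n = \CC^m \oplus \CC^{n-m}$ (with $\CC^m = F$ and $\CC^{n-m} = \Span(e_{m+1},\dots,e_n)$) into blocks $u = \left(\begin{smallmatrix} A & B \\ C & D\end{smallmatrix}\right)$ as in the introduction. By Neretin's computation recalled there, $1 - D$ is invertible and $w_0 := A + B(1-D)^{-1}C = \tilde\pi_{n,m}(u)$ belongs to $V(m) \subset U(m)$. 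Let $v$ be the unitary operator on $H$ acting as $w_0$ on $F$ and as the identity on $F^\perp$; this is, by definition, $\tilde\pi_{n,m}(u)$ under our identifications, and it fixes every element of $F^\perp$. So the only thing left to check is that the image of $H$ under $u - v$ is contained in $(u-\Id)(F^\perp)$ --- this is the second defining property of $\pi_{E,F}(u)$ --- for then Proposition \ref{projection} will force $v = \pi_{E,F}(u) = \pi_{n,m}(u)$.

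For this inclusion I would first observe that $u$ and $v$ both fix $e_k$ for $k > n$, so $u-v$ vanishes outside $E$; and since $u$ fixes $E^\perp$ and $E^\perp \subset F^\perp$, one has $(u-\Id)(F^\perp) = (u-\Id)(E \cap F^\perp) = \{\,(Bw,\,(D-\Id)w)\; :\; w \in \CC^{n-m}\,\}$ in block notation. Then for $x = (x_1,x_2) \in \CC^m \oplus \CC^{n-m}$, setting $w := x_2 - (1-D)^{-1}Cx_1$, a short block computation from $u(x) = (Ax_1 + Bx_2,\, Cx_1 + Dx_2)$ and $v(x) = (Ax_1 + B(1-D)^{-1}Cx_1,\, x_2)$ gives $(u-v)(x) = (Bw,\,(D-\Id)w) = (u-\Id)(0,w)$, which lies in $(u-\Id)(F^\perp)$. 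This single computation --- the one place where the explicit form of Neretin's projection enters --- is the entire technical content of the proof; the remaining points (identifying $U(n)$ and $U(m)$ with the appropriate operators on $H$, and passing between the consecutive and the all-levels versions of the coherence condition) are routine. Accordingly, the only real care needed is to arrange the block decomposition so that it is simultaneously compatible with the conventions of Proposition \ref{projection} and with those of \cite{Ner}; after that, verifying the two characterising properties of $\pi_{E,F}(u)$ is immediate.
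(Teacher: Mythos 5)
Your proof is correct and, like the paper, reduces the equivalence to showing that $\pi_{n,m}$ and $\tilde\pi_{n,m}$ agree on $V(n)$, but the mechanism you use to establish this coincidence is genuinely different. The paper works through Proposition \ref{projection2}: it forms the difference
$R = u_n - \bigl(\tilde\pi_{n,m}(u_n)\oplus \Id_{n-m}\bigr)$, factorizes it as the product of an $n\times(n-m)$ matrix and an $(n-m)\times n$ matrix, concludes $\rank R \le n-m$, and then invokes the minimality/uniqueness statement of Proposition \ref{projection2} together with the hypothesis that $1$ is not an eigenvalue of $u_n$ (which pins the minimal rank at exactly $n-m$). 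You instead return to the original defining characterization in Proposition \ref{projection} and verify both conditions directly: the operator $v$ extending $\tilde\pi_{n,m}(u)$ by the identity fixes $F^\perp$, and for every $x=(x_1,x_2)$, setting $w = x_2-(1-D)^{-1}Cx_1$ you compute $(u-v)(x) = (Bw,(D-\Id)w) = (u-\Id)(0,w)\in(u-\Id)(F^\perp)$. I checked the block algebra and it is right, including the identity $(D-\Id)w = Cx_1 + (D-\Id)x_2$. Your route has the small advantage of bypassing the metric characterization of Proposition \ref{projection2} entirely --- you need $u_n\in V(n)$ only to know that $(1-D)$ is invertible and that $\tilde\pi_{n,m}(u_n)$ is unitary, not to control a minimal rank --- and it exhibits the image of $u-v$ explicitly rather than merely bounding its dimension. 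The paper's rank-factorization, on the other hand, is a one-line computation that makes the bound $\rank R \le n-m$ especially transparent. Both are complete and correct.
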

\begin{proof}
It is sufficient to check that for $n \geq m \geq 1$, and $u_n \in V(n)$, $\pi_{n,m}(u_n) = \tilde{\pi}_{n,m} (u_n)$, 
where $\tilde{\pi}_{n,m}$ is defined as in the introduction. 
By Proposition \ref{projection2}, one deduces that it is sufficient to bound the 
rank of 
$$R := u_n - \left( \begin{array}{cc}
\tilde{\pi}_{n,m} (u_n) & 0  \\
0 & \Id_{n-m}   \end{array} \right) $$
by $n-m$, since one is not an eigenvalue of $u_n$. 
Now, if $u_n$ is divided into blocks of size $m \times m$, $m \times (n-m)$, $(n-m) \times m$, $(n-m) \times (n-m)$:
$$u_n = \left( \begin{array}{cc}
A& B  \\
C &D   \end{array} \right),$$
then
\begin{align*} R & = \left( \begin{array}{cc}
A& B  \\
C &D   \end{array} \right) - \left( \begin{array}{cc}
A+ B (1-D)^{-1}C & 0  \\ 0 & 1  \end{array} \right)
\\ & = \left( \begin{array}{cc}
B (D-1)^{-1}C& B  \\
C &D-1   \end{array} \right) = 
 \left( \begin{array}{c}
B (D-1)^{-1} \\
1   \end{array} \right) \left( \begin{array}{cc}
C & D-1   \end{array} \right). 
\end{align*}
In other words, $R$ is the product of a $n \times (n-m)$ matrix by a $(n-m) \times n$ matrix: its rank cannot be strictly larger than $n-m$.
\end{proof}
\noindent
Now, since the virtual isometries are the natural generalizations of the virtual permutations,
it is natural to ask if there is an analog of the Chinese restaurant process. The answer is
positive:
\begin{prop} \label{chinois}
Let $(x_n)_{n \geq 1}$ be a sequence of vectors, $x_n$ lying on the complex unit sphere of
$\mathbb{C}^n$ for all $n \geq 1$. Then, there exists a unique virtual isometry $(u_n)_{n \geq 1}$
such that $u_n(e_n) = x_n$ for all $n \geq 1$, and $u_n$ is given by
$$u_n = r_n r_{n-1}\dots r_1,$$
 where for $j \in \{1, \dots,n\}$, $r_j = \Id$ if $x_j = e_j$, and otherwise, $r_j$ is the unique
 reflection such that
 $r_j (e_n) = x_n$. Moreover, in the particular case where for all $n \geq 1$, $x_n = e_{i_n}$ for
 $i_n \in \{1, \dots, n\}$, then $(u_n)_{n \geq 1}$ is the sequence of matrices associated to a
virtual permutation $(\sigma_n)_{n \geq 1}$ constructed by the Chinese restaurant process: for all
$n \geq 1$,
$$\sigma_n = \tau_{n,i_n} \tau_{n-1, i_{n-1}} \dots \tau_{1,i_1},$$
where, for $j, k \in \{1,\dots,n\}$, $\tau_{j,k}=\Id$ if $j=k$ and $\tau_{j,k}$ is the transposition
$(j,k)$ if $j \neq k$.
\end{prop}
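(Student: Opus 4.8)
The plan is to exploit the remark following Proposition~\ref{projection}, which for the nested subspaces $\Span(e_1,\dots,e_{n+1})\supset\Span(e_1,\dots,e_n)$ with distinguished unit vector $e_{n+1}$ reads as follows: if $w$ is a unitary operator fixing $\Span(e_1,\dots,e_{n+1})^\perp$, then $\pi_{n+1,n}(w)=w$ when $w(e_{n+1})=e_{n+1}$, and $w=r\,\pi_{n+1,n}(w)$ otherwise, where $r$ is the unique reflection with $r(e_{n+1})=w(e_{n+1})$. With the convention $r_j=\Id$ when $x_j=e_j$, the operator $r_j$ of the statement is exactly the unique reflection sending $e_j$ to $x_j$ (uniqueness being recorded in that same remark), so everything reduces to passing back and forth between a virtual isometry $(u_n)$ and its associated sequence of reflections.

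For uniqueness I would argue by induction on $n$. The case $n=1$ is immediate: $u_1\in U(1)$ is a scalar of modulus one, determined by $u_1(e_1)=x_1$, and it equals $r_1$. For the step, if $(u_n)$ is any virtual isometry with $u_m(e_m)=x_m$ for all $m$, then $\pi_{n+1,n}(u_{n+1})=u_n$ and $u_{n+1}(e_{n+1})=x_{n+1}$, so the remark forces $u_{n+1}=r_{n+1}u_n$; combined with the induction hypothesis $u_n=r_n\cdots r_1$ this gives $u_{n+1}=r_{n+1}r_n\cdots r_1$, which is determined by $x_1,\dots,x_{n+1}$.

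For existence I would set $u_n:=r_n\cdots r_1$ and verify the two required properties. First, $u_n(e_n)=x_n$: each $r_j$ with $j<n$ fixes the orthogonal complement of its non-fixed direction $x_j-e_j\in\mathbb{C}^j$, which contains $e_n$, so $r_{n-1}\cdots r_1$ fixes $e_n$ and $u_n(e_n)=r_n(e_n)=x_n$; note also that $u_n$ fixes $\Span(e_1,\dots,e_n)^\perp$. Second, to show $(u_n)$ is a virtual isometry, i.e.\ $\pi_{n+1,n}(u_{n+1})=u_n$, I would check that $u_n$ satisfies the two conditions characterizing $\pi_{n+1,n}(u_{n+1})$ in Proposition~\ref{projection}: it fixes $\Span(e_1,\dots,e_n)^\perp$ by construction, and the image of $u_{n+1}-u_n=(r_{n+1}-\Id)u_n$ lies in the line $\mathbb{C}(x_{n+1}-e_{n+1})$, which is spanned by $(u_{n+1}-\Id)(e_{n+1})$ and hence is contained in the image of $\Span(e_1,\dots,e_n)^\perp$ under $u_{n+1}-\Id$. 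The uniqueness part of Proposition~\ref{projection} then yields $\pi_{n+1,n}(u_{n+1})=u_n$, and together with the previous step this also proves the claimed formula for every virtual isometry.

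Finally, for the permutation case, when $x_n=e_{i_n}$ with $i_n\le n$ I would identify $r_n$ with the transposition matrix $\tau_{n,i_n}$ (which is $\Id$ when $i_n=n$): indeed $\tau_{n,i_n}$ is unitary, $\tau_{n,i_n}-\Id$ has rank one with image $\mathbb{C}(e_{i_n}-e_n)$, and $\tau_{n,i_n}(e_n)=e_{i_n}$, so by uniqueness of the reflection $r_n=\tau_{n,i_n}$. Hence $u_n=\tau_{n,i_n}\cdots\tau_{1,i_1}$, i.e.\ $\sigma_n=\tau_{n,i_n}\cdots\tau_{1,i_1}$, and $(\sigma_n)$ is a virtual permutation because $(\Sigma_n)=(u_n)$ is a virtual isometry and, by the proposition identifying virtual permutations with virtual isometries of permutation matrices, these notions coincide. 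To recognize it as the Chinese restaurant construction I would observe that $\sigma_{n+1}=\tau_{n+1,i_{n+1}}\sigma_n$ with $\sigma_n$ fixing $n+1$, so either $i_{n+1}=n+1$ and $n+1$ becomes a new fixed point, or $i_{n+1}<n+1$ and $n+1$ is inserted into a cycle of $\sigma_n$. The only thing requiring care throughout is the bookkeeping with reflections --- matching the abstract definition (unitary, with $\Id$-defect of rank one) with the concrete operators $r_j$ and $\tau_{n,i_n}$ and invoking uniqueness of the reflection sending $e_j$ to $x_j$ --- together with the correct identification of images in the existence step; no genuinely hard estimate enters, since all the analytic content already lies in Propositions~\ref{projection} and~\ref{projection2}.
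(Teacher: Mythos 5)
Your proof is correct and follows essentially the same route as the paper's: the inductive step reduces, via the remark after Proposition \ref{projection}, to $u_{n+1} = r_{n+1}(u_n \oplus 1)$, and the permutation case is read off by identifying $r_n$ with the transposition $\tau_{n,i_n}$. One small point you should make explicit, as the paper does, is the degenerate case $x_{n+1}=e_{n+1}$, to which the remark does not literally apply; but there $u_{n+1}$ already fixes $\Span(e_1,\dots,e_n)^{\perp}$, hence equals its own projection, so the formula holds with $r_{n+1}=\Id$.
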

\begin{proof}
One has $u_1(e_1) = x_1$ if and only if $u_1 = x_1$, which is equal to $r_1$. For all $n \geq 1$,
two cases are possible:
\begin{itemize}
\item if $x_{n+1} = e_{n+1}$, then $\pi_{n+1,n}(u_{n+1}) = u_n$ and $u_{n+1}(e_{n+1})= e_{n+1}$ if and
only if $u_{n+1} = (u_{n}) \oplus 1$, where the symbol $\oplus$ denotes diagonal blocks of matrices;
\item if $x_{n+1} \neq e_{n+1}$, the equation \eqref{v} and the remark after the proof of
Proposition \ref{projection} imply that
$\pi_{n+1,n}(u_{n+1}) = u_n$ and $u_{n+1}(e_{n+1})=x_{n+1}$ if and only if $u_{n+1} = r_{n+1} \,
(u_n \oplus 1)$.
\end{itemize}
\noindent
By induction, the uniqueness and the general form of $u_n$ is proven. If $x_n = e_{i_n}$ for all $n \geq 1$,
$r_n$ is the matrix of the permutation $\tau_{n,i_n}$, which easily implies the second part of
Proposition \ref{chinois}.
\end{proof}
\noindent
The construction given in Proposition \ref{chinois} implies in particular that the space $U^{\infty}$ is
 not empty. Moreover, it is possible to use it to define probability measures on this space.

\section{Some remarkable measures on $U^{\infty}$} \label{Haar2}
Once the space $U^{\infty}$ is constructed, it is natural to ask if there exists an analog of the
Haar measure on this space. As seen in the introduction, the positive answer can be deduced from the results given 
in \cite{BO}, \cite{Ner} and \cite{OV}, and of the fact that under Haar measure on $U(n)$, almost every matrix is in $V(n)$. 
A more direct proof can be easily deduced from the results given by Bourgade, Nikeghbali and Rouault in 
 \cite{BNR}:
\begin{prop} \label{H}
Let $(x_n)_{n \geq 1}$ be a random sequence of vectors, $x_n$ lying on the complex unit sphere of
$\mathbb{C}^n$ for all $n \geq 1$, and let $(u_n)_{n \geq 1}$ be the unique virtual isometry
such that $u_n(e_n)= x_n$ for all $n \geq 1$. Then, for each $n$, the matrix $u_n$ follows the
Haar measure on $U(n)$ if and only if $x_1,\dots,x_n$ are independent and for all $j \in \llbracket 1,n\rrbracket$,
$x_j$ follows uniform measure on the complex unit sphere of $\mathbb{C}^n$.
\end{prop}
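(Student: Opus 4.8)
The plan is to argue by induction on $n$, combining the explicit recursion $u_n = r_n\,(u_{n-1}\oplus 1)$ established in Proposition \ref{chinois} (where $r_n$ is the reflection sending $e_n$ to $x_n$, and $r_n=\Id$ on the null event $\{x_n=e_n\}$) with the disintegration of the Haar measure along the fibration $g\mapsto g(e_n)$ from $U(n)$ onto the unit sphere of $\CC^n$. The fact I would take from \cite{BNR} --- classical in any case --- is that a random $u\in U(n)$ is Haar distributed if and only if $u(e_n)$ is uniform on the unit sphere of $\CC^n$ and, conditionally on the value of $u(e_n)$, the matrix $s(u(e_n))^{-1}u$ is Haar distributed on the stabilizer $\{h\in U(n):h(e_n)=e_n\}$, canonically identified with $U(n-1)$; here $s$ is any fixed measurable section of $g\mapsto g(e_n)$, and we take $s(v)$ to be the reflection sending $e_n$ to $v$ (so that $s$ is rational in $v$ off the null point $v=e_n$). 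With this choice $s(x_n)=r_n$, so the reduced matrix is exactly $s(u_n(e_n))^{-1}u_n = u_{n-1}\oplus 1$.

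First, the implication from independence and uniformity of the $x_j$ to the Haar property of $u_n$. The base case $n=1$ is immediate, since $u_1=x_1\in U(1)$ and the Haar measure on $U(1)$ is uniform on the circle. Assume the statement at rank $n-1$. If $x_1,\dots,x_n$ are independent with $x_j$ uniform on the unit sphere of $\CC^j$, then $u_{n-1}$ is a function of $x_1,\dots,x_{n-1}$ only, hence independent of $x_n$, and it is Haar distributed on $U(n-1)$ by the induction hypothesis; since $u_n(e_n)=x_n$ is uniform on the unit sphere of $\CC^n$, the disintegration statement above yields that $u_n$ is Haar distributed on $U(n)$.

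Conversely, assume $u_n$ is Haar distributed. Then $x_n=u_n(e_n)$ is uniform on the unit sphere of $\CC^n$, and by the disintegration $u_{n-1}\oplus 1 = s(x_n)^{-1}u_n$ is Haar distributed on $U(n-1)$ and independent of $x_n$; in particular $u_{n-1}$ is Haar distributed (so the hypothesis automatically propagates downward). The remaining point is that the $\sigma$-algebra generated by $u_{n-1}$ equals the one generated by $(x_1,\dots,x_{n-1})$: one inclusion is Proposition \ref{chinois}, and for the other $x_{n-1}=u_{n-1}(e_{n-1})$, then $u_{n-2}=\pi_{n-1,n-2}(u_{n-1})$, then $x_{n-2}=u_{n-2}(e_{n-2})$, and so on down to $x_1$, recovers the whole tuple measurably from $u_{n-1}$. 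Hence $(x_1,\dots,x_{n-1})$ is independent of $x_n$, and by the induction hypothesis $x_1,\dots,x_{n-1}$ are independent with $x_j$ uniform on the sphere of $\CC^j$; together with the law of $x_n$ and its independence of the rest, this shows $x_1,\dots,x_n$ are independent and uniformly distributed, as required.

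I expect the only delicate point to be bookkeeping rather than conceptual: making precise the measurable section $s$ and the attendant conditional independence in the disintegration of the Haar measure (the content borrowed from \cite{BNR}), and, in the converse direction, checking that independence of $u_{n-1}$ from $x_n$ genuinely upgrades to independence of the full tuple $(x_1,\dots,x_{n-1})$ from $x_n$ --- which is exactly what the $\sigma$-algebra identity above delivers. The degenerate events $\{x_j=e_j\}$, on which $r_j$ collapses to the identity, have probability zero under the uniform measure and can be disregarded throughout.
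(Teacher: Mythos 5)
Your proof is correct. The paper does not give its own argument for Proposition \ref{H} but delegates it to \cite{BNR}, whose reasoning is precisely the combination you spell out: the reflection decomposition $u_n = r_n\,(u_{n-1}\oplus 1)$ from Proposition \ref{chinois} together with the disintegration of Haar measure on $U(n)$ along $g\mapsto g(e_n)$; so your route is essentially the one the paper points to.
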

\noindent
This result has the following consequence, showing the compatibility between the Haar measure on
 $U(n)$, $n \geq 1$ and the projections $\pi_{n,m}$, $n \geq m \geq 1$.
\begin{prop} \label{Haar}
For all $n \geq m \geq 1$, the image of the Haar measure on $U(n)$ by the projection $\pi_{n,m}$ is equal to
the Haar measure on $U(m)$.
\end{prop}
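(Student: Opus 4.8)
The plan is to obtain Proposition~\ref{Haar} as a direct consequence of Proposition~\ref{H} together with the coherence built into the notion of virtual isometry. Fix $n \geq m \geq 1$ and realize the Haar measure on $U(n)$ through the construction of Proposition~\ref{chinois}: on an auxiliary probability space take a sequence $(x_j)_{j \geq 1}$ of independent random vectors, $x_j$ uniform on the complex unit sphere of $\mathbb{C}^j$, and let $(u_j)_{j \geq 1}$ be the associated virtual isometry, so that $u_j = r_j r_{j-1} \cdots r_1$ with $r_j$ the reflection (or the identity) determined by $x_j$. By the "if" direction of Proposition~\ref{H} applied in dimension $n$, the matrix $u_n$ follows the Haar measure on $U(n)$; hence the image of the Haar measure on $U(n)$ under $\pi_{n,m}$ is exactly the law of $\pi_{n,m}(u_n)$.

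Next I would use that $(u_j)_{j\geq 1}$ is a genuine virtual isometry, so that by Definition~\ref{casp} one has $\pi_{n,m}(u_n) = u_m$. The key observation is that $u_m = r_m r_{m-1} \cdots r_1$ depends only on $x_1, \dots, x_m$, and this sub-family is again a family of independent vectors with $x_j$ uniform on the complex unit sphere of $\mathbb{C}^j$. Applying the "if" direction of Proposition~\ref{H} once more, this time in dimension $m$, shows that $u_m$ follows the Haar measure on $U(m)$. Combining the two facts, the pushforward of the Haar measure on $U(n)$ by $\pi_{n,m}$ equals the Haar measure on $U(m)$, which is the claim.

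I do not expect a real obstacle here; the only point to state with some care is the identity $\pi_{n,m}(u_n) = u_m$ together with the remark that the truncated product $r_m \cdots r_1$ does not involve $x_{m+1}, x_{m+2}, \dots$, both of which are immediate from Proposition~\ref{chinois}. If one prefers a route avoiding Proposition~\ref{H}, an alternative is to reduce to the case $m = n-1$ via the factorization $\pi_{n,m} = \pi_{m+1,m} \circ \cdots \circ \pi_{n,n-1}$ and to check, directly from the uniqueness characterization in Proposition~\ref{projection}, that $\pi_{n,n-1}$ satisfies $\pi_{n,n-1}(g u h) = g\, \pi_{n,n-1}(u)\, h$ for all $g,h$ in the copy of $U(n-1)$ inside $U(n)$ fixing $e_n$ (both sides fix the span of $e_n, e_{n+1}, \dots$ and have image inside $(guh - \mathrm{Id})$ of that span); the pushforward of the Haar measure on $U(n)$ is then a left-invariant probability measure on $U(n-1)$, hence the Haar measure, and one iterates.
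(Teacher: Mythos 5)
Your main argument is exactly the paper's: Proposition~\ref{Haar} is stated there as an immediate consequence of Proposition~\ref{H} (via Proposition~\ref{chinois} and the coherence property $\pi_{n,m}(u_n)=u_m$), and your alternative route essentially reproduces the direct argument the paper gives in the remark following the proposition, namely $\pi_{n+1,n}(u(a\oplus 1))=\pi_{n+1,n}(u)\,a$ combined with the invariance characterization of Haar measure. Both of your derivations are correct, and there is no meaningful difference from what the paper does.
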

\begin{rem}
The statement given in Proposition \ref{Haar} is meaningful only if the application $\pi_{n,m}$ is
measurable with respect to the Borel $\sigma$-algebras of $U(n)$ and $U(m)$. This fact can be easily checked
by using the formula \eqref{v}. Moreover, Proposition \ref{Haar} can be easily proven
 directly. Indeed, let $n \geq 1$,
let $u$ be a matrix on $U(n+1)$ following the Haar measure, and let $a$ be a deterministic matrix
on $U(n)$. The invariance of the Haar measure implies that $u(a \oplus 1)$ follows the Haar measure
on $U(n+1)$, and then has the same law as $u$. Now, it is easy to check that $\pi_{n+1,n}( u(a \oplus 1) )
 =\pi_{n+1,n}(u)\, a$, hence, $\pi_{n+1,n}(u) \, a$ has the same law as $\pi_{n+1,n}(u)$ for all $a \in U(n)$.
One deduces that $\pi_{n+1,n} (u)$ follows the Haar measure on $U(n)$.
\end{rem}
\noindent
The property of compatibility given in Proposition \ref{Haar} implies
the possibility to define the Haar measure on $U^{\infty}$. In order to do this properly, let us prove the
following result, about the extension of measures:
\begin{prop} \label{extension}
Let $\mathcal{U}$ be the $\sigma$-algebra on $U^{\infty}$, generated by the sets:
$$\{(u_n)_{n \geq 1}, u_k \in B_k \},$$
for all $k \geq 1$ and for all Borel sets $B_k$ in $U(k)$. Let $(\mu_n)_{n \geq 1}$ be a
family of probability measures, $\mu_n$ defined on the space $U(n)$ (endowed with its Borel $\sigma$-algebra),
and such that the image of $\mu_{n+1}$ by $\pi_{n+1,n}$ is equal to $\mu_n$ for all $n \geq 1$. Then,
there exists a unique probability measure on $(U^{\infty}, \mathcal{U})$ such that its image by the
$n$-th coordinate is equal to $\mu_n$ for all $n \geq 1$.
\end{prop}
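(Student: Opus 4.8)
The plan is to transfer the problem, via the Chinese restaurant construction of Proposition~\ref{chinois}, to a genuine countable product of compact metric spaces, where the classical Kolmogorov extension theorem applies, and then to carry the resulting measure back to $U^\infty$.

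For $n\geq 1$ write $\mathbb{S}_n$ for the unit sphere of $\mathbb{C}^n$, a compact metric space. By Proposition~\ref{chinois} the map $\Phi$ sending $(x_n)_{n\geq 1}\in\prod_{n\geq 1}\mathbb{S}_n$ to the unique virtual isometry $(u_n)_{n\geq 1}$ with $u_n(e_n)=x_n$ for all $n$ is a bijection onto $U^\infty$; the same inductive argument gives, for each fixed $n$, a bijection $\Phi_n\colon\prod_{j=1}^n\mathbb{S}_j\to U(n)$, $(x_1,\dots,x_n)\mapsto r_n\cdots r_1$, with inverse $u\mapsto(\pi_{n,1}(u)(e_1),\dots,\pi_{n,n-1}(u)(e_{n-1}),u(e_n))$. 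First I would check that these bijections are bi-measurable. In one direction, $u_n=r_n\cdots r_1$ is a Borel function of $(x_1,\dots,x_n)$, since each $r_j$ is the unique reflection with $r_j(e_j)=x_j$, an explicit rational (hence continuous) function of $x_j$ off the closed set $\{x_j=e_j\}$ by formula~\eqref{v} and the remark following Proposition~\ref{projection}; in the other direction, $x_j=\pi_{n,j}(u_n)(e_j)$ is a Borel function of $u_n$ because each $\pi_{n,j}$ is Borel (the Remark after Proposition~\ref{Haar}). Since $x_j=u_j(e_j)$ and conversely $u_k$ is a Borel function of $(x_1,\dots,x_k)$, the $\sigma$-algebra $\mathcal{U}$ generated by the coordinates $u_k$ coincides with the one generated by the coordinates $x_k$; that is, $\Phi$ is an isomorphism of measurable spaces from $\bigl(\prod_n\mathbb{S}_n,\bigotimes_n\mathcal{B}(\mathbb{S}_n)\bigr)$ onto $(U^\infty,\mathcal{U})$.

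Next I would translate the data. The essential point is that for every $n$ the diagram relating $\Phi_{n+1}$, $\Phi_n$, the coordinate-forgetting map $\prod_{j=1}^{n+1}\mathbb{S}_j\to\prod_{j=1}^n\mathbb{S}_j$ and $\pi_{n+1,n}$ commutes --- this is precisely the identity $\pi_{n+1,n}(r_{n+1}\cdots r_1)=r_n\cdots r_1$ contained in Proposition~\ref{chinois}. Setting $\nu_n:=(\Phi_n^{-1})_*\mu_n$, a probability measure on $\prod_{j=1}^n\mathbb{S}_j$, the hypothesis $(\pi_{n+1,n})_*\mu_{n+1}=\mu_n$ becomes, through this commuting diagram, the statement that $\nu_n$ is the image of $\nu_{n+1}$ under the coordinate-forgetting map. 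Thus $(\nu_n)_{n\geq 1}$ is a consistent projective family of finite-dimensional laws on the countable product of the compact metric (hence Polish) spaces $\mathbb{S}_n$, so the Kolmogorov extension theorem yields a unique probability measure $\nu$ on $\bigl(\prod_n\mathbb{S}_n,\bigotimes_n\mathcal{B}(\mathbb{S}_n)\bigr)$ with $(\mathrm{proj}_{\leq n})_*\nu=\nu_n$ for all $n$. I would then put $\mu:=\Phi_*\nu$ on $(U^\infty,\mathcal{U})$; since the $n$-th coordinate on $U^\infty$ equals $\Phi_n\circ\mathrm{proj}_{\leq n}\circ\Phi^{-1}$, the image of $\mu$ under it is $(\Phi_n)_*\nu_n=\mu_n$, as required. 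Uniqueness follows either by transporting the uniqueness clause of Kolmogorov's theorem back through $\Phi$, or directly: the finite intersections $\{u_{k_1}\in B_1,\dots,u_{k_r}\in B_r\}$ with $k_1<\dots<k_r$ form a $\pi$-system generating $\mathcal{U}$, and any admissible measure must assign to such a set the value $\mu_{k_r}\bigl(B_r\cap\bigcap_{i<r}\pi_{k_r,k_i}^{-1}(B_i)\bigr)$, so two admissible measures agree on a generating $\pi$-system and therefore coincide by the Dynkin ($\pi$--$\lambda$) lemma.

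The only genuine work lies in the bi-measurability step of the second paragraph: in particular one must observe that $\pi_{n+1,n}$ is merely Borel, not continuous --- formula~\eqref{v} degenerates as $u(e_{n+1})\to e_{n+1}$ --- so $U^\infty$ cannot be treated as a closed subset of the compact product $\prod_n U(n)$, which forces one to pass to the sphere coordinates. Once the measurable isomorphism $\Phi$ is in place, the extension itself is a black-box application of Kolmogorov's theorem.
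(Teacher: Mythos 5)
Your proof is correct, but it takes a route genuinely different from the paper's. The paper works directly on the product $V=\prod_{n\geq 1}U(n)$: it pushes $\mu_n$ forward by the map $u\mapsto(\pi_{n,1}(u),\dots,\pi_{n,n}(u))$ to get consistent measures $\tilde\mu_n$ on $U(1)\times\dots\times U(n)$, invokes the Daniell--Kolmogorov theorem to obtain $\tilde\mu$ on $V$, and then observes that $\tilde\mu$ is supported on $U^\infty$, which is a measurable (though, as you note, not closed) subset of $V$ because each coherence condition $\pi_{n+1,n}(u_{n+1})=u_n$ is a Borel constraint. You instead pass through the Chinese-restaurant coordinates $\prod_n\mathbb{S}_n$, build a measurable isomorphism $\Phi$, apply Kolmogorov there, and transfer back. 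Both applications of Kolmogorov's theorem are legitimate since $U(n)$ and $\mathbb{S}_n$ are compact Polish, and your bi-measurability check of $\Phi$ is the genuine extra work your route requires, roughly compensating for the step the paper needs, namely that $U^\infty\in\mathcal{V}$. Your uniqueness argument via the $\pi$-system $\{u_{k_1}\in B_1,\dots,u_{k_r}\in B_r\}$ and Dynkin's lemma is the same as the paper's monotone-class argument, and both hinge on the same observation that $u_p$ is a Borel function of $u_m$ for $m\geq p$. One small correction to your closing remark: the non-continuity of $\pi_{n+1,n}$ does \emph{not} force the passage to sphere coordinates. Measurability of $\pi_{n+1,n}$ already makes $U^\infty$ a Borel subset of $\prod_n U(n)$, which is all the paper's argument needs; the sphere coordinates are a convenience, not a necessity. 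What the sphere coordinates do buy you is that the product $\prod_n\mathbb{S}_n$ \emph{is} the whole state space rather than a measurable subset of it, which makes the Kolmogorov step a genuinely black-box application, at the cost of verifying that $\Phi$ is a measurable isomorphism.
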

\begin{proof}
For any element $(u_n)_{n \geq 1}$ in $U^{\infty}$, $u_p$ can be expressed
as a Borel function of $u_m$ for all $m \geq p \geq 1$. One deduces that the family of sets
of the form $$\{(u_n)_{n \geq 1}, u_k \in B_k \}$$ is stable by finite intersection. This implies the
uniqueness part of Proposition \ref{extension}, by the monotone class theorem. In order to prove the existence,
let us consider the product $V$ of all the unitary groups $U(n)$, $n \geq 1$, endowed with
the product $\mathcal{V}$ of their Borel $\sigma$-algebras.
For all $n \geq 1$, let
us define the measure $\tilde{\mu}_n$ on the space $U(1) \times \, \dots \, \times U(n)$, endowed
with the corresponding product of Borel $\sigma$-algebras, as the image
of $\mu_n$ by the map:
$$u_n \mapsto (\pi_{n,1}(u_n),\dots,\pi_{n,n-1}(u_n), \pi_{n,n}(u_n)),$$
from $U(n)$ to $U(1) \times \, \dots \, \times U(n)$.
The projective property of $\pi_{m,p}$, $m \geq p \geq 1$ and the fact that $\mu_{n}$ is the image
of $\mu_{n+1}$ by $\pi_{n+1,n}$ implies that for all $n \geq 1$, the restriction of $\tilde{\mu}_{n+1}$ to
the $n$ first coordinates is equal to $\tilde{\mu}_n$. The classical theorem of extension
of probability measures implies that there exists a measure $\tilde{\mu}$ on $(V,\mathcal{V})$
such that its restriction to the $n$ first coordinates is equal to $\tilde{\mu}_n$, for all $n \geq 1$.
By construction, $\tilde{\mu}$ is carried by the set $U^{\infty}$, which is in $\mathcal{V}$, hence, it induces a measure on
$U^{\infty}$, endowed with the intersection of $\mathcal{V}$ and $\mathcal{P}(U^{\infty})$, which is equal to $\mathcal{U}$.
\end{proof}
\noindent
By combining the Propositions \ref{H} and \ref{extension}, one deduces the existence of the
Haar measure on virtual isometries:
\begin{prop} \label{Haarvi}
There exists a unique probability measure $\mu^{(0)}$ on the space $(U^{\infty}, \mathcal{U})$ such that
its image by all the coordinate maps are equal to the Haar measure on the corresponding unitary group.
This measure can be described as follows. Let $(x_n)_{n \geq 1}$ be a random sequence of
 vectors, $x_n$ lying on the complex unit sphere of
$\mathbb{C}^n$ for all $n \geq 1$, and let $(u_n)_{n \geq 1}$ be the unique virtual isometry
such that $u_n(e_n)= x_n$ for all $n \geq 1$. Then, the distribution of $(u_n)_{n \geq 1}$
is equal to $\mu^{(0)}$ if and only if $(x_n)_{n \geq 1}$ are independent, and for all $n \geq 1$,
$x_n$ follows the uniform measure on the complex unit sphere of $\mathbb{C}^n$.
\end{prop}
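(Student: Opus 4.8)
The plan is to deduce the proposition by assembling Propositions \ref{Haar}, \ref{extension} and \ref{H} together with the bijective, bimeasurable correspondence between sequences $(x_n)_{n\geq 1}$ of unit vectors and virtual isometries furnished by Proposition \ref{chinois}; all the substantive work is already contained in those statements, so what is left is essentially bookkeeping about coherent families of measures.

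First I would settle existence and uniqueness of $\mu^{(0)}$. By Proposition \ref{Haar}, for every $n\geq 1$ the image of the Haar measure on $U(n+1)$ under $\pi_{n+1,n}$ is the Haar measure on $U(n)$, so the family of Haar measures on the groups $U(n)$, $n\geq 1$, meets the hypothesis of Proposition \ref{extension}. That proposition then produces a unique probability measure $\mu^{(0)}$ on $(U^{\infty},\mathcal{U})$ whose image under the $n$-th coordinate map is the Haar measure on $U(n)$ for every $n$, which is exactly the first assertion.

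Next I would prove the description in terms of the vectors $x_n$. By Proposition \ref{chinois}, the map $\Phi$ sending a sequence $(x_n)_{n\geq 1}$, with $x_n$ on the complex unit sphere of $\mathbb{C}^n$, to the unique virtual isometry $(u_n)_{n\geq 1}$ with $u_n(e_n)=x_n$ is a bijection onto $U^{\infty}$, with inverse $(u_n)_{n\geq 1}\mapsto(u_n(e_n))_{n\geq 1}$; both directions are measurable, since the formula $u_n=r_n\cdots r_1$ of Proposition \ref{chinois} exhibits $u_n$ as a measurable function of $x_1,\dots,x_n$ (each reflection $r_j$ depending measurably on $x_j$ through \eqref{v}), while $x_k=\pi_{n,k}(u_n)(e_k)$ for $1\leq k\leq n$ exhibits $(x_1,\dots,x_n)$ as a measurable function of $u_n$. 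For the implication $\Leftarrow$, assume the $x_n$ are independent with $x_n$ uniform on the sphere of $\mathbb{C}^n$; then for each fixed $n$ the vectors $x_1,\dots,x_n$ are independent and uniform, so Proposition \ref{H} gives that $u_n$ is Haar-distributed on $U(n)$. Hence the law of the random element $(u_n)_{n\geq 1}=\Phi((x_n)_{n\geq 1})$ of $(U^{\infty},\mathcal{U})$ has $n$-th coordinate marginal equal to the Haar measure on $U(n)$ for every $n$, and therefore coincides with $\mu^{(0)}$ by the uniqueness part of Proposition \ref{extension}. For the implication $\Rightarrow$, assume $(u_n)_{n\geq 1}$ has law $\mu^{(0)}$; then $u_n$ is Haar-distributed on $U(n)$ for every $n$, and since $(x_1,\dots,x_n)$ is a measurable function of $u_n$, Proposition \ref{H} shows for each $n$ that $x_1,\dots,x_n$ are independent with $x_j$ uniform on the sphere of $\mathbb{C}^j$; letting $n$ grow gives the same conclusion for the whole sequence $(x_n)_{n\geq 1}$.

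I do not expect a genuine obstacle. The only points requiring care are the measurability of $\Phi$ and of its inverse, which is handled by the explicit reflection formula \eqref{v} and by the measurability of the projections $\pi_{n,k}$, and the reading of the equivalence in Proposition \ref{H} as holding for each $n$ separately, which is what allows independence and uniformity of every finite initial segment of $(x_n)_{n\geq 1}$ to be upgraded to the corresponding statement about the full sequence.
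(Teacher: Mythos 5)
Your proof is correct and is precisely the argument the paper has in mind: the paper simply states that Proposition \ref{Haarvi} follows "by combining the Propositions \ref{H} and \ref{extension}," and your write-up fills in exactly those details (coherence via Proposition \ref{Haar}, existence/uniqueness via Proposition \ref{extension}, and the two directions of the description via Proposition \ref{H} together with the bimeasurable correspondence of Proposition \ref{chinois}). Nothing to add.
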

\noindent
The Haar measure $\mu^{(0)}$ is the
analog of the uniform measure on virtual permutations, which can be obtained in the setting of Proposition
\ref{Haarvi}, by taking $(x_n)_{n \geq 1}$ independent, $x_n$ uniform on the
finite set $\{e_1,\dots,e_n\}$. Moreover, it is possible to generalize the Haar measure on virtual isometries,
in the same way as uniform measure on virtual permutations can be generalized by considering the
Ewens measures. Enouncing this generalization requires the so-called $h$-sampling (or
$h$-transform), which can be
described as follows. Let $(X,\mathcal{F},\mu)$ be a probability space. For a given measurable
function $h:X\mapsto \RR^+$ such that $0<\E_\mu(h)<\infty$, a probability measure $\mu'$
is said to be the $h$-sampling of $\mu$ if and only if for all bounded measurable
functions $f$,
$$
\E_{\mu'}(f)=\frac{\E_\mu(f\,h)}{\E_\mu(h)}.
$$
Here, for all $n \geq 1$, and for $\delta \in \mathbb{C}$ such that $\Re(\delta) > -1/2$, it is possible
 to define a probability measure $\nu_{\delta}^{(n)}$ as the $h$-sampling of the uniform measure
on the complex unit sphere, for
\begin{equation}
h(x)=(1-\langle e_n,x\rangle)^{\overline{\delta}}(1-\overline{\langle e_n,x\rangle})^{\delta}, \label{hhp}
\end{equation}
where the imaginary part of the logarithm of $1-\langle e_n,x\rangle$ is taken in the
interval $(-\pi/2,\pi/2)$. Then, in \cite{BNR}, Bourgade, Nikeghbali and Rouault have essentially
 proven the following result:
\begin{prop} \label{Ewens}
 Let $(x_n)_{n \geq 1}$ be a random sequence of
 independent vectors such that for all $n \geq 1$, $x_n$ follows the distribution
 $\nu_{\delta}^{(n)}$ on the complex unit sphere of
$\mathbb{C}^n$ for all $n \geq 1$, and let $(u_n)_{n \geq 1}$ be the unique virtual isometry
such that $u_n(e_n)= x_n$ for all $n \geq 1$. Then, for all $n \geq 1$, the distribution of
 $u_n$ can be described as the $h$-sampling of the Haar measure on $U(n)$, where the function $h$
is given by
$$
h(u)=\det(\Id-u)^{\bar{\delta}}\det(\Id-\overline{u})^{\delta},
$$
where the logarithm of $\det(\Id -u)$ is taken in the unique way such that it is continuous
on the connected set $\{u \in U(n), \det(\Id-u) \neq 0\}$, and real (equal to $n \log(2)$) for $u = -\Id$.
This construction determines a measure $\mu^{(\delta)}$ on the space $(U^{\infty}, \mathcal{U})$,
 which can be identified with the Hua-Pickrell measure $\tilde{\mu}^{(\delta)}$ given in the introduction.
\end{prop}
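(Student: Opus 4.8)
The plan is to reduce Proposition~\ref{Ewens} to a recursive identity for the $h$-sampled Haar measures on finite unitary groups, and then invoke Proposition~\ref{chinois} together with an induction. First I would recall from Proposition~\ref{chinois} that the unique virtual isometry with $u_n(e_n)=x_n$ satisfies $u_{n+1}=r_{n+1}\,(u_n\oplus 1)$, where $r_{n+1}$ is the reflection sending $e_{n+1}$ to $x_{n+1}$ (or the identity if $x_{n+1}=e_{n+1}$). Since $x_{n+1}$ is independent of $x_1,\dots,x_n$ and hence of $u_n$, the conditional law of $u_{n+1}$ given $u_n$ is the law of $r\,(u_n\oplus 1)$ with $r$ the reflection associated to a vector drawn from $\nu_\delta^{(n+1)}$, independently. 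So it suffices to show, by induction on $n$, that if $u_n$ follows the $h$-sampling of Haar measure on $U(n)$ with $h(u)=\det(\Id-u)^{\bar\delta}\det(\Id-\bar u)^{\delta}$, then $r\,(u_n\oplus 1)$ follows the analogous $h$-sampling on $U(n+1)$. The base case $n=1$ is a direct computation: on $U(1)$ the Haar measure is the uniform measure on the unit circle, $\nu_\delta^{(1)}$ is precisely its $h$-sampling by $(1-z)^{\bar\delta}(1-\bar z)^\delta$, and $u_1=x_1$.

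For the inductive step I would compute the density of $r\,(a\oplus 1)$ when $a$ runs over the $h$-sampled Haar measure on $U(n)$ and $r$ over the reflection associated to an independent $\nu_\delta^{(n+1)}$-distributed vector. The key algebraic fact I would isolate is a factorization of the characteristic polynomial: for $v=r\,(a\oplus 1)$ one should have a relation of the form
\begin{equation*}
\det(\Id_{n+1}-v)=\det(\Id_n-a)\cdot\bigl(1-\langle e_{n+1},x_{n+1}\rangle\bigr)\cdot(\text{unimodular factor}),
\end{equation*}
coming from the rank-one structure of $r-\Id$; I would establish this by expanding $\det(\Id-v)$ along the last row/column using formula~\eqref{v}. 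This is exactly the point where the weight $h(u)=\det(\Id-u)^{\bar\delta}\det(\Id-\bar u)^\delta$ multiplies nicely: the $U(n)$-weight times the sphere weight $(1-\langle e_{n+1},x\rangle)^{\bar\delta}(1-\overline{\langle e_{n+1},x\rangle})^\delta$ from~\eqref{hhp} reassembles into the $U(n+1)$-weight, up to the normalization constants, provided the branches of the logarithms are consistent—which is why the normalizations of $\log\det(\Id-u)$ (real at $u=-\Id$) and of $\log(1-\langle e_n,x\rangle)$ (imaginary part in $(-\pi/2,\pi/2)$) were fixed as they were. The remaining ingredient is the change-of-variables/Weyl-integration bookkeeping showing that pushing the product (Haar on $U(n)$) $\otimes$ (uniform on the sphere) forward by $(a,x)\mapsto r_x(a\oplus1)$ gives Haar on $U(n+1)$; this is the content of Proposition~\ref{H} in the case $\delta=0$, so I can cite it and only need to track how the two density factors transform.

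Finally I would identify the resulting measure $\mu^{(\delta)}$ on $U^\infty$ with the Hua-Pickrell measure $\tilde\mu^{(\delta)}$ from the introduction: by construction the $n$-th projection of $\mu^{(\delta)}$ is the $h$-sampling of Haar on $U(n)$ by $\det(\Id-u)^{\bar\delta}\det(\Id-\bar u)^\delta$, which is literally the formula $\tilde\mu^{(\delta,n)}(du)=c'_{\delta,n}\det(1-u)^{\bar\delta}\det(1-\bar u)^\delta\,du$ given there; coherence of $\mu^{(\delta)}$ under the $\pi_{n+1,n}$ follows from Proposition~\ref{extension} once the compatibility of these finite-dimensional measures is known, and that compatibility is itself a consequence of the inductive step above (the law of $\pi_{n+1,n}(u_{n+1})=u_n$ is the prescribed one). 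I expect the main obstacle to be the careful verification of the characteristic-polynomial factorization and, more delicately, checking that the prescribed branch choices make the identity $h_{n+1}(r_x(a\oplus1))=h_n(a)\cdot h^{\mathrm{sphere}}(x)$ hold without a spurious phase—this is where signs and winding numbers of the determinant as one moves around $U(n+1)$ have to be controlled. The rest is routine: the reduction via Proposition~\ref{chinois}, the induction skeleton, and the citation of Proposition~\ref{H} for the $\delta=0$ change of variables.
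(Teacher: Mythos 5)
The paper does not prove Proposition~\ref{Ewens}; it is attributed to \cite{BNR} (``Bourgade, Nikeghbali and Rouault have essentially proven the following result''), so there is no internal proof to compare against. Judged on its own, your proposal is correct and is in fact the natural argument. A few remarks.

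Your key claimed factorization
\[
\det(\Id_{n+1}-u_{n+1})=\det(\Id_n-u_n)\,\bigl(1-\langle e_{n+1},x_{n+1}\rangle\bigr)\cdot(\text{unimodular factor})
\]
holds \emph{exactly, with trivial unimodular factor}. You do not need to expand along the last row by hand: Proposition~\ref{characteristic} evaluated at $z=1$ gives
\[
P_{n+1}(1)=\frac{P_n(1)}{\overline{\nu_n}-1}\,(1-\nu_n)(\overline{\nu_n}-1)=P_n(1)(1-\nu_n),
\]
i.e.\ $\det(\Id-u_{n+1})=\det(\Id-u_n)\,(1-\nu_n)$, where $\nu_n=\langle e_{n+1},x_{n+1}\rangle$ is exactly the quantity entering the sphere weight \eqref{hhp}. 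This removes the ``spurious phase'' worry you raised. With this, the weights multiply as $h_{n+1}(u_{n+1})=h_n(u_n)\cdot(1-\nu_n)^{\bar\delta}(1-\overline{\nu_n})^{\delta}$, and the branch consistency is automatic because $\Re(1-\nu_n)\ge 0$ (so the principal branch of $\log(1-\nu_n)$, imaginary part in $(-\pi/2,\pi/2)$, matches) and because an induction in $n$ preserves the paper's normalization of $\log\det(\Id-u)$ from $-\Id_n$ to $-\Id_{n+1}$.

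The rest of your plan is sound: Proposition~\ref{H} supplies the $\delta=0$ change of variables (uniform sphere $\otimes$ Haar$(n)\to$ Haar$(n+1)$), the induction transports the $h$-density factor as above, the $n=1$ base case is the definition of $\nu_\delta^{(1)}$, and compatibility under $\pi_{n+1,n}$ plus Proposition~\ref{extension} gives $\mu^{(\delta)}$ on $U^\infty$; identification with $\tilde\mu^{(\delta)}$ is then by inspection of the density formula. This matches what \cite{BNR} establishes, so your reconstruction agrees with the route the paper relies on.
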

\noindent
The Hua-Pickrell measures are also analogs of Ewens measures on the
space of virtual permutations. Indeed, the Ewens measure of parameter $\theta > 0$, on the space
of virtual permutations, can be constructed in our framework by taking
$(x_n)_{n \geq 1}$ independent and for all $n \geq 1$:
\begin{itemize}
\item $x_n \in \{e_1,\dots,e_n\}$ almost surely;
\item $\mathbb{P} [x_n = e_n] = \theta/(\theta + n-1)$;
\item For all $j \in \{1,\dots,n-1\}$, $\mathbb{P}[x_n = e_j] =1/(\theta + n-1)$.
\end{itemize}
\noindent
The law of $x_n$ can be viewed as an $h$-sampling of the uniform measure on
the space $\{e_1,\dots, e_n\}$, where the function $h$ can be written
\begin{equation}
h(x)= (1 + \langle e_n, x\rangle)^{2 \delta}, \label{he}
\end{equation}
for $\delta = \log \theta/\log 4$, the only difference between the equations \eqref{hhp} and \eqref{he}
is a sign change.
Now, if $n \geq 1$ and if $\nu$ is a probability measure on $U(n)$ such that
$u \mapsto |\log \det(\Id-u)|$ is integrable with respect to $\nu$ (with the same convention
for the logarithm as in Proposition \ref{Ewens}), let us define the {\it capacity}\footnote{
by analogy with the Multiple Input Multiple Output (MIMO) sysems, where the capacity is $\det(\Id+H\tra(H))$,
where $H$ is the rectanglar transmission matrix.} of
$\nu$ as the expectation of $\log \det (\Id -u)$, $u \in U(n)$ following the distribution
$\nu$. A striking fact about the finite-dimensional projections of the Hua-Pickrell distributions is that they maximize the entropy among all the probabilities
which have the same capacity. A similar result has already been proved in \cite{Tsi}
in the context of permutation groups: if the order of the group and the average number of cycles
are fixed, then there exists a unique measure which has the largest entropy, and this measure
is the Ewens measure with a suitable parameter. The equivalent result for unitary matrices is the
following:
\begin{prop} \label{capacity}
Let $n \geq 1$, and let $\delta \in \mathbb{C}$ be such that $\Re(\delta) > -1/2$. Then the capacity
$C_n(\delta)$ of $\mu^{(\delta,n)}$, the projection on $U(n)$ of the Hua-Pickrell measure of parameter $\delta$, is
 well-defined. Moreover, if $f$ denotes the density of $\mu^{(\delta,n)}$ with respect to the
Haar measure, if $\nu$ is a measure on $U(n)$ which is absolutely continuous with respect to the
Haar measure, with density $g$, and if the capacity of $\nu$ is well-defined
and equal to $C_n(\delta)$, then the entropy of $g$ is smaller than or equal to the entropy of
$f$, i.e. the integral of $-g \log g$ with respect to the Haar measure (which is well-defined
in $\mathbb{R} \cup \{- \infty \}$) is smaller than or equal the integral of $-f \log f$, which is
finite. The equality holds only if $f=g$ almost everywhere, i.e. if $\nu$ is equal to $\mu^{(\delta,n)}$.
\end{prop}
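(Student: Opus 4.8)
The plan is to recognize Proposition \ref{capacity} as an instance of the standard maximum–entropy (Gibbs variational) principle: among all densities with a prescribed value of a linear functional, the density proportional to an exponential of that functional (if it lies in the admissible class) is the unique entropy maximizer. Here the ``linear functional'' is the capacity, i.e. $\nu \mapsto \E_\nu[\log\det(\Id-u)]$, which is complex; writing it out we see it is $\E_\nu[\bar\delta^{-1}]$-type pairing — more precisely the density $f$ of $\mu^{(\delta,n)}$ with respect to Haar is, by Proposition \ref{Ewens}, proportional to $\det(\Id-u)^{\bar\delta}\det(\Id-\overline u)^{\delta} = \exp\big(\bar\delta\log\det(\Id-u)+\delta\overline{\log\det(\Id-u)}\big) = \exp\big(2\Re(\bar\delta\,L(u))\big)$ where $L(u):=\log\det(\Id-u)$. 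So $\log f(u) = 2\Re(\bar\delta\,L(u)) - \log C$ for the normalizing constant $C = \E_{\text{Haar}}[\det(\Id-u)^{\bar\delta}\det(\Id-\overline u)^{\delta}]$; in particular $\log f$ is an affine function of the real and imaginary parts of $L(u)$.

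The first step is to check that $C_n(\delta)$ is well-defined, i.e. that $u\mapsto L(u)$ is integrable against $\mu^{(\delta,n)}$; this follows because $|L(u)|$ grows only logarithmically near the singular set $\{\det(\Id-u)=0\}$ while the density $f$ has a power-type factor $|\det(\Id-u)|^{2\Re\delta}$ with $\Re\delta>-1/2$, and the known finite-dimensional density is locally integrable (this is exactly why Hua–Pickrell measures are defined for $\Re\delta>-1/2$); one also records that $\int -f\log f\,d(\text{Haar})$ is finite for the same reason, since $f\log f$ is controlled by $|L(u)|\,|\det(\Id-u)|^{2\Re\delta}$ up to constants. The second step is the entropy comparison. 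Let $g$ be the density of $\nu$ w.r.t. Haar, with $\E_\nu[L] = \E_{\mu^{(\delta,n)}}[L] = C_n(\delta)$ (equality of the complex capacity means equality of both real and imaginary parts). Apply the nonnegativity of relative entropy: $\int g\log(g/f)\,d(\text{Haar}) \ge 0$, with equality iff $g=f$ a.e.; this is just Jensen / $t\log t \ge t-1$ and needs only that $f>0$ Haar-a.e., which holds since $\det(\Id-u)\ne 0$ Haar-a.e. Rearranging, $\int -g\log g \le \int -g\log f = \int g\,\big(\log C - 2\Re(\bar\delta L)\big)\,d(\text{Haar}) = \log C - 2\Re\big(\bar\delta\,\E_\nu[L]\big) = \log C - 2\Re(\bar\delta\,C_n(\delta))$. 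The same computation with $g$ replaced by $f$ gives $\int -f\log f = \log C - 2\Re(\bar\delta\,C_n(\delta))$ as well, since $\E_{\mu^{(\delta,n)}}[L]=C_n(\delta)$ by definition. Hence $\int -g\log g \le \int -f\log f$, with equality forcing $\int g\log(g/f)=0$, hence $g=f$ a.e., i.e. $\nu=\mu^{(\delta,n)}$.

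The main technical obstacle is the careful handling of integrability and of the complex-valued functional: one must make sure that $\E_\nu[L]$ actually makes sense (the hypothesis that $u\mapsto|L(u)|$ is $\nu$-integrable, built into the definition of ``capacity,'' handles this), that the chain $\int -g\log g \le \int -g\log f$ is not of the indeterminate form $-\infty \le -\infty$, and that the rearrangement $\int -g\log f = \log C - 2\Re(\bar\delta\,\E_\nu[L])$ is legitimate termwise — this last point is fine because $-2\Re(\bar\delta L)$ is $\nu$-integrable by hypothesis and $\log C$ is a constant. A secondary subtlety is that the entropy of $g$ is a priori only well-defined in $\RR\cup\{-\infty\}$ (the positive part $\int (g\log g)^+$ could in principle be large), but the inequality $\int -g\log g \le \int -g\log f < \infty$ shows it is in fact finite or $-\infty$, consistent with the statement. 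Everything else is the standard Gibbs argument, with the only group-theoretic input being that $f$ is (a normalized) exponential of the capacity-defining functional, which is precisely the content of Proposition \ref{Ewens}.
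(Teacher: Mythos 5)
Your argument is correct and is essentially the paper's: the paper's pointwise convexity inequality $g\log g > f\log f + (g-f)(1+\log f)$, once integrated, is exactly your relative-entropy statement $\int g\log(g/f)\,d(\mathrm{Haar})\ge 0$, and both proofs zero out the cross term $\int (g-f)\log f$ by observing that $\log f$ is (up to an additive constant) $2\Re(\bar\delta\,L(u))$, so that fixing the complex capacity fixes this integral. The only cosmetic difference is in the integrability step, where the paper invokes the decomposition of the Haar measure into independent spherical coordinates from \cite{BNR} to reduce to one-dimensional checks, whereas you appeal more directly to local integrability of $|\det(\Id-u)|^{2\Re\delta}\log^{\pm}|\det(\Id-u)|$ for $\Re\delta>-1/2$; either route works.
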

\begin{proof}
The integrability condition is equivalent to the fact that, for $u \in U(n)$ following the
Haar measure:
$$\mathbb{E} \left[  |\log \det(\Id-u)| \, \det(\Id-u)^{\bar{\delta}} \det(\Id-\bar{u})^{\delta}
\right] < \infty.$$
By the results of \cite{BNR}, it is equivalent to prove that:
$$
\mathbb{E} \left[ \left|\sum_{k=1}^n \log (1 - \langle e_k, x_k \rangle)
\, \prod_{k=1}^n (1 - \langle e_k, x_k \rangle)^{\bar{\delta}}
(1 - \overline{\langle e_k, x_k} \rangle)^{\delta} \right| \right] <\infty,$$
where the $(x_k)_{1 \leq k \leq n}$ are independent, $x_k$ uniform on the complex
unit sphere of $\mathbb{C}^k$. Then, it is sufficient to have, for all $k \in \{1, \dots ,n\}$:
$$ \mathbb{E} \left[ \left| (1 - \langle e_k, x_k \rangle)^{\bar{\delta}}
(1 - \overline{\langle e_k, x_k \rangle} \rangle)^{\delta} \right| \right] < \infty,
\mathbb{E} \left[ \left|  \log (1 - \langle e_k, x_k \rangle)
 (1 - \langle e_k, x_k \rangle)^{\bar{\delta}}
(1 - \overline{\langle e_k, x_k \rangle})^{\delta} \right| \right] < \infty.$$
 These integrability conditions are implied by:
$$\mathbb{E} \left[ |1 - \langle e_k, x_k \rangle|^{2 \Re(\delta)} \right] < \infty,
\mathbb{E} \left[ \left|\log |1 - \langle e_k, x_k \rangle| \right| \,
|1 - \langle e_k, x_k \rangle|^{2 \Re(\delta)} \right] < \infty,$$
and one checks that these conditions are satisfied for all $\delta \in \mathbb{C}^*$ such that
 $\Re(\delta) > -1/2$. If $f$ and $g$ are defined as in Proposition \ref{capacity}, the
integrability of $f \log f$ under the Haar measure, which implies the finiteness of the entropy
of $\mu^{(n)}_{\delta}$, can be proven in a similar way.

Concerning the optimality to be proven, it results from the elementary inequality
$$ g \log g > f \log f + (g-f)(1+\log f),$$
for all positive $f\neq g$, as shown by a direct study of the function $f\mapsto f \log f + (g-f)(1+\log f)$.
One deduces that under the Haar measure, and for $f$ not almost everywhere equal to $g$,
$$ \mathbb{E} [ - g \log g ] < \mathbb{E} [- f \log f] + \mathbb{E}[(g-f)(1 + \log f)].$$
Moreover, as $\nu$ and $\mu^{(\delta,n)}$ have the same capacity,
the expectation $\mathbb{E}[(g-f)(1+ \log f)]$ is well-defined and equal to zero:
it is also $\E[(g-f)\log f]$ (because $\E(f)=\E(g)=1$ for probability densities), which is exactly a multiple of the difference of the capacities,
thanks to the particular form of $\log f(u)=(\delta+\overline{\delta})\det(\Id-u)$.
\end{proof}

Now, in the next section, we go back to virtual isometries and we prove a strong convergence
result for their eigenangles.
\section{Strong convergence of the eigenangles} \label{conv}

As it was seen above, virtual isometries provide us with the possibility to define on the same probability
space random matrix models for all the finite dimensions. It is then possible to prove strong results, i.e.
properties of almost sure convergence when the dimension goes to infinity. In the particular case of
virtual permutations, Tsilevich \cite{Tsi} proved the following result:
\begin{prop} \label{tsilevich} Let $\sigma = (\sigma_n)_{n \geq 1}$ be a virtual
permutation following the Ewens measure of
parameter $\theta  > 0$, and for $n \geq 1$, $p \geq 1$, let $\ell_p(\sigma_n)$ be
the length of the $p$-th longest cycle of the permutation $\sigma_n \in \mathcal{S}_n$ (for
$p$ larger than the number of cycles of $\sigma_n$, one defines $\ell_p(\sigma_n) := 0$).
Then, almost surely, for all $p \geq 1$, the limit:
$$y_p(\sigma):=\lim_{n\to\infty}\frac{\ell_p(\sigma_n)}{n}$$
exists, and $(y_p(\sigma))_{p \geq 1}$ follows a Poisson-Dirichlet distribution\footnote{For explicit 
formulas and characterizations of Poisson-Dirichlet distributions, see \cite{Pitman}}
of parameter $\theta$.
\end{prop}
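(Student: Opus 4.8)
The plan is to work directly with the Chinese restaurant process (CRP) construction of the Ewens-distributed virtual permutation recalled in the introduction. Its crucial feature is that the cycle structure grows \emph{monotonically}: once a cycle appears it only gains new elements. Order the cycles by their time of creation; for $i,n\ge 1$ let $C_i(n)$ be the length in $\sigma_n$ of the $i$-th cycle ever created (with $C_i(n)=0$ before it appears), and let $T_i$ be the step at which it is created, so $C_i(T_i)=1$. A new cycle is created at step $j$ with probability $\theta/(\theta+j-1)$, independently over $j$, and $\sum_j\theta/(\theta+j-1)=\infty$; the second Borel--Cantelli lemma then gives $T_i<\infty$ a.s.\ for every $i$. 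From the transition rule, conditionally on $\mathcal{F}_n:=\sigma(\sigma_1,\dots,\sigma_n)$ the $i$-th cycle grows by one with probability $C_i(n)/(\theta+n)$, hence $\E[C_i(n+1)\mid\mathcal{F}_n]=C_i(n)(\theta+n+1)/(\theta+n)$, so that $\bigl(C_i(n)/(\theta+n)\bigr)_{n\ge T_i}$ is a $[0,1]$-valued martingale. It therefore converges a.s.\ and in $L^1$ to some $W_i\in[0,1]$, and consequently $C_i(n)/n\to W_i$ a.s.

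The heart of the argument is to rule out escape of mass, i.e.\ $\sum_{i\ge 1}W_i=1$ a.s. Since $\sum_i C_i(n)/n=1$ for every $n$, Fatou's lemma gives $\sum_i W_i\le 1$ a.s. For the reverse inequality I would compute $\sum_i\E[W_i]$: optional stopping applied to the bounded martingale above yields $\E[W_i\mid\mathcal{F}_{T_i}]=C_i(T_i)/(\theta+T_i)=1/(\theta+T_i)$, hence $\E[W_i]=\E[1/(\theta+T_i)]$. Writing $\beta_j=\1\{\text{a new cycle is created at step }j\}$, so that $\{T_i:i\ge1\}=\{j:\beta_j=1\}$ and $\E[\beta_j]=\theta/(\theta+j-1)$, a telescoping sum gives
$$\sum_{i\ge1}\E[W_i]=\E\Bigl[\sum_{j\ge1}\frac{\beta_j}{\theta+j}\Bigr]=\sum_{j\ge1}\frac{\theta}{(\theta+j-1)(\theta+j)}=\theta\sum_{j\ge1}\Bigl(\frac{1}{\theta+j-1}-\frac{1}{\theta+j}\Bigr)=1 .$$
Combined with $0\le\sum_iW_i\le1$ a.s., monotone convergence forces $\sum_iW_i=1$ a.s.

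The next step transfers the coordinatewise convergence to the decreasing rearrangements, via an elementary lemma: if $v^{(n)},v$ are probability vectors on $\NN$ with $v^{(n)}_i\to v_i$ for each $i$, then, writing $s_p(w):=\sum_{k=1}^p w^{\downarrow}_k=\sup_{|S|=p}\sum_{i\in S}w_i$ for the partial sums of the decreasing rearrangement $w^{\downarrow}$, one has $s_p(v^{(n)})\to s_p(v)$ for every $p$ — the lower bound by fixing a maximising finite set $S$, the upper bound by tightness (pick $N$ with $\sum_{i>N}v_i<\epsilon$, so $\sum_{i>N}v^{(n)}_i<2\epsilon$ for large $n$) — whence $w^{\downarrow}_p=s_p-s_{p-1}$ converges too. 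Applying this with $v^{(n)}_i=C_i(n)/n$ (a probability vector since $\sum_iC_i(n)=n$) and $v_i=W_i$ (a probability vector by the previous step), and noting that $\bigl(\ell_p(\sigma_n)/n\bigr)_{p\ge1}$ is precisely the decreasing rearrangement of $\bigl(C_i(n)/n\bigr)_{i\ge1}$, one concludes that a.s., for every $p\ge1$, the limit $y_p(\sigma)=\lim_n\ell_p(\sigma_n)/n$ exists and equals the $p$-th largest entry of $(W_i)_{i\ge1}$.

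Finally I would identify the law of $(W_i)_{i\ge1}$ as the residual allocation (GEM) model of parameter $\theta$, namely $(W_i)_{i\ge1}\law\bigl(B_1,(1-B_1)B_2,(1-B_1)(1-B_2)B_3,\dots\bigr)$ with $B_i$ i.i.d.\ $\mathrm{Beta}(1,\theta)$; its decreasing rearrangement is, by definition, the Poisson--Dirichlet distribution of parameter $\theta$, which yields the claim for $(y_p(\sigma))_{p\ge1}$. This part is classical: $W_1$ is the a.s.\ limit of the fraction $C_1(n)/(\theta+n)$ in a P\'olya urn with one ordinary ball and a pseudo-ball of mass $\theta$, so $W_1\sim\mathrm{Beta}(1,\theta)$; and the subsequence of customers never seated in the first cycle, relabelled in order, forms again a CRP of parameter $\theta$ (the restriction property of the CRP) and is independent of the cycle-$1$/not trajectory, so $W_2/(1-W_1)\sim\mathrm{Beta}(1,\theta)$ independently of $W_1$, and one iterates — see \cite{Pitman}; alternatively one invokes Kingman's representation, already used in the introduction, to identify the limiting frequencies. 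I expect the main obstacle to be the middle steps rather than the martingale or the law identification: the per-cycle convergence $C_i(n)/n\to W_i$ is an easy bounded-martingale fact that says nothing about the \emph{ranked} lengths, and the real content is showing that no mass is lost ($\sum_iW_i=1$) and that this upgrades to convergence of the decreasing rearrangements — exactly what guarantees that the limiting ranked frequencies form a genuine probability vector to which the Poisson--Dirichlet identification applies.
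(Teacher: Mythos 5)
The paper does not prove Proposition \ref{tsilevich}: it is quoted as a result of Tsilevich \cite{Tsi} and used as a black box, so there is no in-paper argument to compare against. Your proof, however, is correct and self-contained, and it follows the standard modern route to this theorem: bounded martingale convergence for the age-ordered cycle frequencies $C_i(n)/(\theta+n)$, a mass-conservation argument showing $\sum_i W_i = 1$ almost surely, an elementary ranking lemma upgrading coordinatewise to ranked convergence, and the stick-breaking (GEM) identification of the limiting age-ordered frequencies, whose decreasing rearrangement is by definition $\mathrm{PD}(\theta)$.

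A few remarks on the places that carry the real weight, all of which you handle correctly. The identity $\sum_i \E[W_i]=1$ is the crux: $\E[W_i\mid\mathcal{F}_{T_i}] = 1/(\theta+T_i)$ follows from optional stopping for the uniformly bounded martingale started at the a.s.\ finite stopping time $T_i$, and the telescoping sum $\sum_{j\ge 1}\theta/[(\theta+j-1)(\theta+j)]=1$ closes the gap left by Fatou. The ranking lemma is a genuine step (coordinatewise convergence of $(C_i(n)/n)_i$ to $(W_i)_i$ does not by itself control the ranked entries); you correctly exploit $\sum_i C_i(n)/n=\sum_i W_i=1$, hence uniform tightness of the tails, for the upper bound on $s_p$, and the fact that a nonnegative summable sequence attains its top $p$ entries on a finite set for the lower bound. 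Finally, the GEM identification via the P\'olya urn for the first cycle and the restriction/self-similarity property of the CRP is classical and is exactly the characterization of $\mathrm{PD}(\theta)$ referenced in \cite{Pitman}; the alternative via Kingman's representation, already invoked in the introduction for the $\theta=1$ discussion, would do equally well.
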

\noindent
From Proposition \ref{tsilevich}, it is not difficult to deduce the following result, giving
an almost sure convergence for the eigenangles of the sequence of permutations matrices
 associated to a virtual
permutation.
\begin{prop} \label{convtsilevich}
Let $(u_n)_{n \geq 1}$ be a random virtual isometry consisting of the sequence of permutation
matrices associated to a virtual permutation $\sigma$ which follows the Ewens measure of parameter $\theta > 0$.
Then, for all $n \geq 1$, zero is an eigenangle of $u_n$, and its multiplicity increases almost
surely to infinity when $n$ goes to infinity. Moreover, for $n \geq 1$, $k \geq 1$, let
$\theta^{(n)}_k$ be the $k$-th smallest strictly positive eigenangle of $u_n$, and
$\theta^{(n)}_{-k}$ the $k$-th largest strictly negative eigenangle of $u_n$. Then,
almost surely, for all $n \geq 1$, $k \geq 1$, $\theta^{(n)}_{-k} = - \theta^{(n)}_k$, and for $n$
going to infinity,
$n \theta^{(n)}_k/2 \pi$ converges to the $k$-th smallest element of the set which contains exactly all
the strictly positive multiples of $1/y_p(\sigma)$ for all $p \geq 1$, where $y_p(\sigma)$ is
defined in Proposition \ref{tsilevich}.
\end{prop}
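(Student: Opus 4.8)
The plan is to transfer everything to the cycle structure of $\sigma=(\sigma_n)_{n\ge 1}$ and then combine Proposition~\ref{tsilevich} with an elementary counting of roots of unity. \textbf{Deterministic reductions first.} Since $u_n$ is the permutation matrix of $\sigma_n$, its characteristic polynomial is $\prod_c(z^{\ell(c)}-1)$, the product running over the cycles $c$ of $\sigma_n$ with $\ell(c)$ the length of $c$; hence the spectrum of $u_n$ is the union of the sets of $\ell(c)$-th roots of unity. In particular $1$ is an eigenvalue with multiplicity equal to the number of cycles $k_{\sigma_n}$, so $0$ is always an eigenangle; and $u_n$ being a real matrix its spectrum is conjugation-invariant with multiplicities, so the multiset of eigenangles in $(-\pi,\pi]$ is symmetric about $0$, apart from the value $\pi$. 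This already yields the deterministic identity $\theta^{(n)}_{-k}=-\theta^{(n)}_k$. For the multiplicity of $0$: in the Chinese restaurant construction of the Ewens measure one has $k_{\sigma_{n+1}}\in\{k_{\sigma_n},k_{\sigma_n}+1\}$, so this multiplicity is nondecreasing in $n$; and it tends a.s.\ to $\infty$ because the events ``$n+1$ opens a new cycle'' are independent with probabilities $\theta/(\theta+n)$ summing to $\infty$, so infinitely many occur by Borel--Cantelli (alternatively one may quote $k_{\sigma_n}/\log n\to\theta$ a.s.).

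\textbf{The almost sure event.} I would then work on a full-measure event on which: (i) $\ell_p(\sigma_n)/n\to y_p:=y_p(\sigma)$ for every $p\ge1$ (Proposition~\ref{tsilevich}); (ii) $\sum_{p\ge1}y_p=1$ and each $y_p>0$ (features of the Poisson-Dirichlet law of parameter $\theta$); (iii) the numbers $m/y_p$, $p\ge1$, $m\ge1$, are pairwise distinct. Point (iii) holds a.s.: for fixed $p\ne q$ and $m,m'\ge1$ the pair $(y_p,y_q)$ has an absolutely continuous law, so $\{m\,y_q=m'\,y_p\}$ is null, and one takes a countable union. By (ii), for every $T>0$ only finitely many $p$ satisfy $y_p>1/T$, so the set $S:=\{m/y_p:p,m\ge1\}$ is locally finite; write $S=\{s_1<s_2<\cdots\}$.

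\textbf{The counting step.} By the first paragraph, the positive eigenangles of $u_n$ arising from a cycle of length $\ell$ are the numbers $2\pi m/\ell$ with $1\le m\le\lfloor\ell/2\rfloor$. Let $\mathcal G$ be the set of $T>0$ with $T\ne m/y_p$ and $Ty_p\notin\ZZ$ for all $p,m\ge1$; on the event above its complement in $(0,\infty)$ is countable. For $T\in\mathcal G$ set $N_n(T):=\#\{k\ge1:n\theta^{(n)}_k/2\pi\le T\}$, which equals $\sum_c\#\{m\ge1:nm/\ell(c)\le T,\ m\le\lfloor\ell(c)/2\rfloor\}$, the sum over cycles $c$ of $\sigma_n$. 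A cycle of length $\ell$ contributes nothing unless $\ell\ge n/T$, and since $\sum_c\ell(c)=n$ there are at most $\lceil T\rceil$ such cycles, necessarily among the $\lceil T\rceil$ longest; so for each fixed $p\le\lceil T\rceil$ the contribution of the $p$-th longest cycle is $\#\{m\ge1:nm/\ell_p(\sigma_n)\le T\}=\lfloor T\ell_p(\sigma_n)/n\rfloor$ for $n$ large (the bound $m\le\lfloor\ell_p(\sigma_n)/2\rfloor$ being then vacuous), which tends to $\lfloor Ty_p\rfloor$ by (i) and $T\in\mathcal G$. Hence $N_n(T)\to\sum_{p\ge1}\lfloor Ty_p\rfloor=\#\{(p,m):m/y_p\le T\}=\#\{k:s_k\le T\}$, the last step using (iii). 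Since $N_n(T)\ge k$ iff $n\theta^{(n)}_k/2\pi\le T$, letting $T$ decrease and then increase to $s_k$ through $\mathcal G$ gives $n\theta^{(n)}_k/2\pi\to s_k$ on that event, which is the asserted limit.

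\textbf{Main obstacle.} The delicate part is the counting step: the key is the uniform bound ``only the $O_T(1)$ longest cycles of $\sigma_n$ produce eigenangles of size $O(1/n)$'', which rests on the exact identity $\sum_c\ell(c)=n$ (no dust at finite $n$), together with the passage to the limit of the floor functions, which is what forces the genericity restriction $T\in\mathcal G$; and the a.s.\ distinctness (iii), ruling out limiting coincidences $m/y_p=m'/y_q$, is precisely what makes the $k$-th eigenangle of $u_n$ match the $k$-th element of the set $S$ (rather than of a multiset) in the limit.
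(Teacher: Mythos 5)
The paper does not actually give a proof of Proposition \ref{convtsilevich}: it is asserted to be a straightforward consequence of Proposition \ref{tsilevich} and the discussion in the introduction. Your argument supplies the missing details, and it is correct; the route (reduce to the cycle decomposition of $\sigma_n$, restrict to the a.s.\ event on which $\ell_p(\sigma_n)/n\to y_p>0$, $\sum_p y_p=1$, and the numbers $m/y_p$ are pairwise distinct, then count eigenangles below a generic threshold $T$ using the observation that $\sum_c\ell(c)=n$ forces only the $\lceil T\rceil$ longest cycles to contribute on the scale $2\pi/n$, and finally squeeze through $T\in\mathcal{G}$) is evidently what the authors have in mind. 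One small caveat you should address rather than slide past: the deterministic identity $\theta^{(n)}_{-k}=-\theta^{(n)}_k$ does not literally hold for every $k$ when $\sigma_n$ has a cycle of even length, since the eigenangle $\pi$ (eigenvalue $-1$) then appears among the strictly positive eigenangles with no matching strictly negative partner, so the indexing between positive and negative lists drifts by one from that $k$ onward; this imprecision is already present in the proposition as stated and is harmless for the scaling limit near $0$, but your phrase ``this already yields the deterministic identity'' glosses over it. The rest of the counting step, including the use of the genericity conditions to control the floor functions and to match the $k$-th element of the set $S$, is sound.
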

\noindent
In this section, we give a direct and purely probabilistic proof of an analog of Proposition \ref{convtsilevich}
for random virtual isometries which follow  the Haar measure. 
\begin{prop} \label{main}
Let $(u_n)_{n \geq 1}$ be a random virtual isometry, following the Haar
 measure. For $n \geq 1$, $k \geq 1$, let $\theta^{(n)}_k$ be the $k$-th smallest strictly positive
eigenangle of $u_n$, and let $\theta^{(n)}_{1-k}$ be the $k$-th largest nonnegative eigenangle of $u_n$.
Then almost surely, for all $k \in \mathbb{Z}$, $n \theta^{(n)}_k / 2 \pi$ converges
 to a limit $x_k$ when $n$ goes
to infinity, with the following rate:
$$n \theta^{(n)}_k / 2 \pi = x_k + O(n^{-\epsilon}),$$
for some universal constant $\epsilon > 0$. Moreover, the point process $(x_k)_{k \in \mathbb{Z}}$ is a determinantal
process and its kernel $K$ is the {\it sine kernel}, i.e it is given by:
$$K(x,y) = \frac{\sin(\pi (x-y))}{\pi(x-y)}.$$
\end{prop}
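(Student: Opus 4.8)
The plan is to exploit the explicit Chinese-restaurant-type construction of the Haar-distributed virtual isometry from Proposition \ref{H}: write $u_n = r_n r_{n-1}\cdots r_1$ where each $r_j$ is either the identity or the unique reflection sending $e_j$ to $x_j$, the $x_j$ being independent and uniform on the unit sphere of $\mathbb{C}^j$. The first step is to derive from this a recursion relating the characteristic polynomials $Z_n(\theta) := \det(\Id - e^{-i\theta} u_n)$ (equivalently, a relation between the eigenangles of $u_n$ and $u_{n+1}$). Since $u_{n+1} = r_{n+1}(u_n \oplus 1)$ and $r_{n+1}-\Id$ has rank one, a rank-one perturbation/secular-equation argument gives an explicit scalar equation whose roots are the eigenangles of $u_{n+1}$ in terms of those of $u_n$ and the random data defining $r_{n+1}$. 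The key structural consequence I expect is an \emph{interlacing}: the eigenangles of $u_{n+1}$ interlace (on the circle) with those of $u_n$ together with the extra angle $0$ coming from the block $\oplus 1$. This is the analogue of interlacing for rank-one updates of Hermitian matrices, transported to the unitary circle via the Cayley picture, and it makes each normalized eigenangle $n\theta_k^{(n)}/2\pi$ a quantity that moves by a controlled amount as $n$ increases.

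The second step is the almost sure convergence together with the rate. Fix $k$. Because of interlacing, the sequence $n\mapsto n\theta_k^{(n)}/2\pi$ is ``almost monotone'': the number of eigenangles of $u_n$ in a fixed window $[0,2\pi a/n]$ changes by at most $1$ when $n\to n+1$, so $N_n(a):=\#\{j: 0<\theta_j^{(n)}\le 2\pi a/n\}$ is, up to $\pm1$ jumps, a submartingale-type object once we insert the random insertion probabilities. Concretely, from the secular equation the probability (conditional on $u_n$) that the new angle $x_{n+1}$ causes an eigenvalue to cross a given boundary $2\pi a/n$ is of order $1/n$, so the total expected number of crossings up to time $N$ is $O(\log N)$; a Borel--Cantelli / martingale square-function estimate then upgrades this to almost sure convergence of $N_n(a)$, hence of each $n\theta_k^{(n)}/2\pi$, to a limit $x_k$. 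To get the polynomial rate $O(n^{-\epsilon})$ one quantifies this: the fluctuation of $n\theta_k^{(n)}/2\pi$ between scales $n$ and $2n$ has variance $O(1/n)$ (summing the per-step $O(1/n^2)$ contributions of the independent reflections), so by a maximal inequality along dyadic scales and Borel--Cantelli the oscillation past time $n$ is $O(n^{-1/2+o(1)})$ almost surely; any $\epsilon<1/2$ then works.

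The third step identifies the limiting point process. For each fixed $n$, $u_n$ is Haar on $U(n)$, so $\{n\theta_k^{(n)}/2\pi\}_k$ is the classical scaled spectrum of the CUE, which is a determinantal process converging in law (on compact windows) to the determinantal process with the sine kernel $K(x,y)=\sin(\pi(x-y))/(\pi(x-y))$; this is the ``very classical'' fact alluded to in the introduction. Combining this weak convergence with the almost sure convergence of every individual point established in step two (which forces the limit to coincide, as random point processes, with the weak limit), one concludes that $(x_k)_{k\in\mathbb Z}$ is exactly the sine-kernel determinantal process. One must check the correspondence of labels (that $\theta^{(n)}_{-k}$, the $k$-th largest strictly negative angle, and $\theta^{(n)}_k$ pair up correctly and that the conventions at $0$ are harmless for $u_n$ Haar, where $1$ is a.s. not an eigenvalue), but this is bookkeeping.

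The main obstacle is step two: turning the one-step interlacing and the $O(1/n)$-per-step crossing estimate into an almost sure convergence \emph{with a power rate}, uniformly enough in $k$. The delicate points are (i) controlling the conditional crossing probability of the random reflection $r_{n+1}$ near a moving boundary $2\pi a/n$ — this requires understanding the local density of eigenangles of $u_n$ near $0$ at scale $1/n$, which is itself only controlled in distribution, so one needs an a.s. upper bound on this local density (obtainable from the determinantal structure at each fixed $n$ plus a union bound over a sparse sequence of $n$); and (ii) handling the fact that the events ``a crossing occurs between step $n$ and $n+1$'' are not independent across $n$, which forces a martingale rather than a naive Borel--Cantelli argument. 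I expect the cleanest route is to fix a window, track the integer-valued counting function $N_n(a)$, write its increments as a predictable drift plus a martingale difference bounded by $1$ with conditional variance $O(1/n)$, and apply Azuma/Freedman along dyadic blocks; the power $\epsilon$ then emerges from the $\sum 1/n$ behaviour of the conditional variances.
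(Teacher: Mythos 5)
Your plan is essentially the paper's: both start from the product-of-reflections construction (Proposition \ref{H}), derive a rank-one secular equation relating the spectra of $u_n$ and $u_{n+1}$ (Proposition \ref{characteristic}), exploit interlacing (Proposition \ref{Phi}), establish almost sure polynomial bounds on the radial coordinate $\rho_n$, on the coefficients $\gamma^{(n)}_k = |\mu^{(n)}_k|^2/(1-\rho_n^2)$, and on spectral gaps via Borel--Cantelli and the determinantal structure at each fixed $n$ (the event $E=E_0\cap E_1\cap E_2\cap E_3$), and then conclude via a martingale with per-step conditional variance $O(1/n^2)$ followed by the classical weak-convergence identification of the sine kernel.

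One wobble worth flagging: the counting-function detour in your step two does not work as literally stated. If the one-step boundary-crossing probability is only bounded by $O(1/n)$, the expected number of crossings up to time $N$ is $O(\log N)$, which \emph{diverges} and hence does not give stabilization of $N_n(a)$; and a martingale square-function estimate with conditional variances $O(1/n)$ summing to $\sim\log n$ likewise gives oscillation of order $\sqrt{\log n}$, not convergence. The mechanism that actually makes the argument close is precisely the one in your second half: one tracks $\theta^{(n)}_k$ multiplicatively, identifies $\log(\theta^{(n)}_k/\theta^{(n+1)}_k)\approx\gamma^{(n)}_k$, and observes that the drift $\sum_{p<n}1/p$ cancels $\log n$ so that what remains is the $L^2$-bounded martingale $M_n=\sum_{p<n}(\gamma^{(p)}_k-1/p)$ with per-step variance $O(1/n^2)$, whose square function \emph{does} converge. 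The paper never introduces $N_n(a)$; it tracks $\log\theta^{(n)}_k$ directly, which sidesteps the counting-function issue entirely. Finally, your claim that ``any $\epsilon<1/2$ works'' is too optimistic for this method: the martingale tail gives $O(n^{-1/4})$, but the bottleneck is the accumulated deterministic error in the secular-equation approximation under $E$, which in the paper yields only $\epsilon=0.09$; the statement only asserts some universal $\epsilon>0$, so this is a presentational point rather than an error.
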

\noindent
In the proof of Proposition \ref{main}, the first step is to find an explicit relation between
the characteristic polynomials of $u_n$ and $u_{n+1}$, when $(u_n)_{n \geq 1}$ is a virtual isometry.
\begin{prop} \label{characteristic}
Let $(u_n)_{n \geq 1}$ be a virtual isometry, and for $n \geq 1$, let $x_n := u_n(e_n)$,
$v_n := x_n - e_n$, let $(f_k^{(n)})_{1 \leq k \leq n}$ be an orthonormal basis of $\mathbb{C}^n$,
consisting of eigenvectors of $u_n$, let $(\lambda_k^{(n)})_{1 \leq k \leq n}$ be the
corresponding sequence of
eigenvalues, let $P_n$ be the characteristic polynomial of $u_n$, given by
$$P_n(z) := \det(z \Id_n - u_n),$$
and let us decompose the vector $x_{n+1} \in \mathbb{C}^{n+1}$ as follows:
$$x_{n+1} = \sum_{k=1}^n \mu_{k}^{(n)} f_k^{(n)} + \nu_n e_{n+1},$$
Then for all $n \geq 1$ such that $x_{n+1} \neq e_{n+1}$, one has $\nu_n \neq 1$, and
the polynomials $P_n$ and $P_{n+1}$ satisfy the relation:
$$P_{n+1}(z) = \frac{P_n(z)}{\overline{\nu_n} - 1} \left[ (z - \nu_n)( \overline{\nu_n} - 1)
 - (z-1) \sum_{k=1}^{n} \,   | \mu^{(n)}_k |^2  \frac{ \lambda^{(n)}_k }{ z-\lambda^{(n)}_k}
   \right],$$
for all $z \notin \{\lambda^{(n)}_1, \dots, \lambda^{(n)}_n\}$.
\end{prop}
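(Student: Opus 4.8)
The plan is to exploit the recursive description of virtual isometries from Proposition~\ref{chinois}: writing $\tilde u_n:=u_n\oplus 1$ for the operator $u_n$ regarded on $\CC^{n+1}$ (and fixing $e_{n+1}$), one has $u_{n+1}=r\,\tilde u_n$, where $r$ is the unique reflection with $r(e_{n+1})=x_{n+1}$ — this is the remark after Proposition~\ref{projection}, applied with $E=\CC^{n+1}$, $F=\CC^n$. When $x_{n+1}\neq e_{n+1}$ this reflection can be written $r=\Id+(\zeta-1)\langle w,\cdot\rangle\,w$ with $w:=v_{n+1}/\|v_{n+1}\|$ and a unimodular scalar $\zeta$; projecting $r(e_{n+1})=x_{n+1}$ onto $w$ gives $\zeta=\langle w,x_{n+1}\rangle/\langle w,e_{n+1}\rangle$. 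First I would compute, using $\langle e_{n+1},x_{n+1}\rangle=\nu_n$, $\|x_{n+1}\|=1$ and the sesquilinear convention of \eqref{v}, that $\langle w,e_{n+1}\rangle=(\overline{\nu_n}-1)/\|v_{n+1}\|$ and $\langle w,x_{n+1}\rangle=(1-\nu_n)/\|v_{n+1}\|$, hence $\zeta=(1-\nu_n)/(\overline{\nu_n}-1)$. Along the way one gets $\sum_{k}|\mu^{(n)}_k|^2=1-|\nu_n|^2$ (from $\|x_{n+1}\|=1$), so $\nu_n=1$ would force $x_{n+1}=e_{n+1}$, which proves $\nu_n\neq1$; one also records $\|v_{n+1}\|^2=2(1-\Re\nu_n)$ and, consequently, the crucial simplification $\zeta-1=\|v_{n+1}\|^2/(\overline{\nu_n}-1)$.

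Next I would regard $u_{n+1}$ as a rank-one perturbation of $\tilde u_n$: from $u_{n+1}=r\,\tilde u_n$ one has $u_{n+1}=\tilde u_n+(\zeta-1)\langle\tilde u_n^{*}w,\cdot\rangle\,w$, so the matrix determinant lemma gives
\[
P_{n+1}(z)=\det(z\Id_{n+1}-\tilde u_n)\Bigl(1-(\zeta-1)\langle w,\tilde u_n(z\Id_{n+1}-\tilde u_n)^{-1}w\rangle\Bigr)
\]
for $z$ not an eigenvalue of $\tilde u_n$, i.e.\ $z\notin\{\lambda^{(n)}_1,\dots,\lambda^{(n)}_n,1\}$. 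The spectrum of $\tilde u_n$ is that of $u_n$ together with the eigenvalue $1$ (eigenvector $e_{n+1}$), so $\det(z\Id_{n+1}-\tilde u_n)=(z-1)P_n(z)$, and $(f^{(n)}_1,\dots,f^{(n)}_n,e_{n+1})$ is an orthonormal eigenbasis of $\tilde u_n$. In this basis $v_{n+1}=x_{n+1}-e_{n+1}$ has coordinates $(\mu^{(n)}_1,\dots,\mu^{(n)}_n,\nu_n-1)$, whence
\[
\langle w,(z\Id_{n+1}-\tilde u_n)^{-1}w\rangle=\frac{1}{\|v_{n+1}\|^2}\Bigl(\sum_{k=1}^{n}\frac{|\mu^{(n)}_k|^2}{z-\lambda^{(n)}_k}+\frac{|\nu_n-1|^2}{z-1}\Bigr),
\]
and $\tilde u_n(z\Id-\tilde u_n)^{-1}=-\Id+z(z\Id-\tilde u_n)^{-1}$ handles the extra factor $\tilde u_n$. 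Substituting and using both $(\zeta-1)/\|v_{n+1}\|^2=1/(\overline{\nu_n}-1)$ and $\zeta=(1-\nu_n)/(\overline{\nu_n}-1)$, one reaches
\[
P_{n+1}(z)=\frac{(z-1)P_n(z)}{\overline{\nu_n}-1}\Bigl[(1-\nu_n)-z\Bigl(\sum_{k=1}^{n}\frac{|\mu^{(n)}_k|^2}{z-\lambda^{(n)}_k}+\frac{|\nu_n-1|^2}{z-1}\Bigr)\Bigr].
\]

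Finally I would simplify the bracket using $z/(z-\lambda^{(n)}_k)=1+\lambda^{(n)}_k/(z-\lambda^{(n)}_k)$, $\sum_k|\mu^{(n)}_k|^2=1-|\nu_n|^2$, the identities $(1-\nu_n)-(1-|\nu_n|^2)=\nu_n(\overline{\nu_n}-1)$ and $|\nu_n-1|^2=(\nu_n-1)(\overline{\nu_n}-1)$, and then the elementary $(z-1)\nu_n+z(1-\nu_n)=z-\nu_n$; these cancellations turn the displayed expression into
\[
P_{n+1}(z)=\frac{P_n(z)}{\overline{\nu_n}-1}\Bigl[(z-\nu_n)(\overline{\nu_n}-1)-(z-1)\sum_{k=1}^{n}|\mu^{(n)}_k|^2\frac{\lambda^{(n)}_k}{z-\lambda^{(n)}_k}\Bigr],
\]
which is the claim, established for $z\notin\{\lambda^{(n)}_1,\dots,\lambda^{(n)}_n,1\}$. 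Since the left-hand side is a polynomial and the right-hand side a rational function whose only possible poles are the $\lambda^{(n)}_k$ (the apparent pole at $z=1$ being removable), the identity extends to every $z\notin\{\lambda^{(n)}_1,\dots,\lambda^{(n)}_n\}$.

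The argument is essentially a direct computation, so I do not expect a conceptual obstacle; the points demanding care are the consistent use of the sesquilinear convention of \eqref{v} when solving for the reflection $r$ and when applying the determinant lemma, the cancellation producing $\zeta-1=\|v_{n+1}\|^2/(\overline{\nu_n}-1)$, and the final algebraic simplification, where a stray conjugation or sign is easiest to introduce.
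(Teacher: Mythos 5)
Your proof is correct and follows essentially the same route as the paper: you write $u_{n+1}=r\,(u_n\oplus 1)$ with $r$ the reflection sending $e_{n+1}$ to $x_{n+1}$ (the paper writes $r_{n+1}=\Id_{n+1}+\tfrac{1}{\overline{\nu_n}-1}v_{n+1}\overline{v_{n+1}}^{\mathrm t}$, which is exactly your $\Id+(\zeta-1)\langle w,\cdot\rangle w$ after unwinding the normalization), apply the rank-one determinant identity, expand the resulting scalar in the eigenbasis $(f^{(n)}_1,\dots,f^{(n)}_n,e_{n+1})$, and remove the spurious pole at $z=1$ by polynomiality; the only differences from the paper's proof are presentational (normalized $w$ versus $v_{n+1}$, and the resolvent identity $\tilde u_n(z\Id-\tilde u_n)^{-1}=-\Id+z(z\Id-\tilde u_n)^{-1}$ in place of a direct trace computation).
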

\begin{proof}
Since $(u_n)_{n \geq 1}$ is a virtual isometry and $x_{n+1} \neq e_{n+1}$, one
has $u_{n+1} = r_{n+1} (u_n \oplus 1),$
where $r_{n+1}$ is the unique reflection such that $r_{n+1}(e_{n+1}) = x_{n+1}$.
One can check that the matrix $r_{n+1}$ is given by:
$$r_{n+1} = \Id_{n+1} + \frac{1}{\overline{\nu_n} - 1} v_{n+1} \overline{v_{n+1}}^t,$$
which implies, for $z \notin \{\lambda^{(n)}_1, \dots \lambda^{(n)}_n, 1\}$,
\begin{align*}
P_{n+1}(z) & = \det(z \Id_{n+1} - u_{n} \oplus 1) \det \left[\Id_{n+1} -
\left( \frac{1}{\overline{\nu_n} - 1}  (z \Id_{n+1} - u_{n} \oplus 1)^{-1}  v_{n+1} \overline{v_{n+1}}^t (u_{n} \oplus 1)
\right) \right] \\
& = (z-1)P_n(z) \left[ 1 - \frac{1}{\overline{\nu_n} - 1} \,
\operatorname{Tr} \left( (z \Id_{n+1} - u_{n} \oplus 1)^{-1} v_{n+1} \overline{v_{n+1}}^t (u_{n} \oplus 1)
\right) \right],
\end{align*}
\noindent
since $\det(\Id + A) = 1 + \operatorname{Tr}(A)$ for any matrix $A$ with rank one. One deduces, by writing
the matrices in the basis $(e_{n+1}, f^{(n)}_1, \dots, f^{(n)}_n)$:
$$P_{n+1}(z)  = (z-1)P_n(z) \left[1 - \frac{1}{\overline{\nu_n} - 1}
\left( \frac{|\nu_n - 1|^2}{z-1} +
\sum_{k=1}^n |\mu_k^{(n)}|^2 \, \frac{\lambda^{(n)}_k}{z - \lambda^{(n)}_k} \right)\right],$$
which implies Proposition \ref{characteristic} for $z \neq 1$. The case $z =1$ can then be deduced from
the fact that $P_{n}$ and $P_{n+1}$ are polynomial functions.
\end{proof}
\noindent
From Proposition \ref{characteristic}, it is possible to deduce some
 information on the behavior of the eigenangles corresponding
to a virtual isometry:
 \begin{prop} \label{Phi}
 Let $(u_n)_{n \geq 1}$ be a virtual isometry, such that, with the notation of
Propositions \ref{main} and \ref{characteristic}, the event
$E_0 := \{ \theta^{(1)}_0 \neq 0\} \cap  \{ \forall n \geq 1, \nu_n \neq 0 \} \cap
\{ \forall n \geq 1, k \in \{1,2,\dots,n\}, \mu^{(n)}_k \neq 0\}$
  holds. Then, for all $n \geq 1$, $k \in \{1, \dots, n\}$,
\begin{align*}
\rho_n &:= |\nu_n| \in (0,1),\\
\psi_n &:= \operatorname{Arg}(\nu_n) \in (-\pi, \pi],\\
\gamma^{(n)}_k &:= \frac{|\mu_k^{(n)}|^2}{1 - |\nu_n|^2}
\end{align*}
are well-defined, and
$
\sum_{k=1}^n \gamma^{(n)}_k = 1.
$
 Moreover, for all $n \geq 1$,
 the expression
   $$ \Phi (\eta) := (1 + \rho_n^2) \, \cos(\eta/2) -  2 \rho_n \cos \left( \eta/2 - \psi_n \right)
 + \, (1 - \rho_n^2) \, \sin(\eta/2) \, \sum_{k=1}^{n} \gamma_k^{(n)} \cot \left( \frac{ \eta
- \theta^{(n)}_k}{2} \right),$$
 which is well-defined for $\eta \in [0,2\pi] \backslash \{\theta^{(n)}_k, 1 \leq k \leq n\}$,
 vanishes if and only if $\eta = \theta^{(n+1)}_k$ for some $k \in \{1,\dots,n+1\}$, and one has the
inequalities:
 \begin{equation}
 0 < \theta^{(n+1)}_1 < \theta^{(n)}_1 < \theta^{(n+1)}_2 < \theta^{(n)}_2 < \dots <
  \theta^{(n+1)}_n < \theta^{(n)}_n < \theta^{(n+1)}_{n+1} < 2 \pi.  \label{interlace}
 \end{equation}
 In other words, the eigenvalues of $u_{n+1}$  interlace between one and the eigenvalues of $u_n$.
 \end{prop}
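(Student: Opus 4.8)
The plan is to combine the characteristic-polynomial identity of Proposition~\ref{characteristic} with an elementary study of a single trigonometric function of the variable $\eta$, organised around an induction on $n$. First I would dispose of the ``well-defined'' assertions. Since $x_{n+1}$ is a unit vector and $e_{n+1},f_1^{(n)},\dots,f_n^{(n)}$ form an orthonormal basis of $\mathbb{C}^{n+1}$, one has $\sum_{k=1}^{n}|\mu_k^{(n)}|^2+|\nu_n|^2=1$. On $E_0$ every $\mu_k^{(n)}$ is nonzero, so $\sum_k|\mu_k^{(n)}|^2>0$, whence $\rho_n=|\nu_n|<1$, while $\rho_n>0$ because $\nu_n\neq 0$ on $E_0$. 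In particular $x_{n+1}\neq e_{n+1}$, so Proposition~\ref{characteristic} applies and yields $\nu_n\neq 1$; then $\psi_n$ and the $\gamma_k^{(n)}$ make sense, and $\sum_k\gamma_k^{(n)}=(1-\rho_n^2)^{-1}\sum_k|\mu_k^{(n)}|^2=1$.

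The heart of the argument is to substitute $z=e^{\ii\eta}$ into the identity of Proposition~\ref{characteristic} and multiply by $e^{-\ii\eta/2}$. Writing $\lambda_k^{(n)}=e^{\ii\theta_k^{(n)}}$ and $\nu_n=\rho_ne^{\ii\psi_n}$, and using $e^{\ii\eta}-1=2\ii\,e^{\ii\eta/2}\sin(\eta/2)$ together with
\[
e^{-\ii\eta/2}\,(e^{\ii\eta}-1)\,\frac{\lambda_k^{(n)}}{e^{\ii\eta}-\lambda_k^{(n)}}=\sin(\eta/2)\Bigl(\cot\tfrac{\eta-\theta_k^{(n)}}{2}-\ii\Bigr),
\]
a direct computation shows that the imaginary parts produced by the sum over $k$ cancel those coming from $e^{-\ii\eta/2}(e^{\ii\eta}-\nu_n)(\overline{\nu_n}-1)$, and that the surviving real expression is precisely $-\Phi(\eta)$. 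Hence, for $\eta\in[0,2\pi]\setminus\{\theta_1^{(n)},\dots,\theta_n^{(n)}\}$,
\[
P_{n+1}(e^{\ii\eta})=-\frac{e^{\ii\eta/2}}{\overline{\nu_n}-1}\,P_n(e^{\ii\eta})\,\Phi(\eta).
\]
Granting (from the induction below) that $0<\theta_1^{(n)}<\dots<\theta_n^{(n)}<2\pi$ are exactly the eigenangles of $u_n$, the factors $P_n(e^{\ii\eta})$ and $e^{\ii\eta/2}/(\overline{\nu_n}-1)$ are nonzero on that set, so $e^{\ii\eta}$ is an eigenvalue of $u_{n+1}$ if and only if $\Phi(\eta)=0$; moreover $\Phi(0)=|1-\nu_n|^2>0$ and $\Phi(2\pi)=-|1-\nu_n|^2<0$, so $1$ is not an eigenvalue of $u_{n+1}$ and the zeros of $\Phi$ in $(0,2\pi)$ are exactly its eigenangles.

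I would then establish the monotonicity of $\Phi$ by dividing by $\sin(\eta/2)>0$: using $\cos(\eta/2-\psi_n)=\cos(\eta/2)\cos\psi_n+\sin(\eta/2)\sin\psi_n$, the identities $(1-\rho_n^2)\gamma_k^{(n)}=|\mu_k^{(n)}|^2$ and $(1+\rho_n^2)-2\rho_n\cos\psi_n=|1-\nu_n|^2$ give
\[
\frac{\Phi(\eta)}{\sin(\eta/2)}=|1-\nu_n|^2\cot\tfrac{\eta}{2}-2\rho_n\sin\psi_n+\sum_{k=1}^{n}|\mu_k^{(n)}|^2\cot\tfrac{\eta-\theta_k^{(n)}}{2},
\]
which is a constant plus a combination, with strictly positive coefficients, of $\eta\mapsto\cot(\eta/2)$ and the $\eta\mapsto\cot((\eta-\theta_k^{(n)})/2)$; each such function is continuous and strictly decreasing on every subinterval of $(0,2\pi)$ avoiding its single pole (at $0$, at $2\pi$, resp.\ at $\theta_k^{(n)}$). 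Therefore on each of the $n+1$ intervals $(0,\theta_1^{(n)}),(\theta_1^{(n)},\theta_2^{(n)}),\dots,(\theta_n^{(n)},2\pi)$, the function $\Phi/\sin(\eta/2)$, and hence $\Phi$, is strictly decreasing and runs from $+\infty$ at the left end to $-\infty$ at the right end, so it has exactly one zero there.

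To conclude I would run the induction on $n$, the statement being that $u_n$ has $n$ distinct eigenvalues, none equal to $1$, i.e.\ $0<\theta_1^{(n)}<\dots<\theta_n^{(n)}<2\pi$; for $n=1$ this is exactly the hypothesis $\theta^{(1)}_0\neq 0$ in $E_0$ (which says $u_1\neq 1$). Assuming it at level $n$, the two previous paragraphs give exactly $n+1$ zeros of $\Phi$, one strictly inside each gap, and these are eigenangles of $u_{n+1}$; since $\deg P_{n+1}=n+1$ they are all of them, simple and contained in $(0,2\pi)$, and they obey $0<\theta_1^{(n+1)}<\theta_1^{(n)}<\theta_2^{(n+1)}<\dots<\theta_{n+1}^{(n+1)}<2\pi$, which is \eqref{interlace} and carries the induction to level $n+1$. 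I expect the substitution computation of the second paragraph --- checking that the spurious imaginary parts cancel and that what survives is literally the stated $\Phi$ --- to be the only real obstacle; once that identity is secured, the remark that $\Phi(\eta)/\sin(\eta/2)$ is a constant plus a nonnegative combination of cotangents makes the interlacing essentially automatic.
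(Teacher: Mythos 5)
Your argument is correct and follows the same overall plan as the paper's proof: rewrite the identity of Proposition~\ref{characteristic} on the unit circle so that the bracket becomes (up to a nonvanishing factor) the real function $\Phi$, locate $n+1$ zeros of $\Phi$ between consecutive $\theta^{(n)}_j$, identify them with the eigenangles of $u_{n+1}$ via $\deg P_{n+1}=n+1$, and induct. Two points are worth noting. First, your normalization is slightly more direct than the paper's: you multiply by $e^{-\ii\eta/2}$ and check that the imaginary parts cancel (because $\sum_k|\mu_k^{(n)}|^2=1-\rho_n^2$), giving $P_{n+1}(e^{\ii\eta})=-\frac{e^{\ii\eta/2}}{\overline{\nu_n}-1}P_n(e^{\ii\eta})\Phi(\eta)$, whereas the paper introduces the auxiliary self-inversive polynomials $Q_n(z)=(\sqrt z)^{-n}\sqrt{\overline{P_n(0)}}\,P_n(z)$ and tracks an undetermined $\pm$ sign; the content is the same, but your version avoids the branch-of-square-root bookkeeping. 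Second, you prove strict monotonicity of $\Phi(\eta)/\sin(\eta/2)$ on each gap, by recognising it as the constant $-2\rho_n\sin\psi_n$ plus the strictly positive combination $|1-\nu_n|^2\cot(\eta/2)+\sum_k|\mu_k^{(n)}|^2\cot\frac{\eta-\theta_k^{(n)}}{2}$ of strictly decreasing cotangents, so each gap contains exactly one zero. The paper instead only records the sign behaviour of $\Phi$ at the endpoints of each interval (using $\Phi(0)=-\Phi(2\pi)>0$ and $\Phi\to\mp\infty$ at the $\theta_j^{(n)}$), which gives at least one zero per interval, and then concludes exactness by the degree count. Both are valid; your monotonicity observation is the cleaner structural reason for the interlacing and makes the uniqueness per gap visible before invoking the degree bound.
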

  \begin{proof}
The quantity $\gamma^{(n)}_k$ is well-defined since $\mu_1^{(n)} \neq 0$, which implies that
$1- |\nu_n|^2 > 0$. The equality $\sum_{k=1}^n \gamma^{(n)}_k = 1$ comes from the fact that $x_{n+1}$ has
norm $1$. By Proposition \ref{characteristic}, one has, for $z \notin \{\lambda^{(n)}_1, \dots,
 \lambda^{(n)}_n \}$,
$$P_{n+1}(z) = \frac{P_n(z)}{\overline{\nu_n} - 1} \left[ (z - \nu_n)( \overline{\nu_n} - 1) - (z-1)
 ( 1 - |\nu_n|^2)
   \sum_{k=1}^{n} \,   \gamma^{(n)}_k   \frac{ \lambda^{(n)}_k }{ z-\lambda^{(n)}_k} \right].$$
Using $\sum_{k=1}^n \gamma^{(n)}_k = 1$ we get
   \begin{align*}
  P_{n+1}(z) & = \frac{P_n(z)}{\overline{\nu_n} - 1}
 \left[ (z - \nu_n)( \overline{\nu_n} - 1) + \frac{  (z-1) ( 1 - |\nu_n|^2)}{2} - \frac{  (z-1) ( 1 - |\nu_n|^2)}{2} \,  \sum_{k=1}^{n} \,   \gamma^{(n)}_k
  \frac{ z+\lambda^{(n)}_k }{ z-\lambda^{(n)}_k} \right]
  \\ & =  \frac{P_n(z)}{\overline{\nu_n} - 1} \left[ z \overline{\nu_n} + \nu_n - \frac{(1+z)
(1 + |\nu_n|^2)}{2}
+   \frac{  (1-z) ( 1 - |\nu_n|^2)}{2} \,  \sum_{k=1}^{n} \,
   \gamma^{(n)}_k   \frac{ z+\lambda^{(n)}_k }{ z-\lambda^{(n)}_k} \right].
   \end{align*}
   \noindent
  Let us now fix arbitrarily a convention for the square root of complex numbers (for example,
for $z \neq 0$, we can define $\sqrt{z}$ as the square root of $z$ which has an argument in the
interval $(-\pi/2,\pi/2]$). We can then define, for all $z \in \mathbb{C}^*$:
  $$Q_n(z) := (\sqrt{z})^{-n} \sqrt{ \overline{P_n (0)}} \, P_n(z).$$
Since
 $$P_{n+1} (0)= P_n(0) \, \frac{\nu_n-1}{\overline{\nu_n} - 1},$$
 one has, for all $z \notin \{0, \lambda^{(n)}_1, \dots, \lambda^{(n)}_n \}$,
 \begin{align*}
 Q_{n+1}(z)  =& \pm  \sqrt{ \frac{ \overline{\nu_n} - 1}{ z (\nu_n - 1)}}
 \frac{Q_n(z)}{  \overline{\nu_n} - 1} \left[ z \overline{\nu_n} + \nu_n - \frac{(1+z)(1 + |\nu_n|^2)}{2}
 \right. \\ & \left. + \,  \frac{  (1-z) ( 1 - |\nu_n|^2)}{2}
 \,  \sum_{k=1}^{n} \,   \gamma^{(n)}_k   \frac{ z+\lambda^{(n)}_k }{ z-\lambda^{(n)}_k} \right]
 \\ =& \pm  \frac{Q_n(z)}{| \nu_n - 1|}  \left[ \sqrt{z} \overline{\nu_n} +
\frac{\nu_n}{\sqrt{z}} - \frac{(\sqrt{z}+\frac{1}{\sqrt{z}})(1 + |\nu_n|^2)}{2}
 \right. \\ & \left. - \,  \frac{  (\sqrt{z} - \frac{1}{\sqrt{z}}) ( 1 - |\nu_n|^2)}{2}
 \,  \sum_{k=1}^{n} \,   \gamma^{(n)}_k   \frac{ \sqrt{z}/ \sqrt{\lambda^{(n)}_k}
 +\sqrt{\lambda^{(n)}_k}/ \sqrt{z}  }{ \sqrt{z}/ \sqrt{\lambda^{(n)}_k}
 - \sqrt{\lambda^{(n)}_k}/ \sqrt{z}  } \right].
   \end{align*}
  \noindent
   Since $\nu_n = \rho_n e^{i \psi_n}$ for $\rho_n \in (0,1)$ and $\psi_n \in (-\pi, \pi]$,
 one has, for all $\eta \in [0,2\pi] \backslash \{\theta^{(n)}_k, 0 \leq k \leq n\}$,
   \begin{align}
   Q_{n+1}(e^{i \eta})  & = \pm
 \frac{Q_n(e^{i \eta})}{| \rho_n e^{i \psi_n} - 1|}
\, \left[ 2 \rho_n \cos \left( \eta/2 - \psi_n \right) - (1 + \rho_n^2) \, \cos(\eta/2)
 \right. \nonumber \\ & \left. - \, (1 - \rho_n^2) \, \sin(\eta/2) \,
 \sum_{k=1}^{n} \gamma_k^{(n)} \cot \left( \frac{ \eta - \theta^{(n)}_k}{2} \right) \right]. \label{Q}
 \end{align}
  \noindent
Now, let us suppose that
  \begin{equation}
  0 < \theta^{(n)}_1 < \theta^{(n)}_2 < \dots < \theta^{(n)}_n < 2\pi \label{strict}
  \end{equation}
   for some $n \geq 1$. The function $\Phi$, given in Proposition \ref{Phi}, is well-defined and
  continuous on each of the
 intervals $[0, \theta^{(n)}_1), (\theta^{(n)}_1, \theta^{(n)}_2),
 \dots (\theta^{(n)}_{n-1}, \theta^{(n)}_n), (\theta^{(n)}_n, 2 \pi]$. Now,
  $\Phi(0)= 1+\rho_n^2 - 2 \rho_n \cos \left( \psi_n \right) \geq (1-\rho_n)^2 > 0$ (note that
$\rho_n < 1$, since $E_0$ holds), for all
  $k \in \{1,\dots, n \}$,
  $\Phi(\eta)$ tends to $-\infty$ when $\eta$ tends to $\theta^{(n)}_k$ from
 below and to $+\infty$ when $\eta$ tends to $\theta^{(n)}_k$ from above
  (since $1- \rho_n^2$, $\sin (\eta/2)$ and $\gamma^{(n)}_k$ are strictly positive), and
  $\Phi(2\pi) = - \Phi(0) < 0$. One deduces that $\Phi(\eta)$
  vanishes at least once on each of the intervals
  $(0, \theta^{(n)}_1), (\theta^{(n)}_1, \theta^{(n)}_2), \dots (\theta^{(n)}_{n-1}, \theta^{(n)}_n), (\theta^{(n)}_n, 2 \pi)$, in other
  words, there exists $(\tau_k)_{1 \leq k \leq n+1}$ such that
  $$0 < \tau_1 < \theta^{(n)}_1 < \tau_2 < \theta^{(n)}_2 < \dots < \tau_n < \theta^{(n)}_n < \tau_{n+1} < 2 \pi $$
  and $\Phi(\tau_k) = 0$ for all $k \in \{1,\dots,n\}$.
 Now, by \eqref{Q}, $Q_{n+1} (e^{i \tau_k}) = 0$, and
 then $P_{n+1} (e^{i \tau_k}) = 0$, for all $k \in \{1,2,\dots,n+1\}$. Hence, necessarily,
 $\tau_k = \theta^{(n+1)}_k$, which implies \eqref{interlace}. In particular,
$$0 < \theta^{(n+1)}_1 < \theta^{(n+1)}_2 < \dots < \theta^{(n+1)}_n  < \theta^{(n+1)}_{n+1} < 2 \pi, $$
and by induction, \eqref{strict} and \eqref{interlace} hold for all $n \geq 1$.
  \end{proof}
\noindent
In the following proposition, we define an event, which is almost surely satisfied under the
Haar measure, and which is involved in a crucial way in our proof of Proposition \ref{main}.
\begin{prop}
Let us suppose that $(u_n)_{n \geq 1}$ is a virtual isometry following the Haar measure, and
let us take the notation above. Then, the event
$E := E_0 \, \cap \, E_1 \, \cap E_2 \, \cap \, E_3$
holds almost surely, where
\begin{align*}
E_1 :=&\{\exists n_0 \geq 1, \, \forall n \geq n_0, \, \rho_n \leq n^{-0.4} \},\\
E_2 :=& \{\exists n_0 \geq 1, \, \forall n \geq n_0, \, \forall
k \in \{1,\dots,n\}, \, \gamma^{(n)}_k \leq n^{-0.99}\},\\
E_3 :=& \{\exists n_0 \geq 1, \, \forall n \geq n_0, \, \forall
k \in \mathbb{Z}, \, n^{-1.7} \leq \theta^{(n)}_{k+1} - \theta^{(n)}_k \leq n^{-0.9}\}.
\end{align*}
\end{prop}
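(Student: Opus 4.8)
The plan is to treat the four events separately, applying the Borel--Cantelli lemma to each, and to use the description of the Haar measure given by Proposition~\ref{Haarvi}: the vectors $x_n=u_n(e_n)$ are independent, $x_n$ uniform on the complex unit sphere of $\mathbb{C}^n$. I work throughout on the almost sure event that, for every $n\ge 1$, $u_n$ has $n$ pairwise distinct eigenvalues, none equal to $1$; then the eigenvectors $f_k^{(n)}$, the moduli $|\mu_k^{(n)}|$ and the eigenangles $\theta_k^{(n)}\in(0,2\pi)$ are well defined, and the periodic extension $\theta^{(n)}_{k+n}=\theta^{(n)}_k+2\pi$ makes the family $(\theta^{(n)}_{k+1}-\theta^{(n)}_k)_{k\in\mathbb{Z}}$ run exactly through the $n$ circular gaps of the spectrum of $u_n$. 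The key distributional input is that, conditionally on $u_n$, the coordinate vector $(\mu_1^{(n)},\dots,\mu_n^{(n)},\nu_n)$ of $x_{n+1}$ in the orthonormal basis $(f_1^{(n)},\dots,f_n^{(n)},e_{n+1})$ is uniform on the unit sphere of $\mathbb{C}^{n+1}$; hence $\rho_n^2=|\nu_n|^2$ follows a $\mathrm{Beta}(1,n)$ law, and conditionally on $\nu_n$ the normalized vector $(\mu_k^{(n)})_{1\le k\le n}/\sqrt{1-\rho_n^2}$ is uniform on the unit sphere of $\mathbb{C}^n$, so each $\gamma_k^{(n)}=|\mu_k^{(n)}|^2/(1-\rho_n^2)$ follows a $\mathrm{Beta}(1,n-1)$ law.

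For $E_0$, each of $\{\nu_n=0\}$, $\{\mu_k^{(n)}=0\}$ and $\{\theta_0^{(1)}=0\}=\{x_1=e_1\}$ is a null event, since the relevant coordinate of a uniform sphere vector has an absolutely continuous law; a countable union of null events is null. For $E_1$, $\Prob[\rho_n>n^{-0.4}]=\Prob[|\nu_n|^2>n^{-0.8}]=(1-n^{-0.8})^{n}\le e^{-n^{0.2}}$, which is summable, so $\rho_n\le n^{-0.4}$ eventually almost surely. For $E_2$, $\Prob[\gamma_k^{(n)}>n^{-0.99}]=(1-n^{-0.99})^{n-1}\le e^{-n^{0.01}/2}$ for $n$ large, and a union bound over $k\in\{1,\dots,n\}$ gives $\Prob[\exists k:\gamma_k^{(n)}>n^{-0.99}]\le n\,e^{-n^{0.01}/2}$, again summable; Borel--Cantelli yields $E_2$.

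The substantial part is $E_3$, for which I would use that the spectrum of $u_n$ (Haar on $U(n)$) is the $\mathrm{CUE}$, the determinantal point process on the circle whose correlation functions are the minors of $K_n(\alpha,\beta)=\frac{1}{2\pi}\,\frac{\sin(n(\alpha-\beta)/2)}{\sin((\alpha-\beta)/2)}$, with $K_n(\alpha,\alpha)=n/(2\pi)$. For the upper bound $\theta^{(n)}_{k+1}-\theta^{(n)}_k\le n^{-0.9}$, cover the circle by $O(n^{0.9})$ arcs of length $\tfrac12 n^{-0.9}$, so that a gap exceeding $n^{-0.9}$ contains at least one of them; for any arc $J$, the probability that $J$ contains no eigenvalue is the Fredholm determinant $\det(1-\1_J K_n\1_J)$, and since $\1_J K_n\1_J$ has spectrum contained in $[0,1]$ one gets $\det(1-\1_J K_n\1_J)\le e^{-\operatorname{Tr}(\1_J K_n\1_J)}=e^{-n|J|/(2\pi)}=e^{-n^{0.1}/(4\pi)}$, so summing the union bound over $n$ gives a convergent series. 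For the lower bound $\theta^{(n)}_{k+1}-\theta^{(n)}_k\ge n^{-1.7}$, a gap below $\delta:=n^{-1.7}$ forces two eigenangles within $\delta$, so it suffices to bound the expected number of such pairs, $\int_0^{2\pi}\!\int_0^{2\pi}\1[0<|\alpha-\beta|<\delta]\,\rho_2(\alpha,\beta)\,d\alpha\,d\beta$, where $\rho_2(\alpha,\beta)=K_n(\alpha,\alpha)K_n(\beta,\beta)-K_n(\alpha,\beta)^2$. Here the level repulsion is indispensable: a routine Taylor estimate gives $\rho_2(\alpha,\beta)=O(n^4(\alpha-\beta)^2)$ uniformly for $|\alpha-\beta|\le 1/n$ (a bound discarding the term $-K_n(\alpha,\beta)^2$ would give only $O(n^2)$ and would fail to be summable after integration), so the integral is $O(n^4\delta^3)=O(n^{-1.1})$, which is summable. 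Two more applications of Borel--Cantelli give $E_3$ almost surely, and then $E=E_0\cap E_1\cap E_2\cap E_3$ holds almost surely as a finite intersection of almost sure events. The main obstacle is precisely this two-sided control of the extreme spacings of a Haar-unitary spectrum; the lower bound on the gaps cannot be obtained by crude union bounds and genuinely uses the quadratic vanishing of the two-point function at the diagonal.
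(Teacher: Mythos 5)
Your proof is correct and follows essentially the same route as the paper: Beta laws for $\rho_n^2$ and $\gamma_k^{(n)}$ coming from uniformity on the complex sphere, Borel--Cantelli, a gap-probability bound via the Fredholm determinant/trace estimate $\det(1-\1_J K_n\1_J)\le e^{-n|J|/2\pi}$ with a covering of the circle for the upper spacing bound in $E_3$, and the quadratic vanishing $\rho_2^{(n)}(u,v)=O(n^4(u-v)^2)$ of the two-point function for the lower spacing bound. The only cosmetic differences are that you use exact Beta tails for $E_1,E_2$ where the paper passes through a representation as a ratio of sums of exponential variables, and that you invoke the general determinantal gap-probability formula where the paper writes out the Andreiev--Heine identity.
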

\begin{proof}
It is easy to check that under the Haar measure on virtual isometries:
\begin{itemize}
\item The angle $\theta^{(1)}_0$ is uniform on $(-2 \pi, 0]$;
\item For all $n \geq 1$, $\rho_n$ is the square root of a beta variable of parameters $1$ and $n$;
\item For all $n \geq 1$, $\psi_n$ is uniform on $(-\pi, \pi]$;
\item For all $n \geq 1$, $k \in \{1,2,\dots, n\}$, $\gamma^{(n)}_k = |\xi^{(n)}_k|^2$, where the
 vector $(\xi^{(n)}_k)_{1 \leq k \leq n}$ is
uniform on the complex sphere of dimension $n$.
\end{itemize}
Moreover, the random variables $\theta^{(1)}_0$, $(\rho_n)_{n \geq 1}$, $(\psi_n)_{n \geq 1}$, and
the random vectors $(\xi^{(n)})_{n \geq 1}$ are
independent. It is immediate to check that the condition $E_0$ holds almost surely. Hence, by
 Borel-Cantelli lemma, it is sufficient to check that for $n$ going to infinity:
\begin{align}
&\mathbb{P} [\rho_n > n^{-0.4}] = O(n^{-1.1}), \label{EE1}\\
&\mathbb{P} [\exists k \in \{1,\dots, n\}, \gamma^{(n)}_k > n^{-0.99} ] =  O(n^{-1.1}),\label{EE2}\\
&\mathbb{P} [\exists k \in \mathbb{Z} , \theta^{(n)}_{k+1} - \theta^{(n)}_k > n^{-0.9}] = O(n^{-1.1}),\label{EE3}\\
&\mathbb{P} [\exists k \in \mathbb{Z} , \theta^{(n)}_{k+1} - \theta^{(n)}_k < n^{-1.7}] = O(n^{-1.1}). \label{EE4}
\end{align}

 Now, for all $n \geq 1$, $k \in  \{1,\dots,n+1\}$, $\rho_n^2$ and $\gamma^{(n+1)}_k$ have the same law as
$$ \frac{e_1}{e_1 + e_2 + \dots + e_{n+1}},$$
where $(e_k)_{1 \leq k \leq n+1}$ are independent standard exponential variables. Now, it is a classical
result that
the probabilities  $\mathbb{P} [e_1 \geq n^{0.001}]$ and $\mathbb{P}[e_1 + \dots + e_n \leq n/2]$ decrease
to zero faster than any negative power of $n$ when $n$ goes to infinity. Hence,
$$\mathbb{P} \left[ \frac{e_1}{e_1 + e_2 + \dots + e_{n+1}} > n^{-0.99} \right]  = O(n^{-28}),$$
which easily implies \eqref{EE1} and \eqref{EE2}.
Now, the process of the eigenvalues of a random matrix following the Haar measure
 on $U(n)$ is a determinantal process, with kernel
equal to $K^{(n)}(x)=\sin (nx/2)/ [2 \pi \sin(x/2)]$, where $x$ denotes the difference
 between the two eigenangles which are considered.
One deduces the following estimate for the two-point correlation function
$$
\rho_2^{(n)}(u,v)=K^{(n)}(0)^2-K^{(n)}(u-v)^2=\OO\left(n^4(u-v)^2\right).
$$
The
probability that there exist two eigenangles with distance
smaller than or equal to $x$ is therefore dominated by
$$
\iint_{|u-v|<x}\rho_2^{(n)}(u,v)\dd u\dd v=\OO(n^4x^3),
$$
which implies \eqref{EE4}. In order
 to prove \eqref{EE3}, let us denote by $I$ a measurable
subset of the interval $[0, 2 \pi)$. The probability $\mathbb{P}^{(n)}_I$ that all
 the eigenangles of $u_n$ are
 in $I$ is (by the Andreiev-Heine identity, see e.g. \cite{Sosh})
$$
\mathbb{P}^{(n)}_I
=
\det (M^{n,I}_{j,k})_{1 \leq j,k, \leq n},\
\mbox{where}\
M^{n,I}_{j,k} = \frac{1}{2 \pi} \int_I e^{i(j-k)\theta} \dd \theta.$$
Since for all $\theta \in [0, 2\pi)$, the matrix $(e^{i(j-k) \theta} )_{1 \leq j,k \leq n}$ is
 hermitian and positive (its rank is one and its trace is $n$),
 $M^{n,I}$ and $M^{n,I^c}$ are also hermitian and positive. Moreover,
$M^{n,I} + M^{n, I^c} = \Id_{n}$, hence, the eigenvalues $(\tau_j)_{1 \leq j \leq n}$ of $M^{n, I^c}$ are
in the interval $[0,1]$.
One deduces
$$
\mathbb{P}^{(n)}_I  = \det(M^{n,I})
= \prod_{j=1}^n (1 - \tau_j) \leq \exp \left( - \sum_{j=1}^n \tau_j \right) \leq \exp \left( - \operatorname{Tr} (M^{n, I^c}) \right)
= \exp \left( - n \lambda (I^c)/ 2 \pi \right),
$$
where $\lambda$ is Lebesgue measure. Now, let us choose an integer $q \in [ 13 n^{0.9}, 14 n^{0.9}]$. For
 all $l \in \{0,1, \dots q-1\}$,
$$ \mathbb{P}^{(n)}_{[2 \pi l/q , 2 \pi (l+1)/ q )^c} \leq e^{ -n/q} \leq e^{- n^{0.1}/14}, $$
and then, with probability greater than or equal to $1 - 14 n^{0.9} e^{- n^{0.1}/14}$, $u_n$ has at least
an eigenangle in each interval of the
form $[2 \pi l/q , 2 \pi (l+1)/ q )$.
In this case, the maximal distance between two eigenangles is smaller than or equal to
$4 \pi / q \leq 4 \pi n^{-0.9} /13$, which implies \eqref{EE3}.
\end{proof}

Our interest in the event $E$ lies in the following result:
\begin{prop} \label{convergence}
Let $(u_n)_{n \geq 1}$ be a virtual isometry such that the event $E$ holds. Let us
extend the notation $\gamma^{(n)}_k$ to all the values $k \in \mathbb{Z}$, in the unique way such that
$\gamma^{(n)}_{k+n} = \gamma^{(n)}_k$.  Then, for all $k \in \mathbb{Z}$, there exists 
 $L \neq 0$ such that
$$ \theta^{(n)}_k \, \exp \left( \sum_{p=1}^{n-1}  \gamma^{(p)}_k \right) = L + O(n^{-\epsilon})$$
when $n$ goes to infinity, $\epsilon > 0$ being a universal constant. 
\end{prop}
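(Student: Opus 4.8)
\emph{Proof proposal.} The plan is to show that the sequence
$$b_n := \log\theta^{(n)}_k + \sum_{p=1}^{n-1}\gamma^{(p)}_k$$
(well defined for $n$ large, since on $E$ every eigenangle lies in $(0,2\pi)$) satisfies $b_{n+1}-b_n = O(n^{-1-\epsilon})$ for some universal $\epsilon>0$. Summing, $(b_n)$ converges and $b_n = \log L + O(n^{-\epsilon})$ with $L := \exp(\lim_m b_m)\in(0,\infty)$ (finite because the increments are absolutely summable), so exponentiating gives $\theta^{(n)}_k\exp\big(\sum_{p=1}^{n-1}\gamma^{(p)}_k\big) = L + O(n^{-\epsilon})$, which is the statement. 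I will treat $k\geq 1$; for $k\leq 0$ one argues identically after localising near $2\pi$ instead of near $0$ (there the cosine part of the regular term below is $-1+o(1)$ rather than $1+o(1)$, but the accompanying sign change in the singular term cancels it), replacing $\theta^{(n)}_k$ by $|\theta^{(n)}_k|$ and allowing $L<0$.

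The key input is a sharp recursion for $\theta^{(n+1)}_k$ extracted from Proposition~\ref{Phi}: $\theta^{(n+1)}_k$ is the unique zero of $\Phi$ in $(\theta^{(n)}_{k-1},\theta^{(n)}_k)$. Isolating the singular $j=k$ term and writing $\eta=\theta^{(n)}_k-\delta$ with $0<\delta<g:=\theta^{(n)}_k-\theta^{(n)}_{k-1}$, the equation $\Phi(\eta)=0$ reads
$$\cot(\delta/2) = \frac{R(\theta^{(n)}_k-\delta)}{(1-\rho_n^2)\,\sin\big((\theta^{(n)}_k-\delta)/2\big)\,\gamma^{(n)}_k},$$
where $R(\eta):=(1+\rho_n^2)\cos(\eta/2)-2\rho_n\cos(\eta/2-\psi_n)+(1-\rho_n^2)\sin(\eta/2)\sum_{j\neq k}\gamma^{(n)}_j\cot\big((\eta-\theta^{(n)}_j)/2\big)$ is the regular part. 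First I would pin $\delta$ inside the interval: evaluating $\Phi$ at $\theta^{(n)}_k-g/2$ gives $R(\theta^{(n)}_k-g/2) - O(n^{-0.1}) > 0$ for large $n$ (using $E_1,E_2,E_3$ and $\theta^{(n)}_k=O(n^{-0.9})$ to bound the $\cot(g/4)\lesssim n^{1.7}$ term), while $\Phi\to-\infty$ as $\eta\uparrow\theta^{(n)}_k$; since $\theta^{(n+1)}_k$ is the \emph{unique} zero of $\Phi$ in $(\theta^{(n)}_{k-1},\theta^{(n)}_k)$, this forces $0<\delta\leq g/2$.

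Next I would establish the uniform estimate $R(\eta)=1+O(n^{-\epsilon_1})$ for $\eta\in[\theta^{(n)}_k-g/2,\theta^{(n)}_k)$ and some universal $\epsilon_1>0$. The cosine terms give $1+O(\rho_n)+O((\theta^{(n)}_k)^2)=1+O(n^{-0.4})$ by $E_1$. For the sum I use $|\gamma^{(n)}_j|\leq n^{-0.99}$ ($E_2$), $\sin(\eta/2)=O(\theta^{(n)}_k)=O(n^{-0.9})$, and the two-sided spacing of $E_3$: consecutive eigenangles are at least $n^{-1.7}$ apart and at most $O(n^{1.7})$ of them lie within distance $1$ of $\eta$, so (since $\delta\leq g/2$ keeps $\eta$ away from $\theta^{(n)}_{k\pm1}$ by at least a constant times $n^{-1.7}$) the $m$-th closest $\theta^{(n)}_j$ with $j\neq k$ sits at distance at least a constant times $m\,n^{-1.7}$, whence $\sum_{j\neq k}|\cot((\eta-\theta^{(n)}_j)/2)|=O(n^{1.7}\log n)$ and the whole sum is $O(n^{-0.99}\cdot n^{-0.9}\cdot n^{1.7}\log n)=O(n^{-\epsilon_1})$. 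Feeding this back, with $\cot(\delta/2)=\tfrac2\delta(1+O(\delta^2))$, $\delta=O(n^{-0.9})$, $\sin((\theta^{(n)}_k-\delta)/2)=\tfrac{\theta^{(n)}_k-\delta}{2}(1+O(n^{-1.8}))$ and $1-\rho_n^2=1+O(n^{-0.8})$, I solve to get $\delta=\theta^{(n)}_k\gamma^{(n)}_k(1+O(n^{-\epsilon_1}))$, hence $\theta^{(n+1)}_k/\theta^{(n)}_k = 1-\gamma^{(n)}_k+O(n^{-0.99-\epsilon_1})$ and finally $\log(\theta^{(n+1)}_k/\theta^{(n)}_k) = -\gamma^{(n)}_k + O(n^{-1-\epsilon})$, i.e. $b_{n+1}-b_n=O(n^{-1-\epsilon})$, on shrinking $\epsilon_1$ so that $\epsilon:=\epsilon_1-\tfrac1{100}>0$.

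The hard part is precisely this uniform control of the regular part $R$ near the singularity $\eta\approx\theta^{(n)}_k$: the sum $\sum_{j\neq k}\gamma^{(n)}_j\sin(\eta/2)\cot((\eta-\theta^{(n)}_j)/2)$ contains $\cot$-contributions from the neighbouring eigenangles $\theta^{(n)}_{k\pm1}$ that may be as large as $n^{1.7}$, and the estimate only closes because the exponents built into the event $E$ (the bound $\rho_n\leq n^{-0.4}$, the bound $\gamma^{(n)}_j\leq n^{-0.99}$, and the gaps trapped between $n^{-1.7}$ and $n^{-0.9}$) are calibrated to beat these against the smallness of $\sin(\eta/2)\sim\theta^{(n)}_k$ and of $\gamma^{(n)}_k$. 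A secondary technical point, handled by the sign computation above, is to rule out that $\theta^{(n+1)}_k$ drifts toward the far endpoint $\theta^{(n)}_{k-1}$ of its interval, where $R$ would blow up and the recursion degenerate.
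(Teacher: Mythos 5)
Your proof is correct and follows essentially the same path as the paper's: start from $\Phi(\theta^{(n+1)}_k)=0$ from Proposition~\ref{Phi}, isolate the singular $j=k$ contribution, use the bounds packaged in the event $E$ (namely $\rho_n\leq n^{-0.4}$, $\gamma^{(n)}_j\leq n^{-0.99}$, $n^{-1.7}\leq\theta^{(n)}_{j+1}-\theta^{(n)}_j\leq n^{-0.9}$) to show the rest is $1+O(n^{-\epsilon_1})$, solve to get $\theta^{(n)}_k-\theta^{(n+1)}_k=\theta^{(n)}_k\gamma^{(n)}_k(1+O(n^{-\epsilon_1}))$, conclude $\log(\theta^{(n+1)}_k/\theta^{(n)}_k)=-\gamma^{(n)}_k+O(n^{-1-\epsilon})$, and sum the telescoping increments of $\log|\theta^{(n)}_k|+\sum_p\gamma^{(p)}_k$. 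The only organizational difference lies in the localization step: you place the new root in the right half of $(\theta^{(n)}_{k-1},\theta^{(n)}_k)$ by a direct sign check of $\Phi$ at the midpoint, whereas the paper first linearizes $\cot$ to $2/(\theta^{(n+1)}_k-\theta^{(n)}_j)$, argues by pigeonhole that one of the three neighboring summands must be $<-1/4$, and rules out $j=k\pm1$ via interlacing. Both arguments achieve the same end, and yours is arguably more direct. One point to tighten: as written, $\Phi$ sums $j$ over $\{1,\dots,n\}$ with $\theta^{(n)}_j\in(0,2\pi)$, so for $\eta$ near $0$ the terms with $\theta^{(n)}_j$ near $2\pi$ are \emph{also} singular; the paper handles this explicitly by invoking the $\pi$-periodicity of $\cot$ to reindex over a window of $n$ consecutive integers $j$ with $\theta^{(n)}_j\in(\theta^{(n+1)}_k-\pi,\theta^{(n+1)}_k+\pi]$ (extending $\theta^{(n)}_j$ periodically to $j\in\ZZ$). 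Your ``cyclic distance'' bookkeeping implicitly does this, but it should be stated, since the $E_3$ gap bound is only available in the cyclic parametrization.
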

\begin{proof}
Let us suppose $n \geq |k|+1$. Then,
$\theta^{(n+1)}_k \in (\theta^{(n)}_{k-1}, \theta^{(n)}_k)$ for $k \geq 2$, $\theta^{(n+1)}_k \in (0, \theta^{(n)}_1)$ for
$k = 1$, $\theta^{(n+1)}_k \in (\theta^{(n)}_0, 0) = (\theta^{(n)}_n - 2 \pi, 0)$ for $k=0$,
and $\theta^{(n+1)}_k \in (\theta^{(n)}_{k}, \theta^{(n)}_{k+1}) = (\theta^{(n)}_{n+k} - 2 \pi,
\theta^{(n)}_{n+k+1} - 2 \pi) $ for $k \leq -1$. Moreover,  by Proposition \ref{Phi},
 $$(1 + \rho_n^2) \, \cos(\theta^{(n+1)}_k/2) -  2 \rho_n \cos \left(  (\theta^{(n+1)}_k/2) - \psi_n \right) $$
 \begin{equation}
  + \, (1 - \rho_n^2) \, \sin(\theta^{(n+1)}_k/2) \, \sum_{j=1}^{n} \gamma_j^{(n)} \cot
  \left( \frac{ \theta^{(n+1)}_k - \theta^{(n)}_j}{2} \right) = 0. \label{equ}
 \end{equation}
for  $k \geq 1$, and
$$(1 + \rho_n^2) \, \cos(\theta^{(n+1)}_{n+k+1}/2) -  2 \rho_n \cos \left(  (\theta^{(n+1)}_{n+k+1}/2) - \psi_n \right) $$
 \begin{equation}
  + \, (1 - \rho_n^2) \, \sin(\theta^{(n+1)}_{n+k+1}/2) \, \sum_{j=1}^{n} \gamma_j^{(n)}
  \cot \left( \frac{ \theta^{(n+1)}_{n+k+1} - \theta^{(n)}_j}{2} \right) = 0.\nonumber
 \end{equation}
 for $k \leq 0$, which also implies \eqref{equ}, since $\sin(x+\pi) = -\sin(x)$ and $\cos(x+\pi) = -\cos(x)$ for all $x \in \mathbb{R}$.
Then, by the periodicity of the cotangent, one has:
 $$(1 + \rho_n^2) \, \cos(\theta^{(n+1)}_k/2) -  2 \rho_n \cos \left(  (\theta^{(n+1)}_k/2) - \psi_n \right) $$
 \begin{equation}
  + \, (1 - \rho_n^2) \, \sin(\theta^{(n+1)}_k/2) \, \sum_{j \in J} \gamma_j^{(n)}
  \cot \left( \frac{ \theta^{(n+1)}_k - \theta^{(n)}_j}{2} \right) = 0.  \end{equation}
  for any set $J$ consisting of $n$ consecutive integers.
   From now, we choose $J$ equal to the set of integers $j$
  such that $\theta^{(n)}_j \in (\theta^{(n+1)}_k - \pi, \theta^{(n+1)}_k + \pi]$.
Since the condition $E$ holds, one has the following estimates (for $k$ fixed and $n$ going to infinity):
$$1 + \rho_n^2 = 1 + O(n^{-0.8}),$$
$$ \left| 2 \rho_n \cos \left(  (\theta^{(n+1)}_k/2) - \psi_n \right) \right| \leq 2 \rho_n = O(n^{-0.4}),$$
$$ \theta^{(n+1)}_k =  O(n^{-0.9}),$$
$$ \cos(\theta^{(n+1)}_k/2) = 1 + O(n^{-1.8}),$$
$$ (1 + \rho_n^2) \, \cos(\theta^{(n+1)}_{k}/2) -  2 \rho_n \cos \left(  (\theta^{(n+1)}_{k}/2) -
\psi_n \right) = 1 + O(n^{-0.4}),$$
 and then,
 $$ (1 - \rho_n^2) \, \sin(\theta^{(n+1)}_{k}/2) \, \sum_{j \in J} \gamma_j^{(n)} \cot \left(
 \frac{ \theta^{(n+1)}_{k} - \theta^{(n)}_j}{2} \right) = - 1 + O(n^{-0.4})$$
 Since $1-\rho_n^2 = 1 + O(n^{-0.8})$, and $ \sin(\theta^{(n+1)}_{k}/2) =
 \left(\theta^{(n+1)}_{k}/2 \right) \left( 1 + O(n^{-1.8}) \right)$,
 one deduces:
 $$ \theta^{(n+1)}_{k} \, \sum_{j \in J} \gamma_j^{(n)} \cot \left( \frac{ \theta^{(n+1)}_{k} -
 \theta^{(n)}_j}{2} \right) = - 2 + O(n^{-0.4})$$
 Now, since the function $x \rightarrow \cot(x) - 1/x$ is bounded on the interval $[-\pi/2, \pi/2]$,
 $$ \theta^{(n+1)}_{k} \, \sum_{j \in J} \gamma_j^{(n)} \left[ \cot \left( \frac{ \theta^{(n+1)}_{k} -
 \theta^{(n)}_j}{2} \right) - \frac{2}{ \theta^{(n+1)}_{k} - \theta^{(n)}_j} \right]$$
 is dominated by
 $$|\theta^{(n+1)}_{k}| \, \sum_{j \in J} \gamma_j^{(n)} = |\theta^{(n+1)}_k| = O(n^{-0.9}),$$
 which implies:
  $$ \theta^{(n+1)}_{k} \, \sum_{j \in J}  \frac{\gamma_j^{(n)} }{ \theta^{(n+1)}_{k} - \theta^{(n)}_j} = - 1 + O(n^{-0.4}).$$
 Note that $$\theta^{(n)}_{k+n+1} = \theta^{(n)}_{k+1} + 2 \pi \geq \theta^{(n+1)}_k + 2 \pi$$
 and
 $$\theta^{(n)}_{k-n-1} = \theta^{(n)}_{k-1} - 2 \pi \leq \theta^{(n+1)}_k - 2 \pi,$$
 hence, all the elements of $J$ are included in the interval $[k-n, k+n]$. Moreover,  for $n$ large enough and for all integers $p \geq 1$:
 $$\theta^{(n)}_{k + p + 1} - \theta^{(n+1)}_k \geq \theta^{(n)}_{k + p + 1} - \theta^{(n)}_{k+1} \geq p  n^{-1.7}$$
 and
 $$\theta^{(n+1)}_{k} - \theta^{(n)}_{k - p - 1} \geq \theta^{(n)}_{k - 1} - \theta^{(n)}_{k - p - 1} \geq p  n^{-1.7}.$$
 One deduces that, for $n$ large enough:
 \begin{align*}
  \left | \theta^{(n+1)}_{k} \, \sum_{j \in J, |j - k| > 1}  \frac{\gamma_j^{(n)} }{ \theta^{(n+1)}_{k} - \theta^{(n)}_j} \right|
 & \leq 2 \,|\theta^{(n+1)}_{k}| \, \left( \sup_{1 \leq j \leq n}  \gamma_j^{(n)} \right) \, \left(\sum_{p=1}^n \frac{1}{p n^{-1.7}} \right)
 \\ & = O (n^{-0.9}) \, . \, O(n^{-0.99}) \, . \, O (n^{1.7} \log n) = O(n^{-0.1}).
 \end{align*}
 On the other hand
 $$ 0 \leq \theta^{(n+1)}_{k} - \theta^{(n)}_{k - 1} \leq \theta^{(n)}_{k+1} - \theta^{(n)}_{k-1} = O(n^{-0.9})$$
 and
 $$ 0 \leq \theta^{(n)}_{k+1} - \theta^{(n+1)}_{k} \leq \theta^{(n)}_{k+1} - \theta^{(n)}_{k-1} = O(n^{-0.9}),$$
hence, $k-1, k, k+1 \in J$ for $n$ large enough. One deduces:
 $$\theta_k^{(n+1)} \left( \frac{\gamma^{(n)}_{k-1}}{ \theta^{(n+1)}_k - \theta^{(n)}_{k-1}} +  \frac{\gamma^{(n)}_{k}}{ \theta^{(n+1)}_k - \theta^{(n)}_{k}}
 +  \frac{\gamma^{(n)}_{k+1}}{ \theta^{(n+1)}_k - \theta^{(n)}_{k+1}} \right) = -1 + O(n^{-0.1}).$$
 Therefore, for $n$ large enough, there exists $j \in \{k-1,k, k+1\}$ such that
 $$\frac{\theta^{(n+1)}_k \gamma^{(n)}_{j}}{  \theta^{(n+1)}_k - \theta^{(n)}_{j}}  < -1/4.$$
 One deduces that $\theta^{(n+1)}_k - \theta^{(n)}_{j} = O(n^{-1.89})$, since $\theta^{(n+1)}_k
 \gamma^{(n)}_{j} =  O(n^{-1.89})$.
Now, let us suppose that $j = k+1$. In this case, $\theta^{(n+1)}_k - \theta^{(n)}_j$ is negative,
and then $\theta^{(n+1)}_k$ should be positive, and $k$ should
be strictly positive. Then, $\theta^{(n)}_k - \theta^{(n+1)}_k > 0$ and
 $$\theta^{(n)}_j - \theta^{(n+1)}_{k} \geq (\theta^{(n)}_{k+1} - \theta^{(n)}_{k} ) +
 (\theta^{(n)}_k - \theta^{(n+1)}_k)
 \geq n^{-1.7},$$
 which is a contradiction for $n$ large enough. Similarly, if  $j = k-1$,  $\theta^{(n+1)}_k - \theta^{(n)}_j$
 is positive, $\theta^{(n+1)}_k$ is negative,
 $k \leq 0$,  $\theta^{(n)}_k - \theta^{(n+1)}_k < 0$ and $\theta^{(n+1)}_k - \theta^{(n)}_j \geq n^{-1.7}$,
 which is again a contradiction.
 Therefore, $j = k$ for $n$ large enough and $\theta^{(n+1)}_k - \theta^{(n)}_k = O(n^{-1.89})$.
 One deduces that for $j \in \{k-1,k+1\}$,
 $$|\theta^{(n+1)}_k - \theta^{(n)}_{j} | \geq |\theta^{(n)}_k - \theta^{(n)}_{j} | -
 |\theta^{(n+1)}_k - \theta^{(n)}_{k} | \geq n^{-1.7} - O(n^{-1.89})
 \geq n^{-1.7}/2$$
 if $n$ is large enough, and then
 $$\frac{\theta^{(n+1)}_k \gamma^{(n)}_{j}}{  \theta^{(n+1)}_k - \theta^{(n)}_{j}}  = O(n^{-0.1}).$$
Consequently, one has:
$$\frac{\theta^{(n+1)}_k \gamma^{(n)}_{k}}{  \theta^{(n+1)}_k - \theta^{(n)}_{k}}  = -1 + O(n^{-0.1}),$$
and since $\theta^{(n+1)}_k \neq 0$,
$$\frac{\gamma^{(n)}_{k}}{1 - (\theta^{(n)}_k / \theta^{(n+1)}_k )} = -1 + O(n^{-0.1}),$$
which implies
$$\gamma^{(n)}_k = \left( \frac{\theta^{(n)}_k}{\theta^{(n+1)}_k} - 1\right) \left(1 + O(n^{-0.1}) \right).$$
In particular, $(\theta^{(n)}_k/\theta^{(n+1)}_k) - 1$ is equivalent to $\gamma^{(n)}_k$, and then,
dominated by
$n^{-0.99}$. One deduces:
$$\gamma^{(n)}_k = \left( \frac{\theta^{(n)}_k}{\theta^{(n+1)}_k} - 1\right) + O(n^{-0.99}) \,. O(n^{-0.1}),$$
$$ \frac{\theta^{(n)}_k}{\theta^{(n+1)}_k}  = 1 + \gamma^{(n)}_k + O(n^{-1.09}),$$
and then
$$\log \left(\frac{\theta^{(n)}_k}{\theta^{(n+1)}_k} \right) = \gamma^{(n)}_k + O(n^{-1.09}).$$
Now, if one sets
$$L_n := \log (|\theta^{(n)}_k|) + \sum_{p=1}^{n-1} \gamma^{(p)}_k,$$
then $L_{n+1} - L_n = O(n^{-1.09})$. One deduces that $(L_n)_{n \geq 1}$ converges to a limit 
$L_{\infty}$ when $n$ goes to infinity, with $L_{\infty} - L_n = O(n^{-0.09})$. Taking the exponential
ends the proof of Proposition \ref{convergence}, for $\epsilon = 0.09$. 
\end{proof}
\noindent
We have now all the ingredients involved in the proof of Proposition \ref{main}. Indeed,
by Proposition \ref{convergence}, there exists a random variable $L \neq 0$ such that almost surely, for 
$n$ going to infinity, 
\begin{equation} \theta^{(n)}_k \, \exp \left( \sum_{p=1}^{n-1}  \gamma^p_k \right) =  n \theta^{(n)}_k
\, \exp \left( - \log n + \sum_{p=1}^{n-1}  \frac{1}{p} \right)   \exp \left( M_n \right)  =  L + O(n^{-\epsilon}), \label{lll}
\end{equation}
where
$$M_n := \sum_{p=1}^{n-1} \left( \gamma^{(p)}_k - \frac{1}{p} \right),$$
since the event $E$ holds almost surely. Now, the variables $(\gamma^{(n)}_k)_{n \geq 1}$ are
 independent, with expectation $1/n$, and then the process $(M_n)_{n \geq 1}$ is a martingale with
respect to the filtration generated by  $(\gamma^{(n)}_k)_{n \geq 1}$. Moreover, with the notation above,
\begin{align*}
\mathbb{E} \left[ \left( \gamma^{(n)}_k - \frac{1}{n} \right)^2 \right] &
= \mathbb{E} \left[ \left( \frac{e_1}{e_1 + e_2 + \dots e_n} - \frac{1}{n} \right)^2 \right]  \\
& =  \mathbb{E} \left[ \left( \frac{(n-1)e_1 - e_2 - e_3 - \dots - e_n}{n(e_1 + e_2 + \dots e_n)}  \right)^2 \right] \\
& \leq \mathbb{P} [ e_1 + \dots + e_n \leq n/2 ]+
\frac{4}{n^4} \, \mathbb{E} \left[ \left((n-1)e_1 - e_2 - e_3 - \dots - e_n \right)^2 \right]  \\ &
\leq  O(n^{-28})+ \frac{4}{n^4} \, \left[ \operatorname{Var} ((n-1)e_1) +
\operatorname{Var} (e_2) + \dots + \operatorname{Var} (e_n) \right]\\
& =  O(n^{-28}) + \frac{4n(n-1)}{n^4} = O(1/n^2).
\end{align*}
Hence, the martingale $(M_n)_{n \geq 1}$ is bounded in $L^2$, and then, converges
 almost surely (and in $L^2$) to a limit
random variable $M_{\infty}$. More precisely, for $n \geq 1$, 
$$\mathbb{E} [(M_{\infty} - M_n)^2] = \sum_{m = n}^{\infty} 
\mathbb{E} \left[ \left( \gamma^{(m)}_k - \frac{1}{m} \right)^2 \right] 
= O(1/n),$$
and by applying Doob's inequality to the martingale $(M_{2^q + m})_{m \geq 0}$, for $q \geq 0$,
\begin{align*}
\mathbb{E} \left[ \sup_{n \geq 2^q} (M_{\infty} - M_{n})^2 \right] & 
\leq 2 \left( \mathbb{E} [ (M_{\infty} - M_{2^q})^2 ]+ \mathbb{E}\left[ \sup_{n \geq 2^q} (M_{n} - M_{2^q})^2 \right] \right) \\ &
\leq 10 \, \mathbb{E} [ (M_{\infty} - M_{2^q})^2 ] = O(2^{-q}),
\end{align*}
when $q$ goes to infinity.
Hence, 
$$\mathbb{P} \left[ \sup_{n \geq 2^q} |M_{\infty} - M_{n}|  \geq 2^{-q/4} \right] 
\leq 2^{q/2} \, \mathbb{E} \left[ \sup_{n \geq 2^q} (M_{\infty} - M_{n})^2 \right] = O(2^{-q/2}),$$
and by Borel-Cantelli's lemma, 
$$\mathbb{P} \left[ \exists q_0 \geq 1, \, \forall q \geq q_0, \, \forall n \geq 2^{q}, \, |M_{\infty} - M_{n}|  \geq 2^{-q/4} \right] = 1.$$
Therefore, almost surely, 
$$ |M_{\infty} - M_n| = O(n^{-1/4})$$
when $n$ goes to infinity. Now, by \eqref{lll}, one has almost surely:
\begin{align*}
n \theta^{(n)}_k / 2 \pi &  = \frac{1}{2\pi} ( L + O(n^{-\epsilon}) )\exp \left( - M_n \right) 
 \exp \left( \log n - \sum_{p=1}^{n-1}  \frac{1}{p} \right) \\ & = 
\frac{1}{2\pi}  ( L + O(n^{-\epsilon}) ) \exp \left( - M_{\infty} + O(n^{-1/4}) \right) 
  \, \exp  \left( -\gamma + O(n^{-1}) \right) \\ & = x_k + O(n^{- \epsilon}),
  \end{align*}
where  $\gamma$ is Euler constant and $$x_k=
 \frac{L}{2 \pi \, e^{\gamma + M_{\infty} }},$$
if one assumes $\epsilon \leq 1/4$.
Moreover, since one knows
that the point process $(n \theta^{(n)}_k/2 \pi)_{k \in \mathbb{Z}}$ converges weakly to
a determinantal process with sine kernel, the limit point process $(x_k)_{k \in \mathbb{Z}}$ is
necessarily also a determinantal process with sine kernel.
\begin{rem}
This probabilistic proof of almost sure convergence to a sine point process can be extended to Hua-Pickrell measures. This requires showing analogues of equations \eqref{EE1}
 till \eqref{EE4}, hence a precise analysis on the hypergeometric kernel, which is not the purpose of this article.
Proposition \ref{main} is the exact equivalent to Proposition \ref{convtsilevich}, for unitary matrices. The
 link between Proposition \ref{tsilevich} and Proposition \ref{convtsilevich} shows immediately that the
behavior of the large cycles of random permutations is strongly related to the bahaviour of the
 corresponding eigenvalues which are close to $1$.
Similarly, the behavior of the small
cycles of a permutation is directly related to the traces of the small powers of the corresponding
matrix. Now, if for $n, p \geq 1$, $Y^{(n)}_p$ denotes the number of $p$-cycles of a random
permutation on $\mathcal{S}_n$ which follows the Ewens measure of parameter $\theta> 0$, then
for all $p_0 \geq 1$, the joint distribution of $(Y^{(n)}_p)_{1 \leq p \leq p_0}$ tends
to the distribution of $(Y_p)_{1 \leq  p \leq p_0}$, where $(Y_p)_{p \geq 1}$ is a sequence
of independent Poisson random variables such that $\mathbb{E}[Y_p] = \theta/p$.
This result can be easily translated to a result of weak convergence for the finite-dimensional
marginales of the sequence $(\operatorname{Tr} (u^p_n))_{p \geq 1}$, where $u_n$ is a random
permutation matrix of order $n$, which follows the Ewens measure of parameter $\theta$.
For general unitary matrices, one has a similar result on the traces: if
$u_n$ is a random unitary matrix of order $n$ which follows the Haar measure, then
the finite-dimensional marginales of $(\operatorname{Tr} (u^p_n))_{p \geq 1}$ converge
in law to sequences of i.i.d. complex gaussian random variables \cite{DiacShah}. 
\end{rem}

\renewcommand{\refname}{References}

\end{document}